\def\disp{\displaystyle}
\def\Limsup{\mathop{{\rm Lim}\,{\rm sup}}}
\def\tto{\;{\lower 1pt \hbox{$\rightarrow$}}\kern -10pt
\hbox{\raise 2pt \hbox{$\rightarrow$}}\;}
\def\Hat{\widehat}
\def\Tilde{\widetilde}
\def\Bar{\overline}
\def\ra{\rangle}
\def\la{\langle}
\def\ve{\varepsilon}
\def\B{\mathbb{B}}
\def\R{\mathbb{R}}
\def\N{\mathbb{N}}
\def\ox{\bar{x}}
\def\co{\mbox{\rm co}\,}
\def\epi{\mbox{\rm epi}\,}
\def\dom{\mbox{\rm dom}\,}
\def\cl{\mbox{\rm cl}\,}
\def\cone{\mbox{\rm cone}\,}
\def\inter{\mbox{\rm int}\,}
\def\dn{\downarrow}
\def\O{\Omega}
\def\ph{\varphi}
\def\emp{\emptyset}
\def\st{\stackrel}
\def\oR{\Bar{\R}}
\def\N{\mathbb{N}}
\def\lm{\lambda}
\def\dd{\delta}
\def\al{\alpha}
\def\Th{\Theta}
\newcounter{count}
\DeclareMathOperator{\subm}{\partial}
\DeclareMathOperator{\subf}{\hat{\partial}}
\DeclareMathOperator{\ri}{ri}
\newcommand{\Intf}[1]{{E}_{#1}}
\DeclareMathOperator{\sub}{\partial}
\DeclareMathOperator{\1}{\mathds{1}}
\newcommand{\T}{\Omega}
\newcommand{\Rex}{\overline{\mathbb{R}}}
\let\epsilon\varepsilon
\newcommand{\REVPP}[1]{{{#1}}}
\begin{document}

\title{New extremal principles with applications\\to stochastic and semi-infinite programming\thanks{Research of the first author was partly supported by the USA National Science Foundation under grants DMS-1512846 and DMS-1808978, by the USA Air Force Office of Scientific Research under grant \#15RT04, and by the Australian Research Council under Discovery Project DP-190100555. The second author was   partially supported by:  CONICYT  Grants: Fondecyt Regular 1190110 and Fondecyt Regular 1200283 and  Programa Regional Mathamsud 20-Math-08 CODE: MATH190003.}}

\author{Boris S. Mordukhovich \and \mbox{Pedro P\'erez-Aros}}

\institute{Boris S. Mordukhovich \at Department of Mathematics, Wayne State University, Detroit, Michigan 48202, USA\\ \email{boris@math.wayne.edu} \\
\and Pedro P\'erez-Aros \at Instituto de Ciencias de la Ingenier\'ia, Universidad de O'Higgins, Rancagua, Chile\\
\email{pedro.perez@uoh.com}}

\dedication{Dedicated to Marco L\'opez in honor of his 70th birthday, with great respect}

\date{Received: date / Accepted: date}

\maketitle

\begin{abstract}

This paper develops new extremal principles of variational analysis that are motivated by applications to constrained problems of stochastic programming and semi-infinite programming without smoothness and/or convexity assumptions. These extremal principles concern measurable set-valued mappings/multifunctions with values in finite-dimensional spaces and are established in both approximate and exact forms. The obtained principles are instrumental to derive via variational approaches integral representations and upper estimates of regular and limiting normals cones to essential intersections of sets defined by measurable multifunctions, which are in turn crucial for novel applications to stochastic and semi-infinite programming.\vspace*{-0.05in}

\keywords{Variational analysis \and Generalized differentiation \and Normal cone calculus \and Stochastic programming \and Semi-infinite programming}\vspace*{-0.05in}

\subclass{Primary: 49J53, 90C15, 90C34 \and Secondary: 49J52}\vspace*{-0.05in}
\end{abstract}

\section{Introduction}\label{intro}

Variational and extremal principles of modern variational analysis have been widely recognized as fundamental ingredients to deal with theoretical and numerical issues arising in optimization theory and its applications; see, e.g., the books \cite{m06,m18,rw} and the references therein. Despite numerous successful applications of variational principles and techniques to various classes of constrained optimization problems, some important areas are still largely underinvestigated, while advanced methods of variational analysis seem to be very appropriate and promising for required applications. Among such areas we mention broad classes of constrained problems in stochastic and semi-infinite programming. We refer the reader to \cite{gl,sdr} for fundamental aspects of these disciplines and to \cite{clmp1,clmp2,hhp,m18,mn,mn1,mn2,ppa1} for some publications that apply variational analysis and generalized differentiation to problems of such types.\vspace*{0.05in}

In this paper we study optimization problems given in the form
\begin{align}\label{firstproblem}
\begin{array}{c}
\mbox{minimize }\;h(x)\;\mbox{ subject to}\\
x\in M(\omega)\;\text{ for almost all }\;\omega\in\Omega,
\end{array}
\end{align}
where $(\Omega,\mathcal{A},\mu)$ is a $\sigma$-finite measure space, where $M\colon\Omega\tto\R^n$ is a measurable multifunction with closed values, and where $h\colon\R^n\to\Rex:=(-\infty,\infty]$ is a lower semicontinuous (l.s.c.) extended-real-valued function. The framework of \eqref{firstproblem} is quite general and includes---among other classes---robust optimization problems, bilevel programs, and semi-infinite programs with some uncertainties in the data of $M(\omega)$. It is obvious that problem \eqref{firstproblem} can be equivalently written in the unconstrained format:
\begin{equation}\label{formulation2}
\mbox{minimize }\;h(x)+\delta_{M_\cap}(x)\;\mbox{ over }\;x\in\R^n,
\end{equation}
where $M_\cap$ is the {\em essential intersection} of $M$ defined by
\begin{equation}\label{ess-inter}
M_{\cap}:=\big\{x\in\R^n\big|\;x\in M(\omega)\;\text{ for almost all }\;\omega\in\Omega\big\},
\end{equation}
and where $\dd_\Th(x)$ stands for the indicator function of the set $\Th$ that is equal to $0$ if $x\in\Th$ and $\infty$ otherwise. Note that the constrained problem \eqref{formulation2} is intrinsically nonsmooth, even when $h$ is a smooth function. As a rule of thumb, necessary optimality conditions for local minimizers of \eqref{formulation2} are formulated as
$$
0\in\partial h(x)+N(x;M_\cap)
$$
via appropriate subdifferential and normal cone notions under suitable qualification conditions. To proceed efficiently in this direction, we have to select adequate subdifferential and normal cone constructions and to be able to calculate (or at least to estimate from above) the normal cone to sets of type \eqref{ess-inter}. To the best of our knowledge, it has not been done in the literature, except for the cases where $\O$ consists of finitely many or countably many points.

The main goals of this paper are to establish {\em efficient calculus rules} of regular and limiting normal cones (see Section~\ref{notation} for the definitions) to the set $M_\cap$ from \eqref{ess-inter} generated by measurable multifunctions and then to apply the obtained results to deriving necessary optimal conditions in general constrained problems of {\em stochastic} and {\em semi-infinite programming}. These issues happen to be very challenging, and we accomplish our goals by establishing new {\em extremal principles} for measurable multifunctions that are certainly of their independent interest, besides the applications presented below. Our developments in this direction follow the lines of \cite{mp1} (see also \cite{m18}), where the notion of extremality and appropriate versions of the extremal principle were given for countable systems of sets. In the case of finitely many sets, these notions and results reduce to those originated in \cite{km} and then have been extensively developed and applied in variational analysis and optimization; see, e.g., \cite{m06,m18} with comprehensive commentaries and references therein. Note the sequential extremal principle obtained below for measurable multifunctions in new even for systems of countably many sets. The latter corresponds to the setting of \eqref{ess-inter}, where the set $\Omega$ consists of countably many points with the measure $\mu$ being atomic at these points. The case of an arbitrary measure $\mu$ and a $\mu$-measurable multifunction $M$ in \eqref{ess-inter} defined on an arbitrary set $\Omega$ allows us to cover in the framework of \eqref{firstproblem} general problems of stochastic programming, which has never been done before, and also to significantly extend the applications of \cite{mp2} from countable to general constraint systems in nonsmooth and nonconvex semi-infinite programming.\vspace*{-0.05in}

The rest of the paper is organized as follows. In Section~\ref{notation} we present some {\em constructions} and {\em preliminaries} from variational analysis and generalized differentiation that are widely used below.\vspace*{-0.05in}

Section~\ref{EXTREMALFORSETVALUED} is devoted to the introduction and study of new concepts of {\em extremality} for measurable multifunctions and deriving {\em extremal principles} for them. We establish two extremal principles that play crucial roles in deriving the subsequent calculus rules and applications. The first extremal principle addresses general measurable multifunctions with closed values and is expressed in the {\em sequential/approximating form} via regular normals at nearby random points. The second principle concerns measurable {\em cone-valued} multifunctions {\em extremal at the origin} and is given in the {\em exact form}, i.e., it is expressed in terms of the limiting normal cone exactly at the origin in $L^p(\Omega,\R^n)$, $1\le p<\infty$, as the extremal point. The statements of both extremal principles involve integrals over $\Omega$ with respect to the given measure on $\Omega$.\vspace*{-0.07in}

In Section~\ref{SectionIntegralRepresentation} we develop a {\em variational approach}, based on employing the obtained extremal principles and related variational results, to derive integral representations and upper estimates of regular and limiting normals to essential intersections of measurable multifunctions with the main results obtained here for {\em cone-valued} measurable mappings.\vspace*{-0.05in}

The next Section~\ref{chip} extends this approach to evaluating the normal cones to essential intersections \eqref{ess-inter} of {\em arbitrary measurable multifunctions} with closed values in finite-dimensional spaces by involving in addition an appropriate extension of the so-called {\em conical hull intersection property} (CHIP) to the case of measurable multifunctions that is introduced and studied in this section. A typical {\em calculus rule} of this type is given by
\begin{align*}
N(\bar{x};M_\cap)\subset{\rm cl}\Big(\int_{\Omega} N\big(\bar{x};M(\omega)\big)d\mu(\omega)\Big)
\end{align*}
in terms of the closure of the {\em Aumann integral} of set-valued mappings. The obtained calculus results are crucial for the subsequent applications.

Section~\ref{ApplicationSTOCHASTIC} is devoted to applications of the results developed above to general problems of {\em stochastic programming}. First we derive necessary optimality conditions for nonsmooth and nonconvex stochastic programs with random constraints described by measurable set-valued mappings  $M\colon\Omega\tto\R^n$. Then we specify these conditions in the case of stochastic programs with inequality constraints under appropriate constraint qualifications. All the obtained qualification and optimality conditions are expressed in terms of limiting normals and subgradients calculated precisely at the local minimizer in question.

Section~\ref{AplSIP} concerns general problems of {\em semi-infinite programming} with nonsmooth and nonconvex data and index sets given by an arbitrary metric space. Similarly to Section~\ref{ApplicationSTOCHASTIC}, we derive pointwise necessary optimality conditions for such problems considering first programs with set-valued constraints and then specifying the results in the case of infinite inequality systems.\vspace*{-0.2in}

\section{Preliminaries from variational analysis}\label{notation}\vspace*{-0.1in}

In this section we present some preliminaries from variational analysis and generalized differentiation that are broadly used in what follows.
Our notation and terminology are standard; see, e.g., \cite{m18,rw}. Recall that $\mathbb{B}$ stands for the closed unit ball of the finite-dimensional Euclidean space in question, that $\mathbb{B}_r(x):=x+r\mathbb{B}$ for $x\in\R^n$ and $r>0$, and that $\N:=\{1,2,\ldots\}$. Given a nonempty set $\Th\subset\R^n$, we use the symbols $\inter\Th$, $\ri\Th$, $\cl\Th$, $\co\Th$, and $\cone\Th$ to denote the {\em interior, relative interior, closure, convex hull}, and {\em conic hull} of $\Th$, respectively. The symbol $^*$ indicates the {\em duality} correspondence. In particular, $\Th^*:=\{v\in\R^n|\;\la v,x\ra\le 0\;\mbox{ for all }\;x\in\Th\}$, and $A^*$ stands for the matrix transposition (adjoint operator). The {\em distance function} of $\Th$ is denoted by $d_\Th(x):=\inf_{u\in\Th}\|x-u\|$ for all $x\in\R^n$.

Given further a set-valued mapping $F\colon\R^n\tto\R^m$, define the (Painlev\'e-Kuratowski) {\em outer limit} of $F$ as $x\to\ox$ by
\begin{equation*}
\Limsup_{x\to\ox}F(x):=\big\{y\in\R^m\big|\;\exists\,x_k\to\ox,\;y_k\to y\;\mbox{ with }\;y_k\in F(x_k),\;k\in\N\big\}.
\end{equation*}

In this paper we use the following collections of generalized normals to arbitrary sets. The (Fr\'echet) {\em regular normal cone} to $\Th$ at $\ox\in\Th$ is defined by
\begin{equation}\label{rn}
\Hat N(\ox;\Th):=\Big\{x^*\in\R^n\Big|\;\limsup_{x\st{\Th}{\to}\ox}\frac{\la x^*,x-\ox\ra}{\|x-\ox\|}\le 0\Big\},
\end{equation}
where the symbol $x\st{\Th}{\to}\ox$ means that $x\to\ox$ with $x\in\Th$. The (Mordukhovich) {\em basic/limiting normal cone} to $\Th$ at $\ox\in\Th$ is defined by
\begin{equation}\label{ln}
N(\ox;\Th):=\Limsup_{x\st{\Th}{\to}\ox}\Hat N(x;\Th).
\end{equation}
Recall the well-known duality relation $\Hat N(\ox;\Th)=T^*(\ox;\Th)$ between \eqref{rn} and the (Bouligand-Severi) {\em tangent/contingent cone} to $\Th$ at $\ox$ given by
\begin{equation*}%\label{tan}
T(\bar{x};\Th):=\Limsup\limits_{\tau\downarrow 0}\frac{\Th-\bar{x}}{\tau}.
\end{equation*}
Note that, due to its nonconvexity, the limiting normal cone \eqref{ln} cannot be dual to any tangential approximation of $\Th$ at $\ox$. Nevertheless, the normal cone \eqref{ln} and the associated subdifferential and coderivative constructions for functions and mappings enjoy comprehensive calculus rules based on variational/extremal principles of variational analysis; see \cite{m06,m18,rw}. The set $\Th$ is called {\em normally regular} at $\ox\in\Th$ if $\Hat N(\ox;\Th)=N(\ox;\Th)$.

Let $f\colon\R^n\to\oR$ be an extended-real-valued function with the {\em domain} $\dom f:=\{x\in\R^n|\;f(x)<\infty\}$ and the {\em epigraph} $\epi f:=\{(x,\al)\in\R^{n+1}|\;\al\ge f(x)\}$. The (Fr\'echet) {\em regular subdifferential} of $f$ at $\ox\in\dom f$ is given by
\begin{equation}\label{fs}
\Hat\partial f(\ox):=\big\{x^*\in\R^n\big|\;(x^*,-1)\in\Hat N\big((\ox,f(\ox));\epi f\big)\big\}.
\end{equation}
Using now the limiting normal cone \eqref{ln}, we define the limiting subdifferential constructions known as the (Mordukhovich) {\em basic subdifferential} and {\em singular subdifferential} of $f$ at $\ox\in\dom f$, respectively:
\begin{align}
\partial f(\ox):=\big\{x^*\in\R^n\big|\;(x^*,-1)\in N\big((\ox,f(\ox));\epi f\big)\big\},\label{bs}\\
\partial^\infty f(\ox):=\big\{x^*\in\R^n\big|\;(x^*,0)\in N\big((\ox,f(\ox));\epi f\big)\big\}.\label{ss}
\end{align}
The construction $\Hat\partial^\infty f(\ox)$ is defined similarly to \eqref{ss} by using $\Hat N$ therein.\vspace*{0.02in}

If $f$ is convex, then \eqref{fs} and \eqref{bs} reduce to the subdifferential of convex analysis. If $f$ is l.s.c.\ around $\ox$, then the condition $\partial^\infty f(\ox)=\{0\}$ fully characterizes the local {\em Lipschitz continuity} of $f$ around this point. We refer the reader to the books \cite{m06,m18,rw} and the bibliographies therein for various results and applications of the subdifferential constructions \eqref{fs}--\eqref{ss} including {\em full calculi} for the limiting ones \eqref{bs} and \eqref{ss}.\vspace*{0.02in}

Next we consider a complete $\sigma$-finite measure space $({\Omega},\mathcal{A},\mu)$ with $\mu(\Omega )>0$. For any $p\in[1,\infty]$, denote by $\|\cdot\|_p$ the norm of the classical Lebesgue space $L^p(\Omega,\mathbb{R}^n)$. A set-valued mapping $M\colon{\Omega}\tto\R^n$ is said to be {\em measurable} if for every open set $U\subset\mathbb{R}^n$ the inverse image
$M^{-1}(U)$ is measurable, i.e., $M^{-1}(U)\in\mathcal{A}$. The {\em essential intersection} $M_{\cap}$ of $M$ was defined in \eqref{ess-inter}. Recall also that the (Aumann) {\em integral} of $M:{\T}\tto\R^n$ over $A\in\mathcal{A}$ is given by
\begin{equation*}
\int_A M(\omega)d\mu(\omega):=\left\{\int_A x^*(\omega)d\mu(\omega)\Bigg|x^*\in{L}^1({\Omega},\R^n)\textnormal{ and }x^*(\omega)\in M(\omega)\text{  a.e.}\right\}.
\end{equation*}

Let us now formulate two known results on subdifferentiation of integral functionals needed in  what follows. The first result is classical in convex analysis of integral functionals; see, e.g., \cite[Chapter~14]{rw}. A mapping $f\colon\Omega\times\mathbb{R}^n\to\Rex$ is called a {\em normal integrand} if it is $\mathcal{A}\otimes\mathcal{B}(\mathbb{R}^n)$-measurable (where $\mathcal{B}(\mathbb{R}^n)$ is the \emph{Borel $\sigma$-algebra}, i.e., the $\sigma$-algebra generated by all open sets of $\mathbb{R}^n$), and if $f_\omega:=f(\omega,\cdot)$ is l.s.c.\ for every $\omega\in{\Omega}$. If in addition $f_\omega$ is convex for all $\omega\in\Omega$, then it is said to be a {\em convex normal integrand}.\vspace{-0.05in}

\begin{proposition}[generalized Leibniz rule for convex integrals]\label{propositiondifconvinte} Given a convex normal integrand $f\colon\Omega\times\mathbb{R}^n\to\oR$, define the integral $\Intf{f}(x):=\int_\Omega f(\omega,x)d\mu(\omega)$. If $\ox$ is a point where $\Intf{f}$ is continuous, then we have
\begin{align}\label{eq:subIntegral}
\sub\Intf{f}(\ox)=\int_\Omega\sub f_\omega(\ox)d\mu(\omega).
\end{align}
Hence $\Intf{f}$ is differentiable at $\ox$ if the right-hand side of \eqref{eq:subIntegral} is a singleton.
\end{proposition}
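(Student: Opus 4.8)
The plan is to prove the two inclusions of \eqref{eq:subIntegral} separately: the inclusion $\supseteq$ is elementary, while $\subseteq$ rests on the interchange of conjugation and integration for convex normal integrands. I would begin by noting that continuity of $\Intf{f}$ at $\ox$ means $\Intf{f}$ is finite on a neighborhood of $\ox$, which forces $f_\omega$ to be finite near $\ox$ for a.e.\ $\omega$; hence $\partial f_\omega(\ox)$ is a nonempty compact convex set and, by the normal-integrand calculus of \cite[Chapter~14]{rw}, the multifunction $\omega\mapsto\partial f_\omega(\ox)$ is measurable, so measurable-selection theory guarantees it has integrable selections and the right-hand side of \eqref{eq:subIntegral} is a well-defined Aumann integral. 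For $\supseteq$, I would pick any integrable $v$ with $v(\omega)\in\partial f_\omega(\ox)$ a.e., write the subgradient inequality $f_\omega(x)\ge f_\omega(\ox)+\la v(\omega),x-\ox\ra$ pointwise, and integrate over $\Omega$ to obtain $\Intf{f}(x)\ge\Intf{f}(\ox)+\la\int_\Omega v\,d\mu,x-\ox\ra$ for all $x\in\R^n$, i.e., $\int_\Omega v\,d\mu\in\partial\Intf{f}(\ox)$.

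For the reverse inclusion I would start from $x^*\in\partial\Intf{f}(\ox)$, equivalently (by convexity) the Fenchel--Young equality $\Intf{f}(\ox)+(\Intf{f})^*(x^*)=\la x^*,\ox\ra$, and invoke the key formula
\begin{equation*}
(\Intf{f})^*(x^*)=\min\Big\{\int_\Omega f_\omega^*\big(v(\omega)\big)\,d\mu(\omega)\ \Big|\ v\in L^1(\Omega,\R^n),\ \int_\Omega v\,d\mu=x^*\Big\},
\end{equation*}
with the minimum attained. The inequality $\le$ here is immediate from the definition of the conjugate together with $\Intf{f}=\int_\Omega f_\omega\,d\mu$ (and $\la x^*,x\ra=\int_\Omega\la v(\omega),x\ra\,d\mu$ for constant $x$); the reverse inequality and attainment are the substantive content. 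Granting it, let $\bar v\in L^1(\Omega,\R^n)$ attain the minimum; substituting into the Fenchel--Young equality and using $\la x^*,\ox\ra=\int_\Omega\la\bar v(\omega),\ox\ra\,d\mu$ gives
\begin{equation*}
\int_\Omega\Big(f_\omega(\ox)+f_\omega^*\big(\bar v(\omega)\big)-\la\bar v(\omega),\ox\ra\Big)\,d\mu(\omega)=0.
\end{equation*}
Since each integrand is nonnegative by the Fenchel--Young inequality, it vanishes for a.e.\ $\omega$, which means precisely $\bar v(\omega)\in\partial f_\omega(\ox)$ a.e., whence $x^*=\int_\Omega\bar v\,d\mu\in\int_\Omega\partial f_\omega(\ox)\,d\mu(\omega)$. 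The final assertion is then immediate, since a convex function is differentiable at a point exactly when its subdifferential there is a singleton.

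The hard part will be the interchange formula for $(\Intf{f})^*$, i.e., that the pointwise optimal choice of $v$ can be realized by a single integrable selection with integrable conjugate values. The route I would take is to write $\Intf{f}=I_f\circ\iota$, where $\iota\colon\R^n\to L^p(\Omega,\R^n)$ is the constant embedding $\iota(x)(\omega)\equiv x$, with adjoint $\iota^*(v)=\int_\Omega v\,d\mu$, and $I_f(u):=\int_\Omega f_\omega(u(\omega))\,d\mu$ is the usual integral functional on $L^p$; one then combines Rockafellar's theorem $\partial I_f(u)=\{v\in L^q\mid v(\omega)\in\partial f_\omega(u(\omega))\ \text{a.e.}\}$ with the convex subdifferential chain rule $\partial(I_f\circ\iota)(\ox)=\iota^*\partial I_f(\iota\ox)$. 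The delicate point is the qualification condition for the chain rule, namely upgrading the scalar continuity of $\Intf{f}$ at $\ox$ to continuity of $I_f$ at the constant function $\iota\ox$: this follows from a domination estimate in which convexity of each $f_\omega$ bounds it on a ball around $\ox$ (from above via its values at the vertices of a simplex containing the ball, from below via the midpoint inequality), with finiteness of $\Intf{f}$ near $\ox$ making those bounds integrable. When $\mu(\Omega)=\infty$ one works with $p=\infty$ (the singular part of the dual not contributing because $I_f$ is continuous) or first localizes to finite-measure pieces using $\sigma$-finiteness; all of these ingredients are classical and can be assembled from \cite[Chapter~14]{rw}.
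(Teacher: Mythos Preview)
The paper does not give its own proof of this proposition: it records the result as classical and simply refers the reader to \cite[Chapter~14]{rw}. So there is no in-paper argument to compare your proposal against.

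That said, your sketch is the standard route to this identity and is essentially what one finds in the literature (Rockafellar's interchange theorems, or equivalently \cite[Theorem~14.60]{rw} combined with the chain rule through the constant embedding $\iota$). The $\supseteq$ inclusion by integrating the subgradient inequality is correct and immediate; the $\subseteq$ inclusion via Fenchel--Young and the conjugate-interchange formula is the right idea, and you correctly flag that the only nontrivial step is the qualification condition---lifting continuity of $\Intf{f}$ at $\ox$ to continuity of $I_f$ at the constant function $\iota\ox$---together with the $\sigma$-finite/$p=\infty$ bookkeeping when $\mu(\Omega)=\infty$. Nothing in your outline is wrong, and the ingredients you cite do assemble into a complete proof; since the paper treats the proposition as a quotable fact, your level of detail already exceeds what the paper provides.
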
\vspace*{-0.05in}

The second result has been recently established in \cite{chp}; it provides a sequential evaluation of regular subgradients of integral functionals involving nonconvex normal integrands.\vspace*{-0.05in}

\begin{proposition}[sequential subdifferentiation of nonconvex integral functionals]\label{theoremsubdiferential} Let $\mu$ be a finite measure on $\O$, and let $f\colon\Omega\times\mathbb{R}^n\to[0,\infty]$ be a normal integrand. Take $p,q\in[1,\infty]$ with $1/p+1/q=1$. Then for every  $x^*\in\Hat{\sub}\Intf{f}(\ox)$ with $\ox\in\dom\Intf{f}$ there exist sequences of elements $y_k\in\mathbb{R}^n$, $x_k\in{L}^p(\Omega,\mathbb{R}^n )$, and $x_k^*\in{L}^q(\Omega,\mathbb{R}^n)$ as $k\to\infty$ such that:\\[1ex]
{\bf(i)} $x_k^*(\omega)\in\Hat{\sub}f(\omega,x_k(\omega))$ a.e., $\|\ox-y_k\|\to 0$, $\|\ox-x_k(\cdot)\|_p\to 0$;\\[1ex]
{\bf(ii)} $\displaystyle\int_{\Omega}\|x_k^*(\omega)\|\cdot\|x_k(\omega)-y_k\|d\mu(\omega)\to 0$, $\displaystyle\int_{\O}\langle x_k^*(\omega),x_k(\omega)-\ox\rangle d\mu(\omega)\to 0$;\\[1ex]
{\bf(iii)}$\displaystyle\int_{\O}x_k^*(\omega)d\mu(\omega)\to x^*$, $\displaystyle\int_{\O}|f(\omega,x_k(\omega))-f(\omega,\ox)|d\mu(\omega)\to 0$.
\end{proposition}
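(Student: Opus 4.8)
The plan is to combine four tools: the smooth variational description of regular subgradients, a penalization coupling an auxiliary $\R^n$-variable to the integration variable, Ekeland's variational principle in $L^1(\O,\R^n)$, and measurable selection. Since $f\ge0$ is a normal integrand, Fatou's lemma makes $\Intf{f}$ lower semicontinuous on $\R^n$, so by the smooth variational description of $\Hat{\sub}$ (see, e.g., \cite{m06,rw}) there is $\psi\in C^1(\R^n)$ --- which, after a smooth cutoff, may be taken bounded --- with $\psi\le\Intf{f}$ near $\ox$, $\psi(\ox)=\Intf{f}(\ox)$, and $\nabla\psi(\ox)=x^*$. For $\epsilon>0$ I would work on $L^1(\O,\R^n)\times\R^n$ with
\[
\Psi_\epsilon(x(\cdot),y):=\int_\O f(\omega,x(\omega))\,d\mu(\omega)+\frac1\epsilon\int_\O\|x(\omega)-y\|\,d\mu(\omega)-\psi(y),
\]
restricted to $\{x\colon\|x(\omega)-\ox\|\le R\ \text{a.e.}\}\times\{y\colon\|y-\ox\|\le R\}$; the pointwise bound $R$ keeps $x\in L^\infty\subset L^p$ and serves only to upgrade later $L^1$-convergence to $L^p$-convergence, being inactive as $\epsilon\downarrow0$ (I take $1\le p<\infty$; the case $p=\infty$ can be treated by working directly in $L^\infty$, where Ekeland already gives $L^\infty$-closeness).

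The point of this functional is that, by Rockafellar's interchange of infimum and integral \cite{rw}, $\inf_{x(\cdot)}\Psi_\epsilon(x(\cdot),y)\ge\Intf{e_\epsilon f}(y)-\psi(y)$, where the ``linear Moreau envelope'' $e_\epsilon f_\omega(y):=\inf_z\{f(\omega,z)+\tfrac1\epsilon\|z-y\|\}$ is $\tfrac1\epsilon$-Lipschitz in $y$ and increases monotonically to $f_\omega$ as $\epsilon\downarrow0$. A Dini-type argument for this monotone convergence of continuous functions to the l.s.c.\ function $\Intf{f}-\psi\ge0$ then yields $\inf\Psi_\epsilon\ge-\eta_\epsilon$ (near $(\ox,\ox)$) with $\eta_\epsilon\downarrow0$; since $\Psi_\epsilon(\ox,\ox)=\Intf{f}(\ox)-\psi(\ox)=0$ (identifying $\ox$ with the constant map), the pair $(\ox,\ox)$ is an $\eta_\epsilon$-approximate minimizer of $\Psi_\epsilon$. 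Because $\Psi_\epsilon$ is l.s.c.\ on the complete space $L^1(\O,\R^n)\times\R^n$ (Fatou), Ekeland's principle with parameter $\sqrt{\eta_\epsilon}$ then produces $(x_\epsilon(\cdot),y_\epsilon)$ with $\|x_\epsilon-\ox\|_1\to0$, $y_\epsilon\to\ox$, $\Psi_\epsilon(x_\epsilon,y_\epsilon)\le0$, and such that $(x_\epsilon,y_\epsilon)$ locally minimizes $\Psi_\epsilon+\sqrt{\eta_\epsilon}\big(\|\cdot-x_\epsilon\|_1+\|\cdot-y_\epsilon\|\big)$.

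Minimizing over $y$ alone (with $x=x_\epsilon$ fixed), the convex generalized Leibniz rule of Proposition~\ref{propositiondifconvinte}, applied to the Lipschitz functional $y\mapsto\tfrac1\epsilon\int_\O\|x_\epsilon(\omega)-y\|\,d\mu$ together with $\psi\in C^1$, gives a measurable $w_\epsilon$ with $\|w_\epsilon(\omega)\|\le1$, $w_\epsilon(\omega)\in\sub\big(\|x_\epsilon(\omega)-\cdot\|\big)(y_\epsilon)$ a.e., and $\tfrac1\epsilon\int_\O w_\epsilon\,d\mu=\nabla\psi(y_\epsilon)+b_\epsilon$ with $\|b_\epsilon\|\le\sqrt{\eta_\epsilon}$, hence $\tfrac1\epsilon\int_\O w_\epsilon\,d\mu\to x^*$. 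Minimizing over $x(\cdot)$ alone (with $y=y_\epsilon$ fixed) is the crux: working in $L^1$ makes the coupling term and the Ekeland perturbation honest integral functionals, so one can localize to sets of arbitrarily small measure and combine a measurable selection theorem for the normal integrand $\Hat{\sub}f$ with the classical density of points of regular subdifferentiability of l.s.c.\ functions \cite{rw} to arrive at a measurable $x_\epsilon(\cdot)$ (a measure-small modification of the Ekeland point, negligible in $L^1$) for which $\Hat{\sub}f(\omega,x_\epsilon(\omega))\ne\emptyset$ and $\tfrac1\epsilon w_\epsilon(\omega)\in\Hat{\sub}f(\omega,x_\epsilon(\omega))$ a.e. --- using, where $x_\epsilon(\omega)\ne y_\epsilon$, that $\|\cdot-y_\epsilon\|$ is differentiable at $x_\epsilon(\omega)$. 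I expect this disintegration --- transferring the variational optimality of $x_\epsilon$ for an $L^1$-integral functional into a pointwise subdifferential inclusion while staying at points of regular subdifferentiability --- to be the main obstacle, and the slack it leaves is exactly what the integral error terms in \textbf{(ii)} record.

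It then remains to set $x_k:=x_{\epsilon_k}$, $y_k:=y_{\epsilon_k}$, $x_k^*:=\tfrac1{\epsilon_k}w_{\epsilon_k}$ along $\epsilon_k\downarrow0$ and verify \textbf{(i)}--\textbf{(iii)}. The membership in \textbf{(i)} and $\|\ox-y_k\|\to0$ are in hand, and $\|\ox-x_k(\cdot)\|_p\to0$ follows from $\|x_k-\ox\|_1\to0$, the bound $\|x_k(\omega)-\ox\|\le R$, and dominated convergence along an a.e.-convergent subsequence. From $\Psi_{\epsilon_k}(x_k,y_k)\le0$ and $f\ge0$ one gets $\int_\O f(\omega,x_k(\omega))\,d\mu\le\psi(y_k)\to\Intf{f}(\ox)$, and Fatou supplies the reverse liminf, so $\int_\O f(\omega,x_k(\omega))\,d\mu\to\Intf{f}(\ox)$ and hence $\tfrac1{\epsilon_k}\int_\O\|x_k(\omega)-y_k\|\,d\mu\to0$; since $\|x_k^*(\omega)\|\le\tfrac1{\epsilon_k}$, this gives the first limit of \textbf{(ii)}, and writing $x_k(\omega)-\ox=(x_k(\omega)-y_k)+(y_k-\ox)$ and using $\int_\O x_k^*(\omega)\,d\mu=\nabla\psi(y_k)+b_{\epsilon_k}\to x^*$ yields the second limit of \textbf{(ii)} and the first of \textbf{(iii)}. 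Finally $\int_\O|f(\omega,x_k(\omega))-f(\omega,\ox)|\,d\mu\to0$ follows from $\int_\O f(\omega,x_k(\omega))\,d\mu\to\Intf{f}(\ox)$ via Scheffé's lemma, as $\min\{f(\omega,x_k(\omega)),f(\omega,\ox)\}\to f(\omega,\ox)$ a.e.\ and is dominated by $f(\cdot,\ox)\in L^1$.
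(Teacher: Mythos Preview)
The paper does not prove this proposition. It is stated in Section~\ref{notation} as a preliminary result and is attributed, without proof, to \cite{chp} (Correa, Hantoute, P\'erez-Aros, \emph{SIAM J.\ Control Optim.}\ 58 (2020), 462--484). So there is no ``paper's own proof'' against which to compare your attempt.

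That said, your outline is broadly in the spirit of the argument in \cite{chp}: a smooth variational description of $x^*\in\Hat\partial\Intf{f}(\ox)$, a penalized decoupling between an $\R^n$-variable and an $L^p$-variable, Ekeland's principle on the product space, and then extraction of the sequences via Fermat/Leibniz and measurable selection. The part you flag as ``the crux'' --- passing from Ekeland optimality of $x_\epsilon(\cdot)$ for an integral functional to the pointwise inclusion $x_k^*(\omega)\in\Hat\partial f(\omega,x_k(\omega))$ a.e.\ --- is indeed the whole difficulty, and your sketch does not actually carry it out: the sentence ``localize to sets of arbitrarily small measure and combine a measurable selection theorem \ldots\ with the density of points of regular subdifferentiability'' hides the real work. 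In particular, the $w_\epsilon$ you obtain from the $y$-minimization is a selection of $\partial\big(\|x_\epsilon(\omega)-\cdot\|\big)(y_\epsilon)$, and there is no automatic reason it coincides with the selection coming from the $x$-minimization at points where $x_\epsilon(\omega)=y_\epsilon$; nor is it clear that a ``measure-small modification'' of $x_\epsilon$ can simultaneously (a) preserve the Ekeland estimates, (b) land in $\dom\Hat\partial f_\omega$, and (c) keep $\tfrac1\epsilon w_\epsilon(\omega)$ inside $\Hat\partial f(\omega,x_\epsilon(\omega))$ exactly rather than approximately. In \cite{chp} this step is handled with some care via interchange results and an explicit fuzzy-sum argument, not by an ad hoc modification. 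The remaining verifications you give for \textbf{(i)}--\textbf{(iii)} (dominated convergence to upgrade $L^1$ to $L^p$, the $\Psi_{\epsilon_k}\le 0$ bookkeeping, Scheff\'e's lemma) are fine once the pointwise inclusion is in hand.
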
\vspace*{-0.05in}

The final result of this section provides simple subdifferential relations concerning the \REVPP{distance functions to cones}.\vspace*{-0.1in}

\begin{proposition}[subdifferentiation of distance functions for cones]\label{inclusionpostivehomogeneous} Let $K\subset\R^n$ be a closed cone. Then we have the inclusions
\begin{equation*}
\Hat\partial d_K(\ox)\subset\partial d_K(0)\;\mbox{ for all }\;\ox\in\R^n\;\mbox{ and }\;\Hat N(\ox;K)\subset N(0;K)\;\mbox{ for all }\;\ox\in K.
\end{equation*}
\end{proposition}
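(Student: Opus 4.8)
The plan is to use the one structural hypothesis available, namely that $K$ is a \emph{cone}, which forces $d_K$ to be positively homogeneous of degree one, and then to run the standard ``rescale and pass to the limit'' argument that transports regular normals and regular subgradients at a point toward the origin. The first step is to record the elementary identity $d_K(\lambda x)=\lambda\,d_K(x)$ for all $\lambda>0$ and $x\in\R^n$, together with $d_K(0)=0$: this follows immediately from the substitution $u\mapsto u/\lambda$ in the infimum defining $d_K$, since $u\in K\iff u/\lambda\in K$. I would also note that $d_K$ is globally $1$-Lipschitz, hence continuous on $\R^n$; this continuity is what will let me keep track of the epigraph when limits are taken.

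For the first inclusion I would fix $x^*\in\Hat\partial d_K(\ox)$ and use the little-$\oo$ description of regular subgradients, $d_K(\ox+h)\ge d_K(\ox)+\la x^*,h\ra+\oo(\|h\|)$ as $h\to 0$. Replacing $h$ by $h/t$, multiplying through by a fixed $t>0$, and invoking positive homogeneity turns this into $d_K(t\ox+h)\ge d_K(t\ox)+\la x^*,h\ra+\oo(\|h\|)$ (the factor $t$ being a fixed positive constant makes $t\,\oo(\|h/t\|)=\oo(\|h\|)$ legitimate), i.e.\ $x^*\in\Hat\partial d_K(t\ox)$ for every $t>0$. Letting $t\dn 0$, the points $(t\ox,d_K(t\ox))$ lie in $\epi d_K$ and converge to $(0,d_K(0))$ by continuity, so $(x^*,-1)\in\Hat N\big((t\ox,d_K(t\ox));\epi d_K\big)$ along this sequence, and the definition \eqref{ln} of the limiting normal cone together with \eqref{bs} yields $x^*\in\partial d_K(0)$.

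For the second inclusion I would argue in the same spirit: given $\ox\in K$ and $x^*\in\Hat N(\ox;K)$, I would write an arbitrary $x\in K$ near $t\ox$ as $x=ty$ with $y:=x/t\in K$ near $\ox$ (legitimate since $K$ is a cone, so also $t\ox\in K$), which converts the defining inequality $\la x^*,x-\ox\ra\le\oo(\|x-\ox\|)$ into $\la x^*,x-t\ox\ra=t\la x^*,y-\ox\ra\le\oo(\|x-t\ox\|)$, whence $x^*\in\Hat N(t\ox;K)$ for every $t>0$; letting $t\dn 0$ and invoking \eqref{ln} gives $x^*\in N(0;K)$. Alternatively, once the first inclusion is in hand one could deduce the second from the known identity $\Hat\partial d_K(\ox)=\Hat N(\ox;K)\cap\B$ for $\ox\in K$ and the conicity of $N(0;K)$, but the direct rescaling is shorter and self-contained.

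I do not anticipate a genuine obstacle: the content is entirely the cone structure plus the definitions. The only points demanding a little care are the bookkeeping of the $\oo(\cdot)$-terms under rescaling and the correct invocation of the outer-limit definitions of $\partial d_K(0)$ and $N(0;K)$ — specifically, checking that the approximating points really do belong to $\epi d_K$, respectively to $K$, and converge to the relevant base point, which is exactly where continuity of $d_K$ and the membership $t\ox\in K$ are used.
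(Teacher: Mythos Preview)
Your proposal is correct and follows essentially the same route as the paper: both arguments exploit the positive homogeneity of $d_K$ to show that $x^*\in\Hat\partial d_K(\ox)$ implies $x^*\in\Hat\partial d_K(s\ox)$ for every $s>0$, and then pass to the limit $s\dn 0$ to land in $\partial d_K(0)$. The only cosmetic difference is in the second inclusion: the paper deduces it from the first via the known relations between subdifferentials of the distance function and normal cones (citing \cite[Corollary~1.96 and Theorem~1.97]{m06}), whereas you give the direct rescaling argument on $K$ itself first and mention that reduction only as an alternative---both are valid and amount to the same idea.
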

\begin{proof} $\:$
Picking $x^*\in\Hat\partial d_K(\ox)$ gives us
\begin{equation*}
\liminf_{x\to\ox}\frac{d_K(x)-d_K(\ox)-\langle x^*,x-\ox\rangle}{\|x-\ox\|}\ge 0.
\end{equation*}
Since the mapping $x\mapsto d_K(x)$ is positive homogeneous, for every $s>0$ we get
\begin{equation*}
\frac{d_K(sx)-d_K(s\ox)-\langle x^*,sx-s\ox\rangle}{\|sx-s\ox\|}= \frac{d_K(x)-d_K(\ox)-\langle x^*,x-\ox\rangle}{\|x-\ox\|},
\end{equation*}
which implies by denoting $\tilde{x}:=sx$ the following inequality:
\begin{equation*}
\liminf_{\tilde{x}\to s\ox}\frac{d_K(\tilde{x})-d_K(s\ox)-\langle x^*,\tilde{x}-s\ox\rangle}{\|\tilde{x}-s\ox\|}\ge 0
\end{equation*}
that ensures in turn that $x^*\in\subf d_K(s\ox)$. By passing to the limit $s\to 0$, we readily arrive at $x^*\in\subm d_K(0)$.
	
The second claimed inclusion follows from the relationships between the regular and limiting subdifferentials of the distance function and the corresponding normal cones; see, e.g., \cite[Corollary~1.96 and Theorem~1.97]{m06}.
\end{proof}\vspace*{-0.2in}

\section{Extremal principles for measurable set-valued mappings}\label{EXTREMALFORSETVALUED}\vspace*{-0.1in}

The concept of {\em extremality} for {\em finitely} many sets and the {\em extremal principle} for them were first formulated by Kruger and Mordukhovich \cite{km}; see also \cite{m94} where this notion was coined and \cite{m06,m18} for further developments, references, and applications. Such an extremal principle formulated via the limiting normal cone \eqref{ln} can be viewed as a far-going {\em variational} counterpart of the classical separation theorem in the case of {\em nonconvex} sets. Various extensions of this extremal principle to {\em countably} many sets can be found in \cite{kl,m18,mp1,mp2}.\vspace*{0.05in}

Following the line of \cite{mp1}, we introduce a new notion of extremality for measurable mappings and obtain an extremal principle for this notion.\vspace*{-0.07in}

\begin{definition}[local extremality for set-valued mappings]\label{local_extremal_at}
Consider a measure space $(\Omega,\mathcal{A},\mu)$ and a measurable set-valued mapping $M\colon\Omega\tto\mathbb{R}^n$, and let $M_\cap$ be taken from \eqref{ess-inter}. Then $M(\cdot)$ is said to be \emph{locally extremal} at $\ox\in M_\cap$ in $L^p(\Omega,\R^n)$ with some $p\in(1,\infty)$ if there exists a sequence of $a_k\in L^p(\Omega,\R^n)$ with $\|a_ k(\cdot)\|_p\to 0$ as $k\to\infty$ and an (open) neighborhood $U$ of $\ox$ such that for all $k\in\N$ we have
\begin{equation}\label{ext}
\bigcap_{\omega\in\Omega \text{ a.e. }}\big(M(\omega)-a_k(\omega)\big)\cap U=\emp,
\end{equation}
where the notation in the left-hand side of \eqref{ext} means that
\begin{equation*}
\bigcap_{\omega\in\Omega \text{ a.e.} }\big(M(\omega)-a_k(\omega)\big)\cap U:=\big\{x\in U\big|\;x\in M(\omega)-a_k(\omega)\text{ a.e.} \big\}.
\end{equation*}
\end{definition}

The crucial result of this paper establishes necessary conditions for extremality in the sense of Definition~\ref{local_extremal_at}. This novel extremal principle for measurable multifunctions is basic for the subsequent applications to generalized differential calculus of integral functionals derived via a variational approach, as well as to necessary conditions for general constrained problems of stochastic and semi-infinite programming. It is expressed in terms of sequences and involves regular normals to the values of the given set-valued mapping $M(\cdot)$. Note that the extremal principle of the following theorem is new even for the case of countably many sets considered in \cite{m18,mp1}, where this result was not established. In the case of finitely many sets, the obtained sequential extremal principle can be equivalently reduced to the exact one given in \cite{km,m06,m18}.\vspace*{-0.07in}

\begin{theorem}[sequential extremal principle]\label{fuzzy_extremal_prin} Let $M\colon\Omega\tto\mathbb{R}^n$ be a closed-valued measurable multifunction with respect to a finite measure $\mu$. Assume that $M$ is locally extremal at $\ox$ in $L^p(\Omega,\R^n)$ with some $p\in(1,\infty)$, and that the following {\sc nonoverlapping condition} holds at $\ox$: there exists neighborhood $U$ around $\ox$ such that
\begin{equation}\label{nonoverlapping}
\bigcap_{\omega\in\Omega \text{ a.e.}}M(\omega)\cap U=\{\ox\}.
\end{equation}
Then we get the {\sc sequential extremal principle} in $L^p(\Omega,\R^n)$ meaning that there exist sequences of $x_k^*\in L^q(\Omega,\R^n)$ and $x_k\in L^p(\Omega,\R^n)$ satisfying the conditions $x_k^*(\omega)\in\Hat N(x_k(\omega);M(\omega))$ a.e., $\|x_k(\cdot)-\ox\|_p\to 0$ as $k\to\infty$,
\begin{equation*}%\label{seq-ep}
\int_\Omega x_k^*(\omega)d\mu(\omega)=0,\;\text{ and }\;\|x_k^*\|_q=1\;\mbox{ for all }\;k\in\N,
\end{equation*}
where $\frac{1}{p}+\frac{1}{q}=1$. Furthermore, \REVPP{we can find $\epsilon_k \to0^+$ such that
\begin{equation}\label{est}
\|x_k(\omega)-\ox\|\le 2\|a_k(\omega)\| + \epsilon_k\;\text{ a.e. }\;\omega\in\Omega,\;k\in\N.
\end{equation}}
\end{theorem}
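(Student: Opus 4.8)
The plan is to reduce the infinite-dimensional extremality condition to a subdifferential problem for a single integral functional on $L^p(\Omega,\R^n)$, apply Ekeland's variational principle there, and then convert the resulting approximate stationary point into pointwise regular-normal information via Proposition~\ref{theoremsubdiferential}. Concretely, I would encode the essential intersection by the integrand $f(\omega,x):=d_{M(\omega)}(x)$, which is a normal integrand taking values in $[0,\infty)$ because $M$ is closed-valued and measurable, and consider the integral functional $\Intf{f}(x)=\int_\Omega d_{M(\omega)}(x)\,d\mu(\omega)$ on $\R^n$ together with its $L^p$-version $x(\cdot)\mapsto\int_\Omega d_{M(\omega)}(x(\omega))\,d\mu(\omega)$. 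The nonoverlapping condition \eqref{nonoverlapping} says $\Intf{f}$ has its unique zero on $U$ at $\ox$; the extremality condition \eqref{ext} says that after the shift by $a_k$ there is no common point in $U$, i.e.\ the shifted integral functional $\int_\Omega d_{M(\omega)-a_k(\omega)}(\cdot)$ stays bounded away from $0$ on... no — more usefully, it means $\Intf{f_k}(x):=\int_\Omega d_{M(\omega)}(x+a_k(\omega))\,d\mu(\omega)$ satisfies $\Intf{f_k}(x)>0$ for all $x\in U$ while $\Intf{f_k}(\ox)\le\|a_k\|_1\to 0$ (by $\|a_k\|_p\to 0$ and finiteness of $\mu$), so $\ox$ is an $\|a_k\|_1$-approximate minimizer of the l.s.c.\ nonnegative functional $\Intf{f_k}$.

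Next I would apply Ekeland's variational principle to $\Intf{f_k}$ on $\R^n$ (or on the $L^p$-space of constant-like perturbations — here working on $\R^n$ suffices since $\Intf{f_k}$ is a function of a single $\R^n$-variable): there is $\hat x_k$ with $\|\hat x_k-\ox\|\le\sqrt{\|a_k\|_1}\to 0$, hence $\hat x_k\in U$ eventually, and $0\in\Hat\partial\big(\Intf{f_k}+\sqrt{\|a_k\|_1}\,\|\cdot-\hat x_k\|\big)(\hat x_k)$, which by the regular subdifferential sum rule gives some $x^*_k\in\Hat\partial\Intf{f_k}(\hat x_k)$ with $\|x^*_k\|\le\sqrt{\|a_k\|_1}$. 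Now I would invoke Proposition~\ref{theoremsubdiferential} applied to the integrand $(\omega,x)\mapsto d_{M(\omega)}(x+a_k(\omega))$ at the point $\hat x_k$: it produces $y_{k,j}\in\R^n$, $x_{k,j}\in L^p$, $x^*_{k,j}\in L^q$ with $x^*_{k,j}(\omega)\in\Hat\partial d_{M(\omega)-a_k(\omega)}(x_{k,j}(\omega))$ a.e., $\|\hat x_k-x_{k,j}(\cdot)\|_p\to 0$ and $\int_\Omega x^*_{k,j}\,d\mu\to x^*_k$ as $j\to\infty$; a diagonal choice $j=j(k)$ then yields $\|\int_\Omega x^*_k(\omega)\,d\mu(\omega)\|\to 0$ after renaming, together with $\|x_k(\cdot)-\ox\|_p\to 0$ (combining $\|\hat x_k-\ox\|\to0$ with $\|x_{k,j}-\hat x_k\|_p\to0$). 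Since $\Hat\partial d_{M(\omega)-a_k(\omega)}(u)\subset\Hat N(u;M(\omega)-a_k(\omega))$ and a translate of the regular normal cone is the regular normal cone of the translate, setting $x_k(\omega):=x_{k,j(k)}(\omega)+a_k(\omega)$ gives $x^*_k(\omega)\in\Hat N(x_k(\omega);M(\omega))$ a.e.; moreover $d_K$-subgradients have norm $\le 1$, so $\|x^*_k\|_q\le\mu(\Omega)^{1/q}$, and one can normalize: if $\|x^*_k\|_q\to0$ we would contradict the construction by rescaling, so either a direct normalization $x^*_k/\|x^*_k\|_q$ works or — the cleaner route — I would carry the normalization through the Ekeland step by choosing the perturbation penalty so that $\|x^*_k\|_q=1$ can be forced. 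The pointwise estimate \eqref{est} is read off from where $x_{k,j}(\omega)$ lives: $d_{M(\omega)-a_k(\omega)}(x_{k,j}(\omega))$ is controlled (its integral $\to0$ by Proposition~\ref{theoremsubdiferential}(iii) together with $\Intf{f_k}(\hat x_k)\to0$), giving $x_{k,j}(\omega)$ within $\|a_k(\omega)\|+\epsilon_k$-ish of $M(\omega)$ and hence within $2\|a_k(\omega)\|+\epsilon_k$ of $\ox$ after using that the relevant regular-normal point can be taken on $M(\omega)$; one pushes the small defect into $\epsilon_k$.

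The main obstacle I anticipate is the \textbf{normalization} $\|x^*_k\|_q=1$: Ekeland's principle naturally produces a subgradient of \emph{small} norm (it only gives an approximate critical point), so the raw output is $\int x^*_k\,d\mu\to0$ but possibly also $\|x^*_k\|_q\to0$, which is the degenerate conclusion $0=0$. Overcoming this requires a genuinely variational argument showing the $L^q$-norm of the density cannot collapse — the standard device is to run the extremal-principle machinery on the \emph{scaled} problem or to combine, à la the classical extremal principle proof, a penalization that separates the role of the constraint violation from that of the localization, ensuring that the normalized dual element stays bounded below; the finiteness of $\mu$ and the nonoverlapping condition \eqref{nonoverlapping} should be exactly what prevents $x^*_k$ from vanishing, but making this quantitative is the delicate point. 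A secondary technical nuisance is that Proposition~\ref{theoremsubdiferential} is stated for normal integrands with values in $[0,\infty]$ and a regular subgradient of the \emph{integral functional on} $\R^n$, whereas I need to be careful that the $y_k$-versus-$x_k(\cdot)$ distinction there, and the integral smallness conditions (ii)--(iii), are compatible with extracting the single limiting relation $\int x^*_k\,d\mu\to0$ in $L^p$ rather than merely along a subsequence — a diagonalization over $k$ and the internal index $j$ handles this, but the bookkeeping of which sequence indexes what must be done with care so that the final $\|x_k(\cdot)-\ox\|_p\to0$ and \eqref{est} hold simultaneously.
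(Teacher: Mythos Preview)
Your proposal has a genuine gap at exactly the point you flag: the normalization $\|x_k^*\|_q=1$. Ekeland's principle plus Proposition~\ref{theoremsubdiferential} gives you only $\int_\Omega x_k^*\,d\mu\to 0$ with no lower bound on $\|x_k^*\|_q$, and your suggested fixes (``run the machinery on the scaled problem,'' ``separate the penalizations'') are not arguments. There is also a second defect you gloss over: the theorem asks for $\int_\Omega x_k^*\,d\mu=0$ \emph{exactly}, whereas Ekeland followed by the sequential Proposition~\ref{theoremsubdiferential} and a diagonalization can only deliver $\to 0$. A third, smaller problem: Proposition~\ref{theoremsubdiferential} produces points $x_{k,j}(\omega)$ that need not lie in $M(\omega)-a_k(\omega)$, so your line ``setting $x_k(\omega):=x_{k,j(k)}(\omega)+a_k(\omega)$ gives $x_k^*(\omega)\in\Hat N(x_k(\omega);M(\omega))$'' is not justified as written.

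The paper's proof avoids all three issues by a different and more direct route. It works with the \emph{$p$-th power} of the distance, $\varphi_k(x):=\int_\Omega d_{M(\omega)}^p(x+a_k(\omega))\,d\mu+\delta_{\cl U}(x)$, and uses Weierstrass (not Ekeland): since $U$ is bounded, $\varphi_k$ attains an exact minimum $\hat x_k$, and the nonoverlapping condition forces $\hat x_k\to\bar x$ while the extremality condition forces $\varphi_k(\hat x_k)>0$. One then picks measurable projections $x_k(\omega)\in M(\omega)$ of $\hat x_k+a_k(\omega)$ and observes that $\hat x_k$ also minimizes the \emph{convex} integral $\psi_k(x):=\int_\Omega\|x+a_k(\omega)-x_k(\omega)\|^p\,d\mu$ on $\cl U$. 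Now the \emph{convex} Leibniz rule (Proposition~\ref{propositiondifconvinte}) --- not Proposition~\ref{theoremsubdiferential} --- gives a selection $u_k^*(\omega)\in\partial\psi_{\omega,k}(\hat x_k)$ with $\int_\Omega u_k^*\,d\mu=0$ exactly, and an explicit computation shows $\int_\Omega\|u_k^*\|^q\,d\mu=p^q\varphi_k(\hat x_k)>0$. That single formula is what makes the normalization trivial: set $x_k^*:=u_k^*/\|u_k^*\|_q$. The regular-normal inclusion then follows from the projection characterization $\hat x_k+a_k(\omega)-x_k(\omega)\in\Hat N(x_k(\omega);M(\omega))$, and the estimate~\eqref{est} drops out of the triangle inequality with $\epsilon_k:=2\|\hat x_k-\bar x\|$. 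The key ideas you are missing are (a) direct minimization in place of Ekeland, (b) the passage from $d^p_{M(\omega)}$ to the explicit convex integrand via projection, and (c) the resulting exact identity for $\|u_k^*\|_q$.
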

\begin{proof} $\;$ For each $k\in\N$ define the function
\begin{equation}\label{ph}
\varphi_k(x):=\int_{\Omega}d^p_{M(\omega)}\big(x+a_k(\omega)\big)d\mu(\omega)+\delta_{{\rm\small cl}\,U}(x),\quad x\in\R^n,
\end{equation}
where $a_k\in L^p(\Omega,\mathbb{R})$ and a neighborhood $U$ of $\ox$ taken from Definition~\ref{local_extremal_at}. We also assume that $U$ is the one for which the nonoverlapping condition \eqref{nonoverlapping} holds. Let us split the subsequent proof into six claims.\\[1ex]
{\bf Claim~1:} {\em For each $k\in\N$ the function $\varphi_k$ from \eqref{ph} is proper, l.s.c.,\ and attains its minimum on $\R^n$}.\\[1ex]
To verify the claim, observe first that due to the fact that $\ox\in M(\omega)$ a.e.\
\begin{equation}\label{ineq01}
\varphi_k(\ox)\le\int_{\Omega}\|a_k(\omega)\|^p d\mu(\omega)<\infty.
\end{equation}
Furthermore, for fixed $k\in\N$ and for any sequence $z_j\to x$ we get by using Fatou's lemma that
\begin{align*}
\varphi_k(x)&=\int_{\Omega}d_{M(\omega)}^p\big(x+a_k(\omega)\big)d\mu(\omega)+\delta_{{\rm cl}\,U}(x)\\&\le\liminf_{j\to\infty}\left(\int_{\Omega} d^p_{M(\omega)}\big(z_j+a_k(\omega)\big)d\mu(\omega)\right)+\liminf_{j\to\infty}\delta_{{\rm\small cl}\,U}( z_j)\\
&\le\liminf_{j\to\infty}\varphi_k(z_j),
\end{align*}
which shows that the function $\varphi_k$ is proper and l.s.c.\ Since $U$ is bounded, it follows that $\varphi_k$ attains its minimum on $\R^n$.\\[1ex]
{\bf Claim~2:} {\em Let $\hat{x}_k$ be a minimizer of $\varphi_k$. Then $\varphi_k(\hat{x}_k)>0$, $\hat{x}_k\to\ox$, and $\varphi_k(\hat{x}_k)\to 0$ as $k\to\infty$}.\\[1ex]
Indeed, due to the construction of $\ph_k$ in \eqref{ph} we have $\hat{x}_k\in\cl U$ for all $k\in\N$, which yields the boundedness of $\{\hat{x}_k\}$. Moreover, it follows from the extremality condition \eqref{ext} that $\varphi_k(\hat{x}_k)>0$ as $k\in\N$, since the negation of it tells us that $\hat{x}_k\in M(\omega)-a_k(\omega)$ for almost all $\omega\in\Omega$, a contradiction. Considering now a cluster point $\hat{x}$ of $\{\hat{x}_k\}$, we assume without relabeling that $\hat{x}_k\to\hat x$ and $a_k(\omega)\to 0 $ a.e.\ (recall that $\|a_k(\cdot)\|_p\to 0$) as $k\to\infty$; therefore
\begin{equation*}
d_{M(\omega)}^p(\hat{x})=\liminf_{k\to\infty}d^p_{M(\omega)}\big(\hat{x}_{k}+a_{k}(\omega)\big).
\end{equation*}
Hence, by employing Fatou's lemma again, we get
\begin{align*}
\int_{\Omega}d_{M(\omega)}^p(\hat{x})d\mu(\omega)\le&\int_{\Omega}\liminf_{k\to\infty}d^p_{M(\omega)}\big(\hat{x}_{k}+a_{k}(\omega)\big)d\mu(\omega)\\\le& \liminf_{k\to\infty}\int_{\Omega}d^p_{M(\omega)}\big(\hat{x}_{k}+a_{k}(\omega)\big)d\mu(\omega)\le\liminf_{k\to\infty}\varphi_{k}(\hat x_{k})\\  \le&\liminf_{k\to\infty}\varphi_k(\ox)\le\lim_{k\to\infty}\int_{\Omega}\|a_k(\omega)\|^p d\mu(\omega)=0,
\end{align*}
where in the last line we used \eqref{ineq01} and the fact that $\|a_k(\cdot)\|_p \to 0$. This implies that $\varphi_{k}(\hat{x}_k)\to 0$ and $\hat{x}\in M(\omega)$ for almost all $\omega\in\Omega$, which ensures by the nonoverlapping condition \eqref{nonoverlapping} that $\hat{x}=\ox$. From now on we suppose without loss of generality that $\hat{x}_k\in U$ for all $k\in\N$.\\[1ex]
{\bf Claim~3:} {\em There exists a sequence of measurable selections $x_k(\omega)\in M(\omega)$ such that for all $k\in\N$ we have
\begin{align}\label{Fuzzy:022}
\|\hat{x}_k+a_k(\omega)-x_k(\omega)\|=d_{M(\omega)}\big(\hat{x}_k+a_k(\omega)\big)\;\text{ for a.e. }\;\omega\in\Omega,
\end{align}
$x_k\in L^p(\Omega,\mathbb{R}^n)$, and $\|x_k(\cdot)-\ox\|_p\to 0$ as $k\to\infty$ with estimate \eqref{est}.}\\[1ex]
Indeed, it follows from, e.g., \cite[Theorem~14.37]{rw} that for each $k\in\N$ there exists a measurable selection $x_k(\omega)\in M(\omega)$ satisfying \eqref{Fuzzy:022}. Furthermore
\begin{align*}
\|\ox-x_k(\omega)\|&\le\|\hat{x}_k+a_k(\omega)- x_k(\omega)\|+\|\hat{x}_k-\ox\|+\|a_k(\omega)\|\\
&=d_{M(\omega)}\big(\hat{x}_k+a_k(\omega)\big)+\|\hat{x}_k-\ox\|+\|a_k(\omega)\|\\
&\le\|\hat{x}_k+a_k(\omega)-\ox\|+\|\hat{x}_k-\ox\|+\|a_k(\omega)\|\\
&\le 2\|\hat{x}_k-\ox\|+2\|a_k(\omega)\|\;\mbox{ for almost all }\;\omega\in\O.
\end{align*}
This readily yields estimate \eqref{est} considering $\epsilon_k:=2\|\hat{x}_k-\ox\|$, and also ensures that
\begin{align*}
\int_{\Omega}\|\ox-x_k(\omega)\|^p d\mu(\omega)&\le2^{2p-1}\left(\|\hat{x}_k-\ox\|^p\mu(\Omega)+\int_{\Omega}\|a_k(\omega)\|^p d\mu(\omega)\right),
\end{align*}
which verifies the claimed properties of the measurable selections $x_k(\omega)$.\\[1ex]
{\bf Claim~4:} {\em For each $k\in\N$ the function
\begin{align*}
\psi_k(x):=\int_{\Omega}\psi_{\omega,k}(x)d\mu(\omega)+\delta_{{\rm\small cl}U}(x)\;\mbox{ with }\;\psi_{\omega,k}(x):=\|x+a_k(\omega)-x_k(\omega)\|^p
\end{align*}
admits a minimizer $\hat{x}_k$ over the whole space $\R^n$}.\\[1ex]
To verify this claim, observe that
\begin{equation*}\psi_k(x)\ge\varphi_k(x)\ge\varphi_k(\hat{x}_k)=\psi_k(\hat{x}_k)\;\mbox{ for all }\;x\in\mathbb{R}^n,
\end{equation*}
which tells us that $\hat{x}_k$ is a minimizer of $\psi_k$ on $\R^n$.\\[1ex]
{\bf Claim~5:} {\em For every $k\in\N$ there exists a measurable selection $u^*_k(\omega)\in\sub\psi_{\omega,k}(\hat{x}_k)$ such that $u_k^*(\cdot)\in L^q(\Omega,\mathbb{R}^n)$,
\begin{align}\label{eq_claim5}
\int_{\Omega}u^*_k(\omega)d\mu(\omega)=0,\;\text{ and }\;\int_{\Omega}\|u^*_k\|^q(\omega)d\mu(\omega)>0.
\end{align}}
To verify it, recall from Claim~4 that $\hat{x}_k$ is a minimizer of the function $\psi_k$ defined therein. Employing then Proposition~\ref{propositiondifconvinte} and the subdifferential Fermat rule, with taking into account that $\hat x_k\in U$, gives us a measurable selection $u^\ast_k(\omega)\in\sub\psi_{\omega,k}(\hat{x}_k)$ such that $\int_{\Omega}u^*_k(\omega)d\mu(\omega)=0$. Define further the set
\begin{align*}
A_k:=\big\{\omega\in\Omega\big|\;d_{M(\omega)}(\hat{x}_k+a_k(\omega))>0\big\}
\end{align*}
 and deduce from Claim~2 that $\mu(A_k)>0$. Moreover, we have
\begin{align*}
u^*_k(\omega)=p\|\hat{x}_k+a_k(\omega)-x_k(\omega)\|^{p-1}\frac{\hat{x}_k+a_k(\omega)-x_k(\omega)}{d_{M(\omega)}\big(\hat{x}_k+a_k(\omega)\big)}\;\text{ for a.e. }\;\omega\in A_k.
\end{align*}
On the other hand, $u_k^*(\omega)=0$ for almost all $\omega\in\Omega\backslash A$, which yields
\begin{align*}
\int_{\Omega}\|u^*_k(\omega)\|^q d\mu(\omega)=p^q\varphi_k(\hat{x}_k).
\end{align*}	
Consequently, we get $u^*_k\in L^q(\Omega,\mathbb{R}^n)$, and hence \eqref{eq_claim5} holds.\\[1ex]
{\bf Claim~6:} {\em Define $x_k^*(\omega):=\frac{u_k^*(\omega)}{\|u_k^*\|_q}$, $k\in\N$. Then
\begin{align}\label{last_eq}
x^*_k(\omega)\in\Hat N\big(x_k(\omega);M(\omega)\big)\text{ a.e. on }\;\Omega\;\text{ and }\int_{\Omega}x^*(\omega)d\mu(\omega)=0.
\end{align}}
Indeed, it follows from \eqref{Fuzzy:022} that $\hat{x}_k+a_k(\omega)-x_k(\omega)\in\Hat N\big(x_k(\omega);M(\omega)\big)$ a.e.\ on $\Omega$; see, e.g., the statement and proof in \cite[Theorem~1.6, Step~1]{m18}. Since $\Hat N\big(x_k(\omega);M(\omega)\big)$ is a cone, we get $x^*_k(\omega)\in\Hat N\big(x_k(\omega);M(\omega)\big)$ a.e.\ on $\Omega$. Furthermore, \eqref{eq_claim5} tells us that the function $x^*_k$ is well-defined and satisfies the second part of \eqref{last_eq}. This completes the proof of the theorem.
\end{proof}\vspace*{0.05in}

Next we consider measurable multifunctions with {\em cone values} and define for them another notion of extremality, which extends the one from \cite{mp1} formulated for countable systems of cones.\vspace*{-0.05in}

\begin{definition}[\bf conic extremality at the origin]\label{definition_extra_origin}
Let $(\Omega,\mathcal{A},\mu)$ be a measure space, and let $\Lambda\colon\Omega\tto\mathbb{R}^n$ be a measurable multifunction with cone values. We say that $\Lambda(\cdot)$ is {\em extremal at the origin} in $L^p(\Omega,\R^n)$ with some $p\in(1,\infty)$ if there exists $a(\cdot)\in L^p(\Omega,\R^n)$ such that
\begin{equation*}%\label{01}
\bigcap_{\omega\in\Omega \text{ a.e. }}\big(\Lambda(\omega)-a(\omega)\big):=\big\{x\in\R^n\big|\;x\in\Lambda(\omega)-a(\omega)\text{ a.e.}\big\}=\emp.
\end{equation*}
\end{definition}\vspace*{-0.02in}

The following result provides an extension of \cite[Theorem 4.2]{mp1} from countable set systems to measurable multifunctions. In contrast to Theorem~\ref{fuzzy_extremal_prin}, we now obtain the result in terms of the limiting normal cone \eqref{ln} calculated {\em exactly} at the extremal point $\ox=0$; this motivates the name of the result.\vspace*{-0.05in}

\begin{theorem}[\bf exact extremal principle for cone-valued multifunctions]\label{EXTREMALPRINCIPLE}
Let $\Lambda\colon\Omega\tto\mathbb{R}^n$ be a measurable multifunction defined on a finite measure space and taking closed cone values. Assume that $\Lambda$ is extremal at $0\in L^p(\Omega,\R^n)$ with some $p\in(1,\infty)$, and that the nonoverlapping condition
\begin{equation*}
\bigcap_{\omega\in\Omega \text{ a.e.}}\;\Lambda(\omega)=\{0\}
\end{equation*}
fulfills. Then the $($exact$)$ {\sc conic extremal principle} holds in $L^p(\Omega,\R^n)$ with $\frac{1}{p} +\frac{1}{q} =1$, i.e., there exists $x^*(\cdot)\in L^q(\Omega,\R^n)$ such that $x^*(\omega)\in N(0;\Lambda(\omega))$ for almost all $w\in\O$ together with the equalities
\begin{equation}\label{03}
\int_\Omega x^*(\omega)d\mu(\omega)=0\;\text{ and }\;\int_\Omega\|x^*(\omega)\|^q(\omega)d\mu(\omega)=1.
\end{equation}
Furthermore, we can find $w(\cdot)\in L^p(\Omega,\mathbb{R}^n)$ for which
\begin{equation*}
x^*(\omega)\in\Hat N\big(w(\omega);\Lambda(\omega)\big)\text{ a.e. }\omega \in \Omega.
\end{equation*}
\end{theorem}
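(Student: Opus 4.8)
The plan is to deduce the exact conic extremal principle directly from the sequential extremal principle of Theorem~\ref{fuzzy_extremal_prin} applied to $M:=\Lambda$ at the extremal point $\ox:=0$, exploiting the positive homogeneity of the cone-valued data so that \emph{no genuine limiting procedure} is needed. First I would reduce conic extremality at the origin to local extremality at $0$ in the sense of Definition~\ref{local_extremal_at}: taking $a(\cdot)\in L^p(\Omega,\R^n)$ from Definition~\ref{definition_extra_origin} and setting $a_k(\cdot):=\tfrac1k a(\cdot)$, one has $\|a_k(\cdot)\|_p=\tfrac1k\|a(\cdot)\|_p\to 0$, and since each $\Lambda(\omega)$ is a cone the inclusion $x\in\Lambda(\omega)-a_k(\omega)$ holds for a.e.\ $\omega$ if and only if $kx\in\Lambda(\omega)-a(\omega)$ for a.e.\ $\omega$. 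Hence $\bigcap_{\omega\in\Omega\text{ a.e.}}(\Lambda(\omega)-a_k(\omega))=\emptyset$ for every $k$, so \eqref{ext} holds for any fixed open neighborhood $U$ of $0$; the hypothesis $\bigcap_{\omega\in\Omega\text{ a.e.}}\Lambda(\omega)=\{0\}$ gives the nonoverlapping condition \eqref{nonoverlapping} at $\ox=0$ for the same $U$.

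With these hypotheses in place, I would invoke Theorem~\ref{fuzzy_extremal_prin} to produce sequences $x_k^*\in L^q(\Omega,\R^n)$ and $x_k\in L^p(\Omega,\R^n)$ such that $x_k^*(\omega)\in\Hat N\big(x_k(\omega);\Lambda(\omega)\big)$ a.e., $\int_\Omega x_k^*(\omega)\,d\mu(\omega)=0$, and $\|x_k^*\|_q=1$ for all $k\in\N$ (the estimate \eqref{est} is not needed here). Note that $x_k(\omega)\in\Lambda(\omega)$ a.e., since the regular normal cone is empty outside the set.

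The decisive point is that the cone structure lets one pass to the limiting normal cone at the origin \emph{without taking any limit}. Indeed, since $\Lambda(\omega)$ is a closed cone and $x_k(\omega)\in\Lambda(\omega)$ a.e., Proposition~\ref{inclusionpostivehomogeneous} yields $\Hat N\big(x_k(\omega);\Lambda(\omega)\big)\subset N\big(0;\Lambda(\omega)\big)$, whence $x_k^*(\omega)\in N\big(0;\Lambda(\omega)\big)$ a.e.\ for every $k$. Thus each term of the sequence already satisfies all the required conditions, and it suffices to fix one index, say $k=1$, and set $x^*(\cdot):=x_1^*(\cdot)\in L^q(\Omega,\R^n)$ and $w(\cdot):=x_1(\cdot)\in L^p(\Omega,\R^n)$: then $x^*(\omega)\in N\big(0;\Lambda(\omega)\big)$ and $x^*(\omega)\in\Hat N\big(w(\omega);\Lambda(\omega)\big)$ a.e., while $\int_\Omega x^*(\omega)\,d\mu(\omega)=0$ and $\int_\Omega\|x^*(\omega)\|^q\,d\mu(\omega)=\|x_1^*\|_q^q=1$, which are exactly the assertions \eqref{03} and the final claim.

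I expect the main (and essentially only) subtle points to be the homogeneity argument that converts conic extremality at $0$ into local extremality with $L^p$-vanishing perturbations $a_k$, and the observation that, thanks to Proposition~\ref{inclusionpostivehomogeneous}, the exact form is reached directly from the approximate one — in contrast with the classical finite-family, finite-dimensional situation, where one must extract a subsequence of normalized dual vectors and pass to a limit. One could alternatively take a weak limit of $\{x_k^*\}$ in the reflexive space $L^q$, but that would only give $\|x^*\|_q\le 1$, so it is both more involved and weaker than the argument above.
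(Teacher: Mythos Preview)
Your proof is correct and follows essentially the same route as the paper: reduce conic extremality at the origin to local extremality by scaling the perturbation $a(\cdot)$, apply Theorem~\ref{fuzzy_extremal_prin}, and then use Proposition~\ref{inclusionpostivehomogeneous} to upgrade $\Hat N(x_k(\omega);\Lambda(\omega))$ to $N(0;\Lambda(\omega))$ without any limiting passage. Your presentation is in fact slightly more explicit than the paper's, since you spell out the homogeneity equivalence $x\in\Lambda(\omega)-a_k(\omega)\Leftrightarrow kx\in\Lambda(\omega)-a(\omega)$ and state cleanly that any fixed index (e.g.\ $k=1$) already yields the desired $x^*(\cdot)$ and $w(\cdot)$.
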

\begin{proof} $\;$ Let us show that the conic extremality of the mapping $\Lambda$ imposed in Theorem~\ref{EXTREMALPRINCIPLE} implies that $\Lambda$ is locally extremal at the origin in the sense of Definition~\ref{local_extremal_at}. Indeed, take $\alpha_k\dn 0$  as $k\to\infty$ and define $a_k(\omega):=\alpha_k a(\omega)$. Then for all $k\in\N$ we get the relationship
\begin{align*}
\bigcap_{\omega\in\Omega\text{ a.e.}}\big(\Lambda(\omega)-a_k(\omega)\big)=\emp,
\end{align*}
which verifies the claim. Applying now Theorem~\ref{fuzzy_extremal_prin} to $\Lambda$ with taking into account Proposition~\ref{inclusionpostivehomogeneous} gives us $x^*(\omega)\in\Hat N(x_k(\omega);\Lambda(\omega))\subset N(0;\Lambda(\omega))$ such that the conditions in \eqref{03} are satisfied. This completes the proof.
\end{proof}\vspace*{0.05in}

It is easy to see that the case of countably many cones in \cite[Theorem~4.2]{mp1} and \cite[Theorem~2.9]{m18} follows from Theorem~\ref{EXTREMALPRINCIPLE} with $p=2$ by considering the measure space $(\N,\mathcal{P}(\N),\mu)$, where $\mathcal{P}(\N)$ denotes the power set of $\N$, and where $\mu$ is the atomic measure given by $\mu(\{m\}):=(2^m)^{-1}$, $m\in\N$. Note that the proofs in \cite{m18,mp1} are significantly different from  the one given above.\vspace*{-0.1in}

\begin{remark} $(${\em on nonoverlapping condition}$)$\label{rem-nonover} The nonoverlapping condition was introduced in \cite{mp1} for developing extremal principles for countably many sets. It is  needed to bypass the intrinsic infinite dimensionality of essential intersections. Observe that this condition is not so restrictive because, as shown in the next section, we can construct while proving calculus rules a family of sets that automatically satisfies the nonoverlapping property.
\end{remark}\vspace*{-0.1in}

In what follows we are going to focus on applications of the sequential extremal principle for measurable multifunctions established in Theorem~\ref{fuzzy_extremal_prin} while planning to present various applications of the conic extremal principle from Theorem~\ref{EXTREMALPRINCIPLE} in our subsequent work; cf.\  some developments in \cite{m18,mp1,mp2} for the case of countably many sets.\vspace*{-0.1in}
\vspace*{-0.2in}

\section{Normals to essential intersections  via optimization}\label{SectionIntegralRepresentation}\vspace*{-0.1in}

The major goal of this section is to obtain efficient upper estimates and exact formulas for generalized normals to essential intersections \eqref{ess-inter} for measurable multifunctions by using a {\em variational approach}, which is mainly based on the extremal principle established above. Some of the results obtained here concern {\em cone-valued} mappings, and then they will be used in the next section in connection with the {\em conical hull intersection property} (CHIP). \vspace*{0.05in}

We begin with presenting such a result employed in what follows. It provides {\em sequential} optimality conditions for problems of type \eqref{formulation2}.\vspace*{-0.08in}

\begin{lemma}[sequential optimality conditions]\label{Fuzzyoptimalitycondition} Let $\ox$ locally minimize an l.s.c.\ function $h\colon\R^n\to\oR$ subject to $x\in M(\omega)$ a.e., where $M\colon\O\tto\R^n$ is a closed-valued measurable multifunction on a $\sigma$-finite measure space $(\Omega,\mathcal{A},\mu)$. Then for any $p,q\in[1,\infty]$ with $1/p+1/q=1$ there exist sequences $y_k,z_k,z_k^*\in\mathbb{R}^n$, $x_k(\cdot)\in L^p(\Omega,\mathbb{R}^n)$, and $x_k^*(\cdot)\in L^q({\Omega},\mathbb{R}^n)$ satisfying the conditions\vspace*{-0.05in}
\begin{equation*}
\begin{array}{ll}
&z_k^*\in\Hat\partial h(z_k),\;x_k^*(\omega)\in\Hat N\big(x_k(\omega);M(\omega)\big)\mbox{\text{ a.e.}},\\
&z_k\overset{h}{\to}\ox,\;\|y_k-\bar{x}\|\to 0,\;\|x_k(\cdot)-\bar{x}\|_p\to 0,\\
&\disp\int_{\Omega}\|x_k^*(\omega)\|\cdot\|x_k(\omega)-y_k\|d\mu(\omega)\to 0,\;z_k^*+\int_\Omega x_k^*(\omega)d\mu(\omega)\to 0,
\end{array}
\end{equation*}\vspace*{-0.1in}
where the symbol $z_k\overset{h}{\to}\ox$ means that $z_k\to\ox$ with $h(z_k)\to h(\ox)$ as $k\to\infty$.
\end{lemma}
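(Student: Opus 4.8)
The strategy is to reduce the constrained minimization problem to an \emph{unconstrained} one by a localized penalization, and then to apply the sequential extremal principle of Theorem~\ref{fuzzy_extremal_prin} to an auxiliary measurable multifunction built from the epigraph of $h$ together with the values $M(\omega)$. First I would fix a neighborhood $U$ of $\ox$ on which $h(x)\ge h(\ox)$ for all feasible $x\in U$, and I would assume without loss of generality that $h(\ox)=0$ by shifting, and also that $U$ is bounded. The key construction is to work in the product space $\R^{n+1}$ (to accommodate the epigraphical variable) or, alternatively, to add the ``set'' $S_0:=\epi h$ as an extra component. Concretely, I would consider the measure space $\Omega':=\Omega\sqcup\{\omega_0\}$ obtained by adjoining one atom $\omega_0$ with, say, unit mass, and the multifunction $\widetilde M\colon\Omega'\tto\R^{n+1}$ given by $\widetilde M(\omega):=M(\omega)\times\R$ for $\omega\in\Omega$ and $\widetilde M(\omega_0):=\epi h$. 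One checks routinely that $\widetilde M$ is measurable and closed-valued; the essential intersection $\widetilde M_\cap$ consists of pairs $(x,\alpha)$ with $x$ feasible and $\alpha\ge h(x)$, and the point $(\ox,0)$ lies in it. Since $\ox$ is a local minimizer with value $0$, the point $(\ox,0)$ is ``extremal'' for $\widetilde M$ in the required sense: the downward deformations $\widetilde a_k(\omega):=0$ for $\omega\in\Omega$ and $\widetilde a_k(\omega_0):=(0,-\tfrac1k)$ push the intersection, locally around $(\ox,0)$, out of the enlarged neighborhood $U\times(-\delta,\delta)$ — because a common point after the shift would mean $h(x)\le -\tfrac1k<0=h(\ox)$ at some feasible $x$ near $\ox$, contradicting local optimality.

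The second step is to verify the \textsc{nonoverlapping} hypothesis \eqref{nonoverlapping} for $\widetilde M$. In general $\widetilde M_\cap\cap(U\times\R)$ need not reduce to a single point, so I would handle this by the standard device of further intersecting with a small ball: replace $\widetilde M(\omega_0)$ by $\epi h\cap(\B_r(\ox)\times\R)$ — which is still closed — or, more cleanly, add yet another atom $\omega_1$ with $\widetilde M(\omega_1):=\{\ox\}\times\R$ scaled appropriately; this forces the first $n$ coordinates of any point in the intersection to equal $\ox$, but the epigraphical coordinate still ranges over $[0,\infty)$, so the intersection is $\{\ox\}\times[0,\infty)$, not a singleton. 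The honest fix is therefore to intersect also in the last coordinate: take $\widetilde M(\omega_1):=\R^n\times\{0\}$, whose essential intersection with the rest is exactly $\{(\ox,0)\}$ as soon as $\ox$ is the only feasible point of $U$ realizing value $0$ — and if it is not, one restricts $U$ further using lower semicontinuity and the fact that we only need \emph{a} local minimizer. Alternatively, and this is cleaner, I would not demand the nonoverlapping condition of $\widetilde M$ directly but instead invoke the variational-penalization argument underlying Theorem~\ref{fuzzy_extremal_prin}: minimize $\ph_k(x,\alpha):=\sum_{\text{atoms}}d^p_{\widetilde M(\cdot)}(\,\cdot\,+\widetilde a_k)+\int_\Omega d^p_{M(\omega)}(x+a_k(\omega))\,d\mu+\delta_{\cl U}(x)+\alpha^2$, where the extra term $\alpha^2$ (or $|\alpha|^p$) plays the role of the nonoverlapping penalization in the epigraphical direction. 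This keeps minimizers bounded and forces them to converge to $(\ox,0)$ by the same Fatou-lemma estimates as in Claims~1--2 of the proof of Theorem~\ref{fuzzy_extremal_prin}.

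The third step is bookkeeping: apply the sequential extremal principle (in the $L^p$-setting, using that the adjoined piece is a finite atomic part so it simply contributes finitely many ordinary regular normals) to obtain measurable $x_k^*(\omega)\in\Hat N(x_k(\omega);M(\omega))$ and a regular normal $(\xi_k,-\lambda_k)$ to $\epi h$ at $(z_k,h(z_k))$ for some $z_k\to\ox$ with $h(z_k)\to h(\ox)$, together with the balance $\int_\Omega x_k^*(\omega)\,d\mu(\omega)+\xi_k\to 0$ in an appropriate approximate sense and the asymptotic orthogonality $\int_\Omega\|x_k^*(\omega)\|\,\|x_k(\omega)-y_k\|\,d\mu(\omega)\to0$ coming from \eqref{est} (since $\|x_k(\omega)-\ox\|\le 2\|a_k(\omega)\|+\epsilon_k$ and $a_k\to0$ in $L^p$). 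Here the normalization $\|x_k^*\|_q=1$ together with the extremality being ``driven'' only by the epigraphical atom guarantees $\lambda_k>0$ for large $k$, after which dividing $(\xi_k,-\lambda_k)$ by $\lambda_k$ and setting $z_k^*:=\xi_k/\lambda_k$ yields $z_k^*\in\Hat\partial h(z_k)$ by definition \eqref{fs}; renormalizing $x_k^*$ by the same $\lambda_k$ preserves $x_k^*(\omega)\in\Hat N(x_k(\omega);M(\omega))$ (a cone) and converts the balance into $z_k^*+\int_\Omega x_k^*(\omega)\,d\mu(\omega)\to0$. Setting $y_k:=\hat x_k$ (the auxiliary minimizer) gives the remaining condition.

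The main obstacle I anticipate is precisely the \emph{separation of the epigraphical multiplier}: one must rule out the degenerate case $\lambda_k=0$ for all large $k$, i.e., ensure the extremal principle does not produce a conclusion carried entirely by the constraints $M(\omega)$ with zero weight on the objective. This is handled by observing that the \emph{deformation} $\widetilde a_k$ acts \emph{only} on the atom $\omega_0$, so the positivity $\int\|u_k^*\|^q>0$ in Claim~5 of Theorem~\ref{fuzzy_extremal_prin} is localized there; equivalently, one argues that if $\lambda_k\to0$ then $(z_k,h(z_k))$ would be an ``interior-like'' point of $\epi h$ relative to the deformation, contradicting $\ph_k(\hat x_k)>0$. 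The remaining steps — measurability of $\widetilde M$, the Fatou estimates, and extracting the $z_k\overset{h}{\to}\ox$ convergence from $\ph_k(\hat x_k)\to0$ — are routine adaptations of the arguments already carried out in the proof of Theorem~\ref{fuzzy_extremal_prin}, so I would not belabor them.
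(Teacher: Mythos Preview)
Your route is \emph{not} the one the paper takes, and in fact it cannot deliver the statement as written. The paper's proof is a two-line reduction: adjoin a single atom $\omega_0$ to $\Omega$, define the normal integrand
\[
f(\omega,x):=\begin{cases}h(x)&\omega=\omega_0,\\ \delta_{M(\omega)}(x)&\omega\ne\omega_0,\end{cases}
\]
observe that $E_f(x)=h(x)+\delta_{M_\cap}(x)$ has $\ox$ as a local minimizer, and apply Fermat plus Proposition~\ref{theoremsubdiferential} (the sequential subdifferential rule for nonconvex integrals). That proposition immediately yields elements of $\Hat\partial f_\omega(x_k(\omega))$, which for $\omega=\omega_0$ land in $\Hat\partial h(z_k)$ with no multiplier to worry about.

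Your extremal-principle approach runs into two linked obstructions that are not cosmetic. First, the nonoverlapping condition \eqref{nonoverlapping} genuinely requires \emph{strict} local minimality: none of your fixes (adding $\{\ox\}\times\R$, adding $\R^n\times\{0\}$, or the $\alpha^2$ penalty) recovers \eqref{nonoverlapping} for the original multifunction when there are other feasible points with $h(x)=h(\ox)$ near $\ox$, and Lemma~\ref{Fuzzyoptimalitycondition} does not assume strictness. Second, and more seriously, your argument that the epigraphical multiplier $\lambda_k$ must be positive is incorrect. The positivity $\varphi_k(\hat x_k)>0$ only says the minimizer misses the \emph{shifted} intersection; it does not localize which component carries the defect. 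Even though your deformation acts only at $\omega_0$, the auxiliary minimizer $\hat x_k$ need not lie in $M(\omega)$ for $\omega\in\Omega$, so the normals $u_k^*(\omega)$ there can be nonzero while the epigraphical normal vanishes. This is exactly why the paper's Lemma~\ref{OPTCONDATZERO}---which \emph{does} follow your extremal-principle route, under the additional hypothesis of strict minimality---has to state a dichotomy: either condition (i) with $\Hat\partial h$, or condition (ii) with $\Hat\partial^\infty h$. You cannot collapse that dichotomy without further assumptions, so your plan at best reproves Lemma~\ref{OPTCONDATZERO}, not Lemma~\ref{Fuzzyoptimalitycondition}.
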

\begin{proof} $\;$ Assume without loss of generality that the measure $\mu$ is finite. We can always suppose that $\Omega$ is a subset of a larger set; e.g., the collections of all its subsets. Then Cantor's theorem from measure theory (see, e.g., \cite[Theorem 161, p. 276]{Generaltopology}) tells us the cardinal number of the latter set is strictly larger than that of $\Omega$, and so there exists $\omega_0\notin\Omega$. Picking such a point $\omega_0$, define the measure space $(\Tilde{\Omega},\Tilde{\mathcal{A}},\Tilde{\mu})$ as follows: $\Tilde{\Omega}:=\Omega\cup\{\omega_0\}$ and $\Tilde{\mathcal{A}}$ is the $\sigma$-algebra generated by $\mathcal{A}\cup\{\{\omega_0\}\}$, which is nothing else than $\Tilde{\mathcal{A}}=\mathcal{A}\cup\{A\cup\{\omega_0\}|\;A\in\mathcal{A}\}$ with the measure
\begin{align*}
\Tilde{\mu}(A):=\left\{\begin{array}{lc}
\mu(A)&\text{ if }\;\omega_0\notin A,\\
\mu(A\backslash\{\omega_0\})+1&\text{ if }\;\omega_0\in A.
\end{array}\right.
\end{align*}
Define now the integrand $f\colon\Tilde{\Omega}\times\mathbb{R}^n\to\Rex$ by
\begin{align*}
f(\omega,x):=\left\{\begin{array}{cc}
h(x)&\text{ if }\;\omega=\omega_0,\\
\delta_{M(\omega)}(x)& \text{ if }\;\omega\ne\omega_0
\end{array}\right.
\end{align*}
and consider the function $\Intf{f}(x):=\int_{\Tilde{\Omega}}f(\omega,x)d\Tilde{\mu}(\omega)$ for which $\Intf{f}(x)=h(x)$ if $ x\in M(\omega)$ a.e.\ and $\Intf{f}(x)=\infty $ otherwise. It is easy to see that $\bar{x}$ is a local minimizer of $\Intf{f}$. Thus the subdifferential Fermat rule yields $0\in\Hat\partial\Intf{f}(\bar{x})$. Applying finally Proposition~\ref{theoremsubdiferential} completes the proof of the lemma.
\end{proof}\vspace*{0.05in}

Now we are ready to derive integral upper estimates of regular and limiting normals to the essential intersection $M_\cap$ by using the {\em closure} operation.\vspace*{-0.05in}

\begin{theorem}[upper estimates of normals via integral closures]\label{COROLLARY:FUZZYEXPRESSION} Let $M\colon\Omega\tto\R^n$ be a measurable multifunction with closed cone values. Then
\begin{equation}\label{COROLLARY:FUZZYEXPRESSION:EQUATION}
N(x;M_{\cap})\subset\cl\left(\int_\Omega N\big(0;M(\omega)\big)d\mu(\omega)\right)\;\mbox{ for all }\;x\in\mathbb{R}^n.
\end{equation}
\end{theorem}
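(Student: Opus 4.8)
\
The plan is to reduce the inclusion to a statement about the \emph{regular} normal cone at an arbitrary point of $M_\cap$ and then to feed the resulting constrained minimization into Lemma~\ref{Fuzzyoptimalitycondition}. First, since $N(x;M_\cap)=\emp$ for $x\notin M_\cap$, and since for $x\in M_\cap$ one has $N(x;M_\cap)=\Limsup_{x'\to x}\Hat N(x';M_\cap)$ with the limit over $x'\in M_\cap$, while the right-hand side of \eqref{COROLLARY:FUZZYEXPRESSION:EQUATION} is a \emph{closed} set independent of $x$, it will be enough to prove that
\begin{equation*}
\Hat N(\hat x;M_\cap)\subset\cl\Big(\int_\Omega N\big(0;M(\omega)\big)d\mu(\omega)\Big)\quad\mbox{for every }\;\hat x\in M_\cap.
\end{equation*}
I would also reduce to the case of a finite measure $\mu$: replacing $\mu$ by an equivalent finite measure leaves $M_\cap$ unchanged and, since the values $N(0;M(\omega))$ are cones, also leaves the Aumann integral on the right unchanged.

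Next, fixing $\hat x\in M_\cap$ and $\hat x^*\in\Hat N(\hat x;M_\cap)$, for each $\gamma>0$ I would introduce the continuous (hence l.s.c.)\ function $h_\gamma(x):=\gamma\|x-\hat x\|-\langle\hat x^*,x-\hat x\rangle$. Directly from the definition \eqref{rn} of the regular normal cone, $\hat x^*\in\Hat N(\hat x;M_\cap)$ yields a neighborhood of $\hat x$ on which $\langle\hat x^*,x-\hat x\rangle\le\gamma\|x-\hat x\|$ whenever $x\in M_\cap$; equivalently, $\hat x$ locally minimizes $h_\gamma$ subject to $x\in M(\omega)$ a.e. Applying Lemma~\ref{Fuzzyoptimalitycondition} (say with $p=q=2$) then gives sequences $z_k,z_k^*\in\R^n$ and $x_k,x_k^*\in L^2(\Omega,\R^n)$ with $z_k^*\in\Hat\partial h_\gamma(z_k)$, $x_k^*(\omega)\in\Hat N\big(x_k(\omega);M(\omega)\big)$ a.e., and $z_k^*+\int_\Omega x_k^*(\omega)d\mu(\omega)\to 0$. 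Because $x\mapsto\gamma\|x-\hat x\|$ is convex and $\gamma$-Lipschitz and $x\mapsto-\langle\hat x^*,x-\hat x\rangle$ is smooth with gradient $-\hat x^*$, the sum rule for the regular subdifferential forces $\Hat\partial h_\gamma(z_k)\subset-\hat x^*+\gamma\B$, so $\|z_k^*+\hat x^*\|\le\gamma$ for all $k$.

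Finally, the cone values come into play: since each $M(\omega)$ is a closed cone, Proposition~\ref{inclusionpostivehomogeneous} gives $x_k^*(\omega)\in\Hat N\big(x_k(\omega);M(\omega)\big)\subset N\big(0;M(\omega)\big)$ a.e.; with $\mu$ finite we have $x_k^*\in L^2(\Omega,\R^n)\subset L^1(\Omega,\R^n)$, so $\int_\Omega x_k^*(\omega)d\mu(\omega)\in\int_\Omega N\big(0;M(\omega)\big)d\mu(\omega)$. Combining this with $z_k^*+\int_\Omega x_k^*(\omega)d\mu(\omega)\to 0$, the bound $\|z_k^*+\hat x^*\|\le\gamma$, and the triangle inequality, one obtains $\dist\big(\hat x^*,\int_\Omega N(0;M(\omega))d\mu(\omega)\big)\le\gamma$, and letting $\gamma\dn 0$ concludes that $\hat x^*\in\cl\big(\int_\Omega N(0;M(\omega))d\mu(\omega)\big)$.

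The step I expect to be the main obstacle is the variational reduction in the second paragraph: converting the regular normal $\hat x^*$ into a genuine constrained local minimizer through the $\gamma$-perturbation and then controlling the passage $\gamma\dn 0$ together with $k\to\infty$. Once the auxiliary function $h_\gamma$ is chosen correctly, the rest---the subdifferential sum rule, transferring $\Hat N(x_k(\omega);M(\omega))$ to $N(0;M(\omega))$ via Proposition~\ref{inclusionpostivehomogeneous}, and the integrability check that keeps $\int_\Omega x_k^*d\mu$ inside the Aumann integral---should be routine.
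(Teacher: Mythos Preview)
Your proposal is correct and follows essentially the same route as the paper's proof: reduce to the regular normal cone, use the $\gamma$-penalized function $h_\gamma(x)=\gamma\|x-\hat x\|-\langle\hat x^*,x-\hat x\rangle$ to turn $\hat x^*\in\Hat N(\hat x;M_\cap)$ into a constrained local minimum, apply Lemma~\ref{Fuzzyoptimalitycondition}, push the resulting $\Hat N(x_k(\omega);M(\omega))$ into $N(0;M(\omega))$ via Proposition~\ref{inclusionpostivehomogeneous}, and let $\gamma\dn 0$. The only cosmetic differences are that the paper adds an explicit localizing indicator $\delta_{\mathbb{B}_\eta(\hat x)}$ to $h_\gamma$ (harmless once $z_k$ is near $\hat x$) and does not spell out the $\sigma$-finite to finite reduction you mention.
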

\begin{proof} $\;$ To justify \eqref{COROLLARY:FUZZYEXPRESSION:EQUATION}, let us first verify the inclusion with the regular normal cone on the left-hand side. Indeed, take any $x^*\in\Hat N(x;M_{\cap})$ with $x\in\R^n$ and for every $\ve>0$ find by definition \eqref{rn} such  $\eta\in (0,\ve)$ that the function
\begin{equation*}
y\mapsto-\langle x^*,y-x\rangle+\epsilon\|y-x\|+\delta_{\mathbb{B}_\eta(x)}(y)+\delta_{M_{\cap}}(y)
\end{equation*}
attains its minimum at $x$.	Applying Lemma~\ref{Fuzzyoptimalitycondition} gives us sequences $v_k^*\in\mathbb{B}$ and $x_k^*(\omega)\in\Hat N(x_k(\omega);M(\omega))$ a.e. for which
$\|-x^*+\epsilon v_k^*+\int_\O x_k^*(\omega)d\mu(\omega)\|\to 0$ as $k\to\infty$. Since $M(\omega)$ are cones, we deduce from Proposition~\ref{inclusionpostivehomogeneous} that $x_k^*(\omega) \in N(0;M(\omega))$ a.e.\ It implies therefore that
\begin{align*}
x^*\in\int_\Omega N\big(0;M(\omega)\big)d\mu(\omega)+2\epsilon\mathbb{B}.
\end{align*}
 Taking into account that $\epsilon>0$ was chosen arbitrarily, we arrive at the claimed inclusion \eqref{COROLLARY:FUZZYEXPRESSION:EQUATION}. The regular normal cone therein can be clearly replaced by the limiting one by definition \eqref{ln}.
\end{proof}\vspace*{0.05in}

The next result, which is a consequence of Theorem~\ref{COROLLARY:FUZZYEXPRESSION} and basic convex analysis, establishes the normal regularity of $M_\cap$ and gives us the {\em precise} formulas for calculating the normal cone and its relative interior under the normal regularity assumption imposed on $M(\omega)$ for almost all $\omega\in\O$.\vspace*{-0.05in}

\begin{corollary}[precise formulas for normals under normal regularity]\label{COROLLARY:RELATIVE_EINTERIOR} Let $M\colon\Omega\tto\R^n$ be a measurable multifunction with closed cone values. Assume that $M(\omega)$ is normally regular at the origin for a.e.\ $\omega\in\Omega$. Then the set $M_\cap$ is normally regular at the origin, and we have the equalities
\begin{align}
N(0;M_{\cap})&=\cl\left(\int_\Omega N\big(0;M(\omega)\big)d\mu(\omega)\right),\label{COROLLARY:RELATIVE_EINTERIOR01}\\
\ri\left(N(0;M_{\cap})\right)&=\ri\left(\int_\Omega N\big(0;M(\omega)\big)d\mu(\omega)\right).\label{COROLLARY:RELATIVE_EINTERIOR02}
\end{align}
\end{corollary}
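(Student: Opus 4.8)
The plan is to reduce the statement to elementary facts about \emph{polar cones}, which is where the normal regularity hypothesis does its work. The starting observation is that for an arbitrary cone $K\subset\R^n$ one has $\Hat N(0;K)=K^{*}$: the inclusion $K^{*}\subset\Hat N(0;K)$ is immediate from definition \eqref{rn}, since the numerator there is then nonpositive, and the reverse inclusion follows by evaluating \eqref{rn} along the rays $t\mapsto tx$ with $x\in K$ and $t\downarrow0$. Hence the normal regularity of $M(\omega)$ at the origin gives $N(0;M(\omega))=\Hat N(0;M(\omega))=M(\omega)^{*}$ for a.e.\ $\omega\in\Omega$. Next I would note that $M_{\cap}$ is itself a \emph{closed cone}: it is a cone since each $M(\omega)$ is, and it is closed because any sequence converging in $M_{\cap}$ lies, outside a countable union of $\mu$-null sets, in every $M(\omega)$, so its limit does too. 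Consequently $\Hat N(0;M_{\cap})=M_{\cap}^{*}$, and in particular this set is closed and convex.

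The core step is to sandwich $N(0;M_{\cap})$ between $\Hat N(0;M_{\cap})$ and $M_{\cap}^{*}$. On one side, Theorem~\ref{COROLLARY:FUZZYEXPRESSION} applied at $x=0$, together with $N(0;M(\omega))=M(\omega)^{*}$ a.e., gives
\begin{equation*}
N(0;M_{\cap})\subset\cl\Big(\int_{\Omega}M(\omega)^{*}\,d\mu(\omega)\Big).
\end{equation*}
On the other side, a one-line pairing argument gives $\int_{\Omega}M(\omega)^{*}\,d\mu(\omega)\subset M_{\cap}^{*}$: if $x^{*}=\int_{\Omega}x^{*}(\omega)\,d\mu(\omega)$ with $x^{*}(\cdot)\in L^{1}(\Omega,\R^n)$ and $x^{*}(\omega)\in M(\omega)^{*}$ a.e., then for each fixed $x\in M_{\cap}$ we have $\langle x^{*}(\omega),x\rangle\le0$ a.e., so $\langle x^{*},x\rangle=\int_{\Omega}\langle x^{*}(\omega),x\rangle\,d\mu(\omega)\le0$. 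Since $M_{\cap}^{*}$ is closed, taking closures yields $\cl\big(\int_{\Omega}M(\omega)^{*}\,d\mu(\omega)\big)\subset M_{\cap}^{*}=\Hat N(0;M_{\cap})$. Chaining these inclusions with the trivial $\Hat N(0;M_{\cap})\subset N(0;M_{\cap})$ forces all of them to be equalities, which simultaneously establishes the normal regularity of $M_{\cap}$ at the origin and formula \eqref{COROLLARY:RELATIVE_EINTERIOR01}.

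For the relative interior formula \eqref{COROLLARY:RELATIVE_EINTERIOR02}, put $C:=\int_{\Omega}N(0;M(\omega))\,d\mu(\omega)$ and note that $C$ is a nonempty \emph{convex} cone: it contains the origin, and since $\omega\mapsto N(0;M(\omega))$ is convex-valued, convex combinations of integrable selections are again integrable selections, so the Aumann integral is stable under convex combinations. By \eqref{COROLLARY:RELATIVE_EINTERIOR01} we have $N(0;M_{\cap})=\cl C$, and the classical convex-analysis identity $\ri(\cl C)=\ri C$ (see, e.g., \cite{rw}) gives \eqref{COROLLARY:RELATIVE_EINTERIOR02}. I do not expect a genuine obstacle here; the only nonroutine ingredient is the equality $\Hat N(0;K)=K^{*}$ for closed cones, and the conceptual point worth highlighting is that normal regularity of the values $M(\omega)$ is precisely what converts the one-sided estimate of Theorem~\ref{COROLLARY:FUZZYEXPRESSION} into an equality: without it, $N(0;M(\omega))$ may strictly contain $M(\omega)^{*}$ and the inclusion $\cl C\subset\Hat N(0;M_{\cap})$ fails.
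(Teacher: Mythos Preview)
Your proof is correct and follows essentially the same route as the paper's: both combine the upper estimate of Theorem~\ref{COROLLARY:FUZZYEXPRESSION} with the lower inclusion $\int_{\Omega}N(0;M(\omega))\,d\mu(\omega)\subset\Hat N(0;M_{\cap})$, the latter obtained by integrating the pointwise inequality $\langle x^{*}(\omega),x\rangle\le 0$ for $x\in M_{\cap}$, and both finish \eqref{COROLLARY:RELATIVE_EINTERIOR02} via $\ri(\cl C)=\ri C$ for convex $C$. Your version simply makes the polar-cone identification $\Hat N(0;K)=K^{*}$ explicit, whereas the paper leaves this step as ``it follows from the assumed normal regularity of $M(\omega)$ a.e.\ and the definition of $M_{\cap}$''; the arguments are otherwise identical.
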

\begin{proof} $\;$ Take $u^*\in\int_\Omega N(0;M(\omega))d\mu(\omega)$ and find integrable selection $x^*(\omega)\in N(0;M(\omega))$ a.e.\ with $u^*=\int_\Omega x^*(\omega)d\mu(\omega)$. It follows from the assumed normal regularity of $M(\omega)$ a.e.\ and the definition of $M_\cap$ that $u^*\in\Hat N(0;M_\cap)$, and so $\int_\Omega N(0;M(\omega))d\mu(\omega)\subset\Hat N(0;M_{\cap})$. Applying Theorem~\ref{COROLLARY:FUZZYEXPRESSION} yields
\begin{equation*}
\Hat N(0;M_{\cap})=N(0;M_{\cap})=\cl\left(\int_\Omega N\big(0;M(\omega)\big)d\mu(\omega)\right),
\end{equation*}
which verifies the normal regularity of $M_{\cap}$ at 0 together with \eqref{COROLLARY:RELATIVE_EINTERIOR01}. Since the set $\int_\Omega N\big(0;M(\omega)\big)d\mu(\omega)$ is convex, the relative interior formula \eqref{COROLLARY:RELATIVE_EINTERIOR02} follows from \eqref{COROLLARY:RELATIVE_EINTERIOR01} due to the classical fact of convex analysis.
\end{proof}\vspace*{0.08in}

Our further intention is to find verifiable conditions that allow us to drop the closure operation in the normal cone evaluations of type \eqref{COROLLARY:FUZZYEXPRESSION:EQUATION}. It is done below by using the extremal principle for measurable set-valued mappings established in Section~\ref{EXTREMALFORSETVALUED}. First we present the following lemma, which holds for general closed-valued measurable multifunctions.\vspace*{-0.05in}

\begin{lemma}[sequential optimality conditions for strict minimizers]\label{OPTCONDATZERO} Let $\bar{x}\in\R^n$ be a strict local minimizer of the optimization problem from Lemma~{\rm\ref{Fuzzyoptimalitycondition}}, and let $p\in(1,\infty)$. Then we have the alternative conditions:\\[1ex]
{\bf(i)} either there exist sequences of functions $x_k(\cdot)\in L^p(\O,\R^n)$ with $\|x_k(\cdot)-\ox\|_\infty\to 0$ and vectors $y_k\in\R^n$ with $\|y_k-\ox\|\to 0$ as $k\to\infty$ such that $0\in\Hat\partial h(y_k)+\int_{\Omega}\Hat N(x_k(\omega);M(\omega))d\mu(\omega)$, $k\in\N$;\\[1ex]
{\bf(ii)} or there exist sequences of functions $x_k(\cdot)$ and vectors $y_k$ as in {\rm(i)}, and also sequences of nonzero adjoint functions $x^*_k(\cdot)\in L^q(\O,\R^n)$ with $1/p+1/q=1$ and $x_k^*(\omega)\in\Hat N(x_k(\omega);M(\omega))$ for a.e.\ $\omega\in\O$ as well as vectors $u_k^*\in\Hat\partial^\infty h(y_k)$ such that $u_k^*+\int_{\Omega}x_k^\ast(\omega)d\mu(\omega)=0$ for all $k\in\N$.
\end{lemma}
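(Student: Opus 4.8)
The strategy is to reduce the statement to a penalization argument combined with Lemma~\ref{Fuzzyoptimalitycondition} and the sequential extremal principle of Theorem~\ref{fuzzy_extremal_prin}, with the dichotomy in (i)/(ii) arising from whether a certain multiplier stays bounded or blows up. First I would exploit strictness: since $\ox$ is a strict local minimizer of $h$ subject to $x\in M(\omega)$ a.e., there is $r>0$ so that $\ox$ is the unique solution in $\cl\mathbb{B}_r(\ox)$. For $k\in\N$ consider the auxiliary problem
\begin{equation*}
\mbox{minimize }\;h(x)+\frac{1}{2}\|x-\ox\|^2\;\mbox{ subject to }\;x\in M(\omega)\ \text{a.e.},\ x\in\cl\mathbb{B}_r(\ox),
\end{equation*}
or rather a sequence of relaxations in which the essential-intersection constraint is penalized by $k\int_\Omega d^p_{M(\omega)}(x)\,d\mu(\omega)$; the quadratic term makes $\ox$ a strict global minimizer on the ball and guarantees that approximate minimizers $\hx_k$ of the penalized functionals converge to $\ox$ with $h(\hx_k)\to h(\ox)$, by a standard argument as in Claim~2 of the proof of Theorem~\ref{fuzzy_extremal_prin} (Fatou's lemma plus uniqueness of $\ox$). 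This produces the vectors $y_k\to\ox$ and, via a measurable-selection step (\cite[Theorem~14.37]{rw}), the functions $x_k(\cdot)\in L^p(\Omega,\R^n)$ with $x_k(\omega)\in M(\omega)$ a.e.\ realizing the distances; the estimate analogous to \eqref{est} upgrades $\|x_k(\cdot)-\ox\|_p\to0$ to $\|x_k(\cdot)-\ox\|_\infty\to0$ because the penalization forces $\hx_k\to\ox$ in $\R^n$ and the selection stays within a shrinking ball.

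Next I would apply the sequential optimality conditions of Lemma~\ref{Fuzzyoptimalitycondition} (with this $p,q$) to the penalized problems, obtaining $z_k^*\in\Hat\partial h(z_k)$, $x_k^*(\omega)\in\Hat N(x_k(\omega);M(\omega))$ a.e., and the asymptotic stationarity $z_k^*+\int_\Omega x_k^*(\omega)\,d\mu(\omega)\to0$. The key dichotomy is driven by the quantity $t_k:=\|x_k^*\|_q$ (equivalently the size of the penalized multiplier). \textbf{Case 1:} if along a subsequence $t_k$ stays bounded, then after normalizing and passing to the limit in the asymptotic stationarity relation — using that $\Hat\partial h$ has closed graph in the relevant sense through $\partial h$, or more directly re-running the argument to get exact (not merely asymptotic) stationarity at each $k$ by a further Ekeland/penalization refinement — one lands in alternative (i): $0\in\Hat\partial h(y_k)+\int_\Omega\Hat N(x_k(\omega);M(\omega))\,d\mu(\omega)$. \textbf{Case 2:} if $t_k\to\infty$, divide the stationarity relation by $t_k$; then $\frac1{t_k}z_k^*\to0$ while $\frac1{t_k}\int_\Omega x_k^*(\omega)\,d\mu(\omega)$ has norm bounded away from zero, and the scaled normal $\frac1{t_k}x_k^*(\omega)$ — being a regular normal to $M(\omega)$ at $x_k(\omega)$ — remains a regular normal after scaling; meanwhile $\frac1{t_k}z_k^*$, as a scaled-down regular subgradient of $h$ at $y_k$ with coefficient tending to zero, yields in the limit (and, with a small perturbation, at each stage $k$) an element $u_k^*\in\Hat\partial^\infty h(y_k)$ via the epigraphical description \eqref{fs}--\eqref{ss}. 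This gives alternative (ii) with the nonzero adjoint functions $x_k^*(\cdot)$ and $u_k^*+\int_\Omega x_k^*(\omega)\,d\mu(\omega)=0$.

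\textbf{Main obstacle.} The delicate point is passing from the \emph{asymptotic} stationarity delivered by Lemma~\ref{Fuzzyoptimalitycondition} to the \emph{exact} identities ``$0\in\ldots$'' and ``$u_k^*+\int_\Omega x_k^*(\omega)\,d\mu(\omega)=0$'' required in (i) and (ii) for each fixed $k$. I expect this to be handled exactly as in Theorem~\ref{fuzzy_extremal_prin}: rather than invoking Lemma~\ref{Fuzzyoptimalitycondition} as a black box, one applies the Fermat rule together with Proposition~\ref{propositiondifconvinte} (the generalized Leibniz rule) directly to a penalized functional of the form $\int_\Omega\|x+a_k(\omega)-x_k(\omega)\|^p\,d\mu(\omega)+\delta_{\cl U}(x)$ augmented by $h$, producing \emph{exact} stationarity at the minimizer $\hx_k$ with a genuine $L^q$ multiplier; the dichotomy on $\|x_k^*\|_q$ is then whether the subgradient of $h$ is absorbed (case (i)) or must be rescaled to a singular subgradient (case (ii)). The measurability of all selections (of $x_k(\cdot)$, $x_k^*(\cdot)$, $u_k^*$) and the $L^p$/$L^\infty$ convergence control are routine but must be tracked carefully; the genuine analytic content is the blow-up analysis of the multiplier, which is where the extremal-principle machinery of Section~\ref{EXTREMALFORSETVALUED} does the work.
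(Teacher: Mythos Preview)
Your blow-up dichotomy does not deliver alternative (ii) as stated. If $z_k^\ast\in\Hat\partial h(y_k)$ and $t_k\to\infty$, then $(t_k^{-1}z_k^\ast,-t_k^{-1})\in\Hat N((y_k,h(y_k));\epi h)$, but this pair never has second component equal to $0$, so $t_k^{-1}z_k^\ast\notin\Hat\partial^\infty h(y_k)$ for any finite $k$. At best you obtain, after passing to the limit, an element of the \emph{limiting} singular subdifferential $\partial^\infty h(\ox)$ at $\ox$ itself; the lemma, however, demands $u_k^\ast\in\Hat\partial^\infty h(y_k)$ at each $k$, together with the exact equality $u_k^\ast+\int_\Omega x_k^\ast(\omega)\,d\mu(\omega)=0$. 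Your ``main obstacle'' paragraph proposes to fix the exactness by adding $h$ to the convex penalized functional and invoking Proposition~\ref{propositiondifconvinte}, but that proposition requires a convex normal integrand, and $h$ is merely l.s.c.; a fuzzy sum rule would again destroy exactness. So both the singular-subgradient inclusion and the exact identity in (ii) are genuinely out of reach along this route. There is also some confusion about what Lemma~\ref{Fuzzyoptimalitycondition} is being applied to: once the $M(\omega)$-constraint has been penalized away there is no constraint left for that lemma to act on.

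The paper avoids all of this by an epigraph lift. One augments $\Omega$ by a single atom $\omega_0$, sets $\Tilde M(\omega_0):=\epi h\subset\R^{n+1}$ and $\Tilde M(\omega):=M(\omega)\times(-\infty,h(\ox)]$ for $\omega\ne\omega_0$, and checks (using strict minimality for the nonoverlapping condition) that $\Tilde M$ is locally extremal at $(\ox,h(\ox))$ with perturbations $a_k(\omega)=-(0,k^{-1}\1_{\{\omega_0\}}(\omega))$. Theorem~\ref{fuzzy_extremal_prin} then yields, for each $k$, \emph{exact} relations
\[
(x_k^\ast(\omega_0),\alpha_k^\ast(\omega_0))+\int_\Omega(x_k^\ast(\omega),\alpha_k^\ast(\omega))\,d\mu(\omega)=0,\qquad \|(x_k^\ast,\alpha_k^\ast)\|_q=1,
\]
with $(x_k^\ast(\omega_0),\alpha_k^\ast(\omega_0))\in\Hat N((y_k,h(y_k));\epi h)$ and $\alpha_k^\ast(\omega_0)\le 0$. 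The dichotomy is then simply whether $\alpha_k^\ast(\omega_0)<0$ (divide by $|\alpha_k^\ast(\omega_0)|$ and land in $\Hat\partial h(y_k)$: case (i)) or $\alpha_k^\ast(\omega_0)=0$ (then $x_k^\ast(\omega_0)\in\Hat\partial^\infty h(y_k)$ by definition: case (ii)); in both cases the identity is exact at every $k$, and the choice of $a_k$ together with estimate \eqref{est} upgrades the $L^p$ convergence of $x_k(\cdot)$ to $L^\infty$. The key idea you are missing is this geometric reformulation of $h$ as an epigraphical set constraint, which is precisely what makes the exact, per-$k$ singular subgradient in (ii) fall out.
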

\begin{proof} $\;$ Assume without loss of generality that the measure $\mu$ is finite and pick $\omega_0\notin\Omega$. Then we construct a new measure space $(\Tilde\O,\Tilde{\cal A},\Tilde\mu)$ exactly as in the proof of Lemma~\ref{Fuzzyoptimalitycondition}. Define further the measurable multifunction $\Tilde M\colon\O\tto\R^{n+1}$ on the new measure space by
\begin{equation*}
\Tilde M(\omega):=\left\{\begin{array}{cc}
\epi h&\text{ if }\;\omega=\omega_0,\\
M(\omega)\times(-\infty,h(\bar x)]&\text{ if }\;\omega\ne\omega_0
\end{array}\right.
\end{equation*}
and take a neighborhood $V$ of $\bar x$ on which this vector is a unique minimizer in the optimization problem under consideration.

Let us check that the mapping $\Tilde M$ is locally extremal at the origin in $L^p(\Tilde\O,\R^n)$ in the sense of Definition~\ref{local_extremal_at}. Indeed, denoting $U:=V\times\R$ and considering the measurable functions $a_k(\omega):=-(0,k^{-1}\1_{\{\omega=\omega_0\}}(\omega))$ a.e., where $\1_{A}(\omega)$ is the characteristic function of the set $A$, i.e., it equals to 1 on $A$ and 0 outside of $A$. We have that $\|a_k\|_p=k^{-1} \to 0$. Moreover, we can verify that $\bigcap_{\omega\in\Tilde{\Omega}\text{ a.e.}}(\Tilde M(\omega)-a_k(\omega))\cap U=\emp$, $k\in\N$, which justifies the claim.

Next we show that the nonoverlapping condition \eqref{nonoverlapping} holds for $\Tilde M$ with $U$ defined above. Take any $(x,\alpha)\in\Tilde M_{\cap}\cap U$ and observe that  $\alpha\ge h(x)$ due to \REVPP{   $(x,\alpha)\in  M(\omega_0)=\epi h$.} On the other hand, we have $x\in M_{\cap}\cap U$ and $\alpha\le h(\bar{x})$. Since $\ox$ is a strict minimizer of our problem, it implies that $(x,\alpha)=(\bar{x},h(\bar{x}))$, which justifies \eqref{nonoverlapping}.

Applying now Theorem~\ref{fuzzy_extremal_prin} gives us sequences $(x_k,\alpha_k)\in L^p(\O,\R^{n+1})$ and $(x_k^*,\alpha_k^*)\in L^q(\O,\R^{n+1})$ such that $\|(x_k,\alpha_k)\|_p\to 0$ as $k\to\infty$, $(x_k^*(\omega),\alpha_k^*(\omega))\in\Hat N((x_k(\omega),\alpha_k(\omega));\Tilde M(\omega))$ a.e.\ on $\O$, and
\begin{align}
(x_k^*(\omega_0),\alpha^\ast_k(\omega_0))+\int_{\Omega}\big(x_k^*(\omega),\alpha^\ast_k(\omega)\big)d\mu(\omega)&=0,\label{EP01}\\
\|x_k^*(\omega_0)\|^q+\|\alpha_k^*(\omega_0)\|^q+\int_{\Omega}\big(\|x_k^*(\omega)\|^q+\|\alpha_k^*(\omega)\|^q\big)d\mu(\omega)&=1\label{EP02}.
\end{align}
Note that estimate \eqref{est} of Theorem~\ref{fuzzy_extremal_prin} ensures actually the stronger convergence $\|(x_k,\alpha_k)\|_\infty\to 0$ as $k\to\infty$ due to the above choice of the sequence $\{a_k(\omega)\}$ in the extremality definition.

It follows from the constructions of $\Tilde M$ that $x_k^*(\omega)\in\Hat N(x_k(\omega);M(\omega))$ and $\alpha^*_k(\omega)\ge 0$ a.e.\ on $\Omega$. Thus \eqref{EP01} yields $\alpha^*_k(\omega_0)\le 0$ for all $k\in\N$. Furthermore
\begin{align*}
\Hat N\big((x_k(\omega_0),h(x_k(\omega_0));\Tilde M(\omega_0)\big)=\Hat N\big((x_k(\omega_0),h(x_k(\omega_0));\epi h\big),\quad k\in\N.
\end{align*}
Supposing that $\alpha_k^*(\omega_0)=0$ for infinitely many $k$ gives us $u^*_k:=x^*_k(\omega_0)\in\Hat\partial^\infty h(y_k)$ with $y_k:=x_k(\omega_0)\to 0$ as $k\to\infty$. Using \eqref{EP01} and \eqref{EP02} yields
\begin{align*}
u_k^*+\int_{\Omega}x_k^*(\omega)d\mu(\omega)=0\;\mbox{ and }\;\|u_k^*\|^q+\int_{\Omega}\|x_k^*(\omega)\|^qd\mu(\omega)=1
\end{align*}
for all $k\in\N$, which verifies assertion (ii) in this case. In the remaining case where $\alpha^*_k(\omega_0)<0$ for infinitely many $k$ we get $|\alpha_k^*(\omega_0)^{-1}|x_k^*(\omega_0)\in\Hat\partial h(y_k)$. Then \eqref{EP01} readily ensures the fulfillment of assertion (i).
\end{proof}\vspace*{0.05in}

To proceed further with dismissing the closure operation in the normal cone representations by employing Lemma~\ref{OPTCONDATZERO}, we need some qualification conditions for measurable multifunctions. Let us introduce two of them in the case of arbitrary closed-valued multifunctions.\vspace*{-0.05in}

\begin{definition}[normal qualification conditions]\label{definitionNQC01} Let $M\colon\Omega\tto\R^n$ be a measurable multifunction with closed values. We say that:\\[1ex]
{\bf(i)} The {\em regular normal qualification condition} holds for $M$ at $\ox\in M_\cap$ if there exists $\epsilon>0$ such that for all $x(\omega)\in\B(\bar{x},\epsilon)$ with a.e.\ $\omega\in\O$ we have
\begin{align}\label{NORMALQUALIFICATIONCOND01}
\left[\int_{\Omega}x^*(\omega)d\mu(\omega)=0,\;x^*(\omega)\in\Hat N\big(x(\omega);M(\omega)\big)\right]\Longrightarrow\big[x^*(\omega)=0\big].
\end{align}	
{\bf(ii)} The {\em limiting normal qualification condition} holds for $M$ at $\ox\in M_\cap$:
\begin{align*}
\left[\int_{\Omega}x^*(\omega)d\mu(\omega)=0,\;x^*(\omega)\in N\big(\ox;M(\omega)\big)\text{ a.e. }\right]\Longrightarrow\big[x^*(\omega)=0\;\text{ a.e. }\big].
\end{align*}
\end{definition}\vspace*{-0.01in}
Both qualification conditions of Definition~\ref{definitionNQC01} are new, while the limiting one is a natural extension of that in \cite[Definition~3.11]{mp1} and \cite[Definition~8.69]{m18} given for countably many sets, which extends in turn the standard normal qualification condition of variational analysis \cite{m06,m18,rw} for finite systems.

It is easy to see that the limiting qualification condition in Definition~\ref{definitionNQC01}(ii) implies the regular one in (i) if the set $\O$ is finite. It also happens when $M$ is {\em cone-valued} and $\ox=0$. Indeed, we can deduce the latter directly from the second inclusion of Proposition~\ref{inclusionpostivehomogeneous}.\vspace*{0.05in}

Let us present useful sufficient conditions for the validity of the limiting normal qualification condition from Definition~\ref{definitionNQC01}(ii) that also imply the one in \eqref{NORMALQUALIFICATIONCOND01} in the conic case of our main interest in this section.\vspace*{-0.05in}

\begin{proposition}[sufficient conditions for normal qualification]\label{suff-nc} Let $M\colon\Omega\tto\R^n$ be a measurable multifunction with closed values, let $\ox\in M_\cap$, and let $\bigcap_{\omega\in\Omega \text{ a.e.}}{\rm int}(M(\omega))\ne\emp$. Assume that either $M$ is convex-valued, or the sets $M(\omega)$ are cones which are normally regular at $\ox=0$ for a.e.\ $\omega\in\Omega$. Then $M(\cdot)$ satisfies the limiting normal qualification condition at $\ox$.
\end{proposition}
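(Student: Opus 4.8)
The plan is to exploit the interior point hypothesis to produce a uniform separation/estimate that forces any feasible adjoint selection $x^*(\cdot)$ with $\int_\Omega x^*(\omega)d\mu(\omega)=0$ and $x^*(\omega)\in N(\ox;M(\omega))$ a.e.\ to vanish a.e. First I would pick $\bar u\in\bigcap_{\omega\in\Omega\text{ a.e.}}\inter M(\omega)$, so that there is $r>0$ with $\B_r(\bar u)\subset M(\omega)$ for a.e.\ $\omega$. In the convex-valued case, $N(\ox;M(\omega))$ coincides with the normal cone of convex analysis, and for $v^*\in N(\ox;M(\omega))$ the defining inequality $\langle v^*,x-\ox\rangle\le 0$ for all $x\in M(\omega)$ applied to $x=\bar u+ r z$ with $\|z\|\le 1$ yields $\langle v^*,\bar u-\ox\rangle + r\|v^*\|\le 0$, i.e.\ $r\|v^*\|\le \langle v^*,\ox-\bar u\rangle$. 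Thus for a.e.\ $\omega$ and every $x^*(\omega)\in N(\ox;M(\omega))$ we have $r\|x^*(\omega)\|\le\langle x^*(\omega),\ox-\bar u\rangle$.

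Next I would integrate this pointwise inequality over $\Omega$. Since $x^*\in L^q(\Omega,\R^n)\subset L^1$ (the measure is finite), the right-hand side integrates to $\big\langle\int_\Omega x^*(\omega)d\mu(\omega),\,\ox-\bar u\big\rangle$, which is $0$ by the assumed constraint. Hence $r\int_\Omega\|x^*(\omega)\|d\mu(\omega)\le 0$, and since $r>0$ this forces $\|x^*(\omega)\|=0$ for a.e.\ $\omega\in\Omega$, which is exactly the conclusion of the limiting normal qualification condition at $\ox$.

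For the second alternative — $M(\omega)$ closed cones, normally regular at $0$ — the argument is the same once one observes that normal regularity gives $N(0;M(\omega))=\Hat N(0;M(\omega))$, and for a convex cone $K$ the regular normal cone at $0$ is the polar $K^\ast=\{v^*\mid\langle v^*,x\rangle\le 0\ \forall x\in K\}$; more precisely, since $M(\omega)$ is a cone containing the ball $\B_r(\bar u)$, it contains the generated cone $\cone\B_r(\bar u)$, and one checks that $\langle v^*,x\rangle\le 0$ for all such $x$ again yields $r\|v^*\|\le\langle v^*,-\bar u\rangle$; here $\ox=0$ so the integral of the right-hand side is $\langle\int_\Omega x^*\,d\mu,-\bar u\rangle=0$, and the same contradiction closes the proof. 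Finally, as already noted in the text preceding the proposition, in the conic case with $\ox=0$ the second inclusion of Proposition~\ref{inclusionpostivehomogeneous} shows that the limiting normal qualification condition implies the regular one \eqref{NORMALQUALIFICATIONCOND01}, so no separate argument is needed there.

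The main obstacle I anticipate is purely measure-theoretic bookkeeping: justifying that the selection $x^*(\cdot)$ is genuinely integrable so that Fubini/monotonicity of the integral applies to the pointwise inequality $r\|x^*(\omega)\|\le\langle x^*(\omega),\ox-\bar u\rangle$, and making sure the a.e.\ set where $\B_r(\bar u)\subset M(\omega)$ has full measure (this is exactly the meaning of $\bigcap_{\omega\in\Omega\text{ a.e.}}\inter M(\omega)\ne\emp$). Once integrability is in hand, the separation estimate is elementary and the conclusion is immediate; the cone case requires only the extra remark that normal regularity lets one replace $N$ by $\Hat N$ and that $\Hat N(0;K)$ for a convex cone $K$ is its polar.
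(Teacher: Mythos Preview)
Your overall strategy is sound and close in spirit to the paper's, but there is one genuine gap: the uniform radius $r>0$ with $\B_r(\bar u)\subset M(\omega)$ for a.e.\ $\omega$ is \emph{not} guaranteed by the hypothesis, and your parenthetical ``this is exactly the meaning of $\bigcap_{\omega\in\Omega\text{ a.e.}}\inter M(\omega)\ne\emp$'' is incorrect. That hypothesis only gives, for a.e.\ $\omega$, an $\omega$-dependent radius $r_\omega>0$ with $\B_{r_\omega}(\bar u)\subset M(\omega)$; a uniform $r$ need not exist (take $\Omega=(0,1)$ with Lebesgue measure, $M(\omega)=[-\omega,1]\subset\R$, $\ox=1$, $\bar u=0$). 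With only $r_\omega$ available, your integrated inequality $r\int_\Omega\|x^*(\omega)\|\,d\mu(\omega)\le\big\langle\int_\Omega x^*(\omega)\,d\mu(\omega),\,\ox-\bar u\big\rangle$ collapses as written, and the same defect recurs in your cone argument.

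The repair is short and essentially recovers the paper's proof. From your pointwise estimate with $r_\omega$ in place of $r$ you obtain $0\le r_\omega\|x^*(\omega)\|\le\langle x^*(\omega),\ox-\bar u\rangle$ for a.e.\ $\omega$, so the integrable function $\omega\mapsto\langle x^*(\omega),\ox-\bar u\rangle$ is a.e.\ nonnegative while its integral equals $\big\langle\int_\Omega x^*\,d\mu,\,\ox-\bar u\big\rangle=0$; hence it vanishes a.e., and then $r_\omega\|x^*(\omega)\|\le 0$ forces $x^*(\omega)=0$ a.e. This is precisely the paper's route: it first shows $\langle x^*(\omega),\bar u-\ox\rangle=0$ a.e.\ (via an $A$/$A^c$ splitting equivalent to the nonnegativity-plus-zero-integral observation) and only then invokes the pointwise interior ball $\B_{r_\omega}(\bar u)\subset M(\omega)$ to conclude $x^*(\omega)=0$. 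The cone case goes through identically once you use $r_\omega$. A minor aside: Definition~\ref{definitionNQC01}(ii) does not place $x^*$ in $L^q$, so the relevant hypothesis to invoke is simply integrability of $x^*$ (needed for the integral to vanish), not the inclusion $L^q\subset L^1$.
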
\vspace*{-0.05in}
\begin{proof} $\;$ Considering the case of convex-valued mappings, take $x^*(\omega)\in N(\ox;M(\omega))$ with $\int_{\Omega}x^*(\omega)d\mu(\omega)=0$. Fix any $x\in\bigcap_{\omega\in\Omega \text{ a.e.}}{\rm int}(M(\omega))$ and $A\in\mathcal{A}$ and then get by the convexity of $M(\omega)$ that
\begin{align*}
0\ge\int_{A}\big\langle x^*(\omega),x-\ox\big\rangle d\mu(\omega)=-\int_{A^c}\big\langle x^*(\omega),x-\ox\big\rangle d\mu(\omega)\ge 0,
\end{align*}
where $A^c$ stands for the complement of $A$. Since $A\in\mathcal{A}$ was chosen arbitrarily, it shows that $\langle x^*(\omega),x-\ox\rangle=0$ for almost all $\omega\in\O$.

For any set $A\in\mathcal{A}$ with $\mu(A)=0$ and for any $\omega\in\Hat{\Omega}:=\Omega\backslash A$ with $x\in{\rm int}M(\omega)$ we get the relationships
\begin{equation*}
\langle x^*(\omega),x-\ox\rangle=0\;\mbox{ and }\;x^*(\omega)\in N\big(\ox;M(\omega)\big).
\end{equation*}
Furthermore, for each selected $\omega$ there exists a number $r_{\omega}>0$ such that $\mathbb{B}(x,r_{\omega})\subset M(\omega)$. Thus
$\langle x^*(\omega),h\rangle\le\langle x^*(\omega),\ox-x\rangle=0$ whenever $h\in\mathbb{B}(0,r_{\omega})$, which implies in turn that $x^*(\omega)=0$ for almost all $\omega\in\Omega$ and hence verifies the claimed normal regularity for convex-valued mappings. The proof for the  case of cone-valued multifunctions is similar.
\end{proof}\vspace*{0.05in}

Finally in this section, we derive desired representations of the regular normal cone to essential intersections and its interior without using the closure operation. The first part of this theorem holds for general measurable mappings, while the second one addresses cone-valued multifunctions.\vspace*{-0.05in}

\begin{theorem}[normal cone formulas without closure]\label{THEOREM:EXACTINTEGRALREPRESENTATION01} Let $M\colon\Omega\tto\R^n$ be a measurable multifunction with closed values satisfying the regular normal qualification condition \eqref{NORMALQUALIFICATIONCOND01} at $\ox\in M_\cap$. Then the following hold:\\[1ex]
{\bf(i)} Take any $x^*\in\Hat N(\ox;M_\cap)$ for which there is $\epsilon>0$ with
\begin{equation}\label{str}
\langle x^*,x-\bar{x}\rangle<0\;\mbox{ whenever }\;x\in M_\cap\cap\mathbb{B}(\bar{x},\epsilon)\backslash\{\bar{x}\}.
\end{equation}
Then there exists a measurable selection $x(\omega)\in M(\omega)\cap\mathbb{B}(\bar{x},\epsilon)$ such that
\begin{align}\label{InclusionFrechet}
x^*\in\int_{\Omega}\Hat N\big(x(\omega);M(\omega)\big)d\mu(\omega).
\end{align}
{\bf(ii)} If the values of $M(\cdot)$ are closed cones, then we have
\begin{align}\label{Corollary_eq02}
{\rm int}\Hat N(0;M_{\cap})\subset\int_{\Omega}N\big(0;M(\omega)\big)d\mu(\omega).
\end{align}
\end{theorem}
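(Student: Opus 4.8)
The plan is to prove both parts using the sequential optimality conditions for strict minimizers (Lemma~\ref{OPTCONDATZERO}) together with the regular normal qualification condition, following the standard pattern of removing closures in normal-cone calculus.

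For part \textbf{(i)}: Given $x^*\in\Hat N(\ox;M_\cap)$ satisfying the strict inequality \eqref{str}, I would first observe that $\ox$ is a \emph{strict} local minimizer of the function $h(x):=-\langle x^*,x-\ox\rangle$ subject to $x\in M(\omega)$ a.e.\ and $x\in\mathbb{B}(\ox,\epsilon)$; the strict inequality in \eqref{str} guarantees strictness. Applying Lemma~\ref{OPTCONDATZERO} gives the alternative: since $h$ is smooth with $\partial^\infty h(y_k)=\{0\}$, alternative (ii) would force $\int_\Omega x_k^*(\omega)d\mu(\omega)=0$ with $x_k^*(\omega)\in\Hat N(x_k(\omega);M(\omega))$ a.e.\ and $x_k^*$ nonzero, which is \emph{exactly} what the regular normal qualification condition \eqref{NORMALQUALIFICATIONCOND01} rules out (here $\|x_k(\cdot)-\ox\|_\infty\to 0$ puts the $x_k(\omega)$ eventually inside $\mathbb{B}(\ox,\epsilon)$). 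Hence alternative (i) must hold: there are $x_k(\cdot)\in L^p(\Omega,\R^n)$ with $\|x_k(\cdot)-\ox\|_\infty\to 0$ and $y_k\to\ox$ with
$$0\in\nabla h(y_k)+\int_\Omega\Hat N(x_k(\omega);M(\omega))d\mu(\omega),$$
i.e.\ $x^*=-\nabla h(y_k)\in\int_\Omega\Hat N(x_k(\omega);M(\omega))d\mu(\omega)$ for each large $k$. Picking any such $k$ with $x_k(\omega)\in\mathbb{B}(\ox,\epsilon)$ for a.e.\ $\omega$ yields the desired measurable selection $x(\omega):=x_k(\omega)$ and inclusion \eqref{InclusionFrechet}.

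For part \textbf{(ii)}: Here the values are closed cones and $\ox=0$. Take $x^*\in\inter\Hat N(0;M_\cap)$. The point is that the open-ness lets me produce the strict inequality needed to invoke part (i): since $x^*$ is interior to the regular normal cone, there is $\delta>0$ with $x^*+\delta\mathbb{B}\subset\Hat N(0;M_\cap)$, and a standard argument shows $\langle x^*,x\rangle\le-\delta\|x\|$ for all $x\in M_\cap$ near $0$ (indeed $x^*+\delta\frac{x}{\|x\|}\in\Hat N(0;M_\cap)$ gives $\langle x^*+\delta\frac{x}{\|x\|},x\rangle\le o(\|x\|)$, hence $\langle x^*,x\rangle\le-\delta\|x\|+o(\|x\|)$). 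This delivers the strict condition \eqref{str} for some smaller $\epsilon>0$, so part (i) applies and produces a measurable selection $x(\omega)\in M(\omega)\cap\mathbb{B}(0,\epsilon)$ with $x^*\in\int_\Omega\Hat N(x(\omega);M(\omega))d\mu(\omega)$. Finally, since each $M(\omega)$ is a cone, Proposition~\ref{inclusionpostivehomogeneous} gives $\Hat N(x(\omega);M(\omega))\subset N(0;M(\omega))$ a.e., so $x^*\in\int_\Omega N(0;M(\omega))d\mu(\omega)$, which is \eqref{Corollary_eq02}.

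The main obstacle I anticipate is the careful bookkeeping in part (i) when applying Lemma~\ref{OPTCONDATZERO}: one must confirm that the $L^\infty$-convergence $\|x_k(\cdot)-\ox\|_\infty\to 0$ ensures $x_k(\omega)\in\mathbb{B}(\ox,\epsilon)$ for almost every $\omega$ once $k$ is large, so that the regular normal qualification condition (which is only assumed for arguments in $\mathbb{B}(\ox,\epsilon)$) genuinely contradicts alternative (ii). A secondary subtlety is verifying that the localization via the extra indicator $\delta_{\mathbb{B}(\ox,\epsilon)}$ does not interfere — on the open ball its regular subdifferential vanishes, so $h$ effectively stays smooth along the minimizing sequence; but this should be handled by shrinking $\epsilon$ at the outset so that all constructions stay strictly interior.
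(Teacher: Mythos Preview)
Your proof is correct and follows essentially the same route as the paper: both recognize from \eqref{str} that $\ox$ is a strict local minimizer of $h(x)=-\langle x^*,x\rangle$ over $M_\cap$, apply Lemma~\ref{OPTCONDATZERO}, eliminate alternative (ii) via \eqref{NORMALQUALIFICATIONCOND01}, and for part (ii) manufacture the strict inequality from the interior assumption before invoking Proposition~\ref{inclusionpostivehomogeneous}. One small refinement in your parenthetical for (ii): since $M_\cap$ is itself a cone (being an essential intersection of cones), the regular normal cone at $0$ equals the polar cone, so $\langle x^*+\delta x/\|x\|,\,x\rangle\le 0$ holds \emph{exactly} for every $x\in M_\cap$, not merely up to an $o(\|x\|)$ term---this is how the paper argues and it sidesteps any worry about the $o$-term depending on the (moving) vector $x^*+\delta x/\|x\|$.
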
\vspace*{-0.05in}
\begin{proof} $\;$ To verify (i), observe that for the vector $x^*$ satisfying the assumptions therein we get that the function $h(x):=-\la x^*,x\ra$ attains its strict local minimum at $\ox$ subject to the constraints $x\in M(\omega)$ a.e.\ on $\O$. Then Lemma~\ref{OPTCONDATZERO} gives us the two alternative conditions. It is easy to see that the second among them is ruled out by the imposed regular normal qualification condition \eqref{NORMALQUALIFICATIONCOND01}. Thus we arrive at the necessary condition in Lemma~\ref{OPTCONDATZERO}(i), which reduces to inclusion \eqref{InclusionFrechet} in the case of the selected function $h(x)$.

To verify now assertion (ii), we show first that the inclusion
\begin{align}\label{Corollary_eq01}
{\rm int}\Hat N(0;M_{\cap})\subset\bigcap_{\epsilon>0}\bigcup_{x\in L^\infty(\O,\B(0,\epsilon))}\int_{\Omega} \Hat N\big(x(\omega);M(\omega)\big)d\mu(\omega)
\end{align}
holds for any closed cone-valued measurable multifunction. To proceed, pick any $\epsilon>0$ and $x^*\in{\rm int}\Hat N(0;M_{\cap})$, and then find $r>0$ such that $\mathbb{B}(x^*,r)\subset\Hat N(0;M_{\cap})$. Fixing $x\in M_\cap\backslash\{\bar{x}\}$ and defining $u^*:=r\frac{x}{\|x\|}$, we get from the above constructions that $x^*+u^*\in\Hat N(0;M_{\cap})$ and therefore
\begin{equation*}
\langle x^*,x\rangle=\langle x^*+u^*,x\rangle-\langle u^*,x\rangle\le-r\|x\|<0.
\end{equation*}
It tells us that \eqref{str} holds, which yields \eqref{InclusionFrechet} with some measurable selection $x(\omega)\in \B(\bar{x},\epsilon)$ a.e.\ on $\Omega$ by assertion (i) established above. This clearly justifies \eqref{Corollary_eq01}. Furthermore, it follows from Proposition~\ref{inclusionpostivehomogeneous} that
\begin{equation}\label{eqnew}
\Hat N\big(x(\omega);M(\omega)\big)\subset N\big(0;M(\omega)\big)\mbox{\text{ a.e.  on } }\;\omega\in\O
\end{equation}
due to the cone-valuedness assumption on $M(\cdot)$. Hence \eqref{eqnew} implies that the right-hand side of \eqref{Corollary_eq01} is included in the right-hand side of \eqref{Corollary_eq02}, and thus we complete the proof of the theorem.
\end{proof}\vspace*{0.05in}	

Note that assertion (ii) of Theorem~\ref{THEOREM:EXACTINTEGRALREPRESENTATION01} is a counterpart of formula \eqref{COROLLARY:RELATIVE_EINTERIOR02} in Corollary~\ref{COROLLARY:RELATIVE_EINTERIOR} obtained without imposing {\em any regularity condition}.\vspace*{-0.17in}

\section{Normals to essential intersections via CHIP}\label{chip}\vspace*{-0.1in}

In this section we extend the major normal cone formulas obtained in Section~\ref{SectionIntegralRepresentation} for cone-valued multifunctions to a general class of closed-valued multifunctions on measure spaces. To furnish this, we first introduce and investigate the so-called {\em CHIP} (conical hull intersection property) for measurable multifunctions, which has been studied in the literature under this name for the classes of finitely many convex sets (see, e.g., \cite{bbl,et} and the references therein) as well as countably many convex \cite{lnp} and nonconvex \cite{m18,mp2} sets.\vspace*{-0.05in}

\begin{definition}[CHIP for measurable multifunctions]\label{CHIPFORMEASURABLESET} Let $M\colon\Omega\tto\R^n$ be a measurable multifunction on the measure space $(\Omega,\mathcal{A},\mu)$. We say that the {\em measurable CHIP} ({\em conical hull intersection property}) holds for $M(\cdot)$ with respect to $(\Omega,\mathcal{A},\mu)$ at $\ox\in M_\cap$ if
\begin{align}\label{equation_09}
T(\ox;M_{\cap})=\bigcap_{\omega\in\Omega \text{ a.e.}}T\big(\ox;M({\omega})\big).
\end{align}
When no confusion arises about the measure space, we simple say that the {\em measurable CHIP} holds for $M(\cdot)$ at $\ox$.
\end{definition}\vspace*{-0.07in}

It is important to mention that CHIP holds automatically for multifunctions with closed cone values at the origin.

Let us present some sufficient conditions for the fulfillment of CHIP. The following new property postulates a certain uniformity over the set of tangential directions. Having in mind applications to semi-infinite programming in Section~\ref{AplSIP}, we consider below arbitrary index sets, not just measure spaces.\vspace*{-0.05in}

\begin{definition}[tangential stability]\label{stab} We say that a set $\Th\subset\R^n$ is {\em tangentially stable} at $\bar{x}\in\Th$ with respect to some $U\subset\R^n$ if
\begin{align}\label{stabeq}
T(\bar{x};\Th)\cap U\subset(\Th-\bar{x}).
\end{align}
A family of sets $\{\Th_t\}_{t\in\mathcal{T}}\subset\R^n$ is {\em uniformly tangentially stable} at a common point $\bar{x}$ if there exists an (open) neighborhood $U$ of zero such that the sets $\Th_t$ are tangentially stable at $\bar{x}$ with respect to $U$ for all $t\in\mathcal{T}$. In the case where $\mathcal{T}$ is a measure space, $\{\Th_t\}_{t\in\mathcal{T} }$ is (uniformly) {\em almost everywhere tangentially stable} at $\bar{x}$ provided that the previous property holds for almost all $t\in\mathcal{T}$.
\end{definition}\vspace*{-0.03in}

Note that the tangential stability property \eqref{stabeq} holds for $\Th$ at $\ox$ if either $\ox\in{\rm int}\Th$, or $\Th$ is a cone with $\ox=0$, or $\Th$ is the complement of an open convex set. The next lemma establishes the validity of CHIP under the uniform tangential stability of set systems.\vspace*{-0.05in}

\begin{lemma}[tangential stability implies CHIP]\label{lemmaCHIP} Consider a family of closed sets $\{\Th_t\}_{t\in\mathcal{T}}$ with $\bar{x}\in\bigcap_{t\in\mathcal{T}}\Th_t$ and assume that the system $\{\Th_t\}_{t\in\mathcal{T}}$ is uniformly tangentially stable at $\bar{x}$. Then we have
\begin{align}\label{stab1}
T\Big(\bar{x};\bigcap_{t\in\mathcal{T}}\Th_t\Big)=\bigcap_{t\in\mathcal{T}}T(\bar{x};\Th_t).
\end{align}
If ${\cal T}$ is a measure space and the family $\{\Th_t\}_{t\in\mathcal{T}}$ is almost everywhere tangentially stable at $\ox$, then \eqref{stab1} holds with $t\in{\cal T}$ a.e.\ therein.
\end{lemma}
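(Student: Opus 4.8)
The plan is to prove the two inclusions in \eqref{stab1} separately, with the nontrivial one being ``$\supset$'' obtained from the uniform tangential stability hypothesis. The inclusion $T\big(\bar x;\bigcap_{t}\Th_t\big)\subset\bigcap_{t}T(\bar x;\Th_t)$ is elementary and requires no hypotheses: if $v\in T\big(\bar x;\bigcap_t\Th_t\big)$, pick $\tau_j\dn 0$ and $v_j\to v$ with $\bar x+\tau_j v_j\in\bigcap_t\Th_t$; then for each fixed $t$ the same sequences witness $v\in T(\bar x;\Th_t)$, so the intersection contains $v$. This step I would dispose of in one or two lines.

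For the reverse inclusion $\bigcap_{t\in\mathcal T}T(\bar x;\Th_t)\subset T\big(\bar x;\bigcap_t\Th_t\big)$, let $U$ be the open neighborhood of zero furnished by Definition~\ref{stab}, so that $T(\bar x;\Th_t)\cap U\subset\Th_t-\bar x$ for every $t\in\mathcal T$. Take $v\in\bigcap_{t}T(\bar x;\Th_t)$; we may assume $v\ne 0$. The key observation is that tangent cones are cones, so for every $\tau>0$ we have $\tau v\in T(\bar x;\Th_t)$ for all $t$. Choosing $\bar\tau>0$ small enough that $\tau v\in U$ for all $\tau\in(0,\bar\tau]$ (possible since $U$ is an open neighborhood of the origin), we get $\tau v\in T(\bar x;\Th_t)\cap U\subset\Th_t-\bar x$ for every $t\in\mathcal T$ and every such $\tau$; hence $\bar x+\tau v\in\bigcap_{t\in\mathcal T}\Th_t$ for all $\tau\in(0,\bar\tau]$. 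Taking any sequence $\tau_j\dn 0$ with $\tau_j\le\bar\tau$ and the constant sequence $v_j\equiv v$ now shows directly that $v\in T\big(\bar x;\bigcap_t\Th_t\big)$. This completes the equality \eqref{stab1}.

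The main obstacle, such as it is, is purely bookkeeping: one must make sure the radius $\bar\tau$ of the ray segment on which $v$ stays inside $U$ is chosen \emph{independently of $t$}, which is exactly what the \emph{uniform} tangential stability hypothesis buys us (the neighborhood $U$ is common to all $t\in\mathcal T$). Without uniformity the argument would break, since a $t$-dependent segment length could shrink to zero. For the measure-space addendum, the identical argument applies verbatim after discarding a fixed $\mu$-null set $N$ outside of which tangential stability with respect to the common $U$ holds; one replaces $\bigcap_{t\in\mathcal T}$ by $\bigcap_{t\in\mathcal T\setminus N}$ throughout, and notes that the elementary inclusion ``$\subset$'' also only uses membership for $t$ a.e. Thus \eqref{stab1} holds with ``$t\in\mathcal T$ a.e.'' in both intersections, which in particular yields the measurable CHIP of Definition~\ref{CHIPFORMEASURABLESET} when $\mathcal T=\Omega$ and $\Th_\omega=M(\omega)$.
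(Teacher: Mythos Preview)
Your argument is correct and reaches the same conclusion, but the route differs from the paper's. The paper also notes that only the inclusion ``$\supset$'' needs work, but then argues via tangent-cone calculus: since $\bigcap_t T(\bar x;\Th_t)$ is a closed cone, it equals its own tangent cone at $0$; since the tangent cone at $0$ is local, it agrees with the tangent cone of $\big(\bigcap_t T(\bar x;\Th_t)\big)\cap U$; tangential stability then gives $\big(\bigcap_t T(\bar x;\Th_t)\big)\cap U\subset\bigcap_t(\Th_t-\bar x)$, and monotonicity plus translation invariance of the tangent cone finish the job.

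Your approach is more elementary: rather than invoking the identities $T(0;K)=K$ for closed cones and $T(0;K)=T(0;K\cap U)$ for neighborhoods, you scale $v$ into $U$ and read off directly that a whole ray segment $\bar x+(0,\bar\tau]v$ lies in $\bigcap_t\Th_t$, which immediately certifies tangency by definition. This has the minor bonus of showing that every tangent direction to the intersection is in fact realized by an exact ray, not just a limiting one. The paper's version is slightly more compact and highlights the conical structure, while yours makes the role of the uniform neighborhood $U$ (your remark about $\bar\tau$ being independent of $t$) more transparent. Both handle the measure-space variant the same way, by discarding a null set.
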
\vspace*{-0.05in}
\begin{proof} $\;$ It is sufficient to verify the nontrivial inclusion ``$\supset$" in \eqref{stab1}. By the assumed uniform tangential stability of $\{\Th_t\}$ we can take an open neighborhood $U$ of zero such that \eqref{stabeq} holds. It clearly yields, by taking into account that the set $\bigcap_{t\in\mathcal{T}}T(\bar{x};\Th_t)$ is a cone, the relationships
\begin{equation*}
\begin{aligned}
\bigcap_{t\in\mathcal{T}}T(\bar{x};\Th_t)&=T\left(0;\bigcap_{t\in\mathcal{T}}T(\bar{x};\Th_t)\right)=T\left(0;\left(\bigcap_{t\in\mathcal{T}}T(\bar{x};\Th_t)\right)\bigcap U\right)\\&\subset T\left(0;\bigcap_{t\in\mathcal{T}}\left(\Th_t-\bar{x}\right)\right)=T\left(\bar{x};\bigcap_{t\in\mathcal{T}}\Th_t\right),
\end{aligned}
\end{equation*}
which ensure in turn the claimed CHIP of the family $\{\Th_t\}_{t\in{\cal T}}$.
\end{proof}\vspace*{0.05in}

The following consequence of Lemma~\ref{lemmaCHIP} is useful for applications to optimization problems with inequality constraints.\vspace*{-0.05in}

\begin{corollary}[CHIP for infinite inequality systems]\label{PropositionCHIP} Let $\Th_t:=\{x\in\mathbb{R}^n|\;f(t,x)\le 0\}$ with an arbitrary index set ${\cal T}$, where $f(t,x):=\la a(t),x\ra-b(t)$ with $a\colon\mathcal{T}\to\mathbb{R}^n$ and $b\colon\mathcal{T}\to\R$. Taking $\ox\in\R^n$ and the collection of active indexes ${\cal T}_f:=\{t\in\mathcal{T}|\;f(t,\ox)=0\}$, suppose that $\bar{x}\in{\rm int}\bigcap_{t\in T\backslash\mathcal{T}_f}\{x\in\R^n|\;f(t,x)<0\}$. Then we have that CHIP \eqref{stab1} holds for $\{\Th_t\}_{t\in{\cal T}}$ at $\ox$. If ${\cal T}$ is a measure space with $a(\cdot)$ and $b(\cdot)$ being measurable on it, then the measurable CHIP is satisfied with $t\in{\cal T}$ a.e.\ therein.
\end{corollary}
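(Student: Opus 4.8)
The plan is to deduce Corollary~\ref{PropositionCHIP} directly from Lemma~\ref{lemmaCHIP} by verifying its hypothesis, namely that the family $\{\Th_t\}_{t\in\mathcal{T}}$ is uniformly tangentially stable at $\ox$. Since each $\Th_t=\{x\mid\la a(t),x\ra\le b(t)\}$ is a closed half-space (or all of $\R^n$ when $a(t)=0$), it is in particular closed and convex, so both hypotheses of Lemma~\ref{lemmaCHIP} other than uniform tangential stability are immediate, and the measure-space addendum about measurability of $a(\cdot)$, $b(\cdot)$ is exactly what makes $t\mapsto T(\ox;\Th_t)$ a measurable multifunction so that "a.e." has meaning.

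The heart of the argument is to split the index set as $\mathcal{T}=\mathcal{T}_f\cup(\mathcal{T}\setminus\mathcal{T}_f)$ and treat the two pieces differently. For inactive indices $t\in\mathcal{T}\setminus\mathcal{T}_f$ we have $f(t,\ox)<0$, so $\ox\in{\rm int}\,\Th_t$; by the remark preceding Lemma~\ref{lemmaCHIP} such a $\Th_t$ is tangentially stable at $\ox$ with respect to \emph{any} neighborhood of zero, and moreover $T(\ox;\Th_t)=\R^n$ so these indices contribute nothing to either side of \eqref{stab1}. For active indices $t\in\mathcal{T}_f$ the set $\Th_t-\ox=\{x\mid\la a(t),x\ra\le 0\}$ is already a closed cone, hence tangentially stable at $\ox$ with respect to $U=\R^n$ (again by the cited remark: a cone is tangentially stable at its vertex), and in fact $T(\ox;\Th_t)=\Th_t-\ox$ so \eqref{stabeq} holds with equality. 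Combining, every $\Th_t$ is tangentially stable at $\ox$ with respect to a common neighborhood of zero — one can simply take $U=\R^n$ — which is precisely uniform tangential stability. Lemma~\ref{lemmaCHIP} then yields \eqref{stab1}, and in the measurable setting its second assertion yields the measurable CHIP with $t\in\mathcal{T}$ a.e.

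The mildly delicate point — really the only place the extra hypothesis $\ox\in{\rm int}\bigcap_{t\in\mathcal{T}\setminus\mathcal{T}_f}\{f(t,\cdot)<0\}$ is used — is to make sure the inactive indices do not spoil the nonoverlapping/uniformity bookkeeping: one must check that $\bigcap_{t\in\mathcal{T}}\Th_t$ has the same tangent cone at $\ox$ as $\bigcap_{t\in\mathcal{T}_f}\Th_t$, for which the assumed interiority guarantees a fixed ball $\mathbb{B}(\ox,\rho)\subset\Th_t$ for all inactive $t$, so those constraints are locally inactive and may be dropped when computing $T(\ox;\cdot)$. Once this reduction is in place, the uniform tangential stability is trivially satisfied on the reduced family (all cones), and Lemma~\ref{lemmaCHIP} closes the argument. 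I expect no genuine obstacle here; the proof is essentially a verification that the hypotheses of Lemma~\ref{lemmaCHIP} are met, with the only care needed being the treatment of inactive constraints via the interiority assumption.
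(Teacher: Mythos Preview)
Your overall strategy---reduce to Lemma~\ref{lemmaCHIP} by verifying uniform tangential stability---is exactly the paper's. However, the argument in your second paragraph contains a genuine error. For an inactive index $t$ one has $T(\ox;\Th_t)=\R^n$, so condition \eqref{stabeq} reads $\R^n\cap U\subset\Th_t-\ox$, i.e.\ $U\subset\Th_t-\ox$. This is \emph{not} satisfied for an arbitrary neighborhood $U$ of zero, and certainly not for $U=\R^n$ unless $\Th_t=\R^n$. Hence your claims that inactive constraints are tangentially stable ``with respect to any neighborhood of zero'' and that ``one can simply take $U=\R^n$'' for the full family are false.

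Your third paragraph does contain a correct route, but it is a \emph{different} argument from the one in your second paragraph: first use the interiority hypothesis to get a ball $\mathbb{B}_\rho(\ox)\subset\Th_t$ for every inactive $t$, conclude that the full intersection and the active intersection have the same tangent cone at $\ox$ and that $T(\ox;\Th_t)=\R^n$ for inactive $t$, and \emph{then} apply Lemma~\ref{lemmaCHIP} only to the active subfamily $\{\Th_t\}_{t\in\mathcal{T}_f}$, for which $U=\R^n$ indeed works since each $\Th_t-\ox$ is a cone. The paper takes the alternative path of keeping the full family and choosing $U:={\rm int}\big(\bigcap_{t\notin\mathcal{T}_f}\{f(t,\cdot)<0\}-\ox\big)$; this $U$ is a nonempty open neighborhood of zero precisely by the interiority hypothesis, satisfies $U\subset\Th_t-\ox$ for every inactive $t$, and imposes no constraint for active $t$ since there $T(\ox;\Th_t)=\Th_t-\ox$. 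Either approach is short once the correct $U$ is identified; the point to repair is that your second paragraph picks the wrong one.
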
\vspace*{-0.05in}
\begin{proof} $\;$ To verify the CHIP \eqref{stab1}, it is sufficient to show by Lemma~\ref{lemmaCHIP} that the system $(\Th_t)_{t\in\mathcal{T}}$ is uniformly tangentially stable at $\bar{x}$. Indeed,
consider the open set $U:={\rm int}\left(\bigcap_{t\in T\backslash\mathcal{T}_f}\{x\in\mathbb{R}^n|\;f(t,x)<0\}-\bar{x}\right)$ and observe that for every $t\in\mathcal{T}_f$ the set $\Th_t-\bar{x}$ is a cone. This implies that $T(\bar{x};\Th_t)=\Th_t-\bar{x}$. Furthermore, for all $t\notin \mathcal{T}_f$ we have that $\ox$ is an interior point of $\Th_t$, and hence 
$T(\bar{x};\Th_t)=\mathbb{R}^n$. It tells us therefore that $T(\bar{x};\Th_t)\cap U\subset U\subset\Th_t-\bar{x}$.
\end{proof}\vspace*{0.05in}

The next theorem presents major counterparts for general measurable multifunctions of the normal cone formulas from Theorem~\ref{COROLLARY:FUZZYEXPRESSION} and Corollary~\ref{COROLLARY:RELATIVE_EINTERIOR} obtained above for cone-valued multifunctions.\vspace*{-0.05in}

\begin{theorem}[normal cone evaluations for measurable multifunctions]\label{FrechetnormalunderCHIP} Let $M\colon\Omega\tto\mathbb{R}^n$ be a measurable multifunction with closed values, and let $\ox\in M_{\cap}$ for its essential intersection \eqref{ess-inter}. Assume that the measurable CHIP holds for $M(\cdot)$ at $\ox$. Then we have the upper estimate
\begin{align}\label{FrechetnormalunderCHIPequation}
\Hat N(\ox;M_\cap)\subset\cl\left(\int_\Omega N\big(\ox;M(\omega)\big)d\mu(\omega)\right).
\end{align}
If in addition $M(\omega)$ is normally regular at $\ox$ for almost all $\omega\in\Omega$, then inclusion \eqref{FrechetnormalunderCHIPequation} holds as equality, and we also get
\begin{align}\label{FrechetnormalunderCHIPequation2}
\ri\left(\Hat N(\ox;M_\cap)\right)=\ri\left(\int_\Omega N\big(\ox;M(\omega)\big)d\mu(\omega)\right).
\end{align}
\end{theorem}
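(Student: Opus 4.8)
The plan is to reduce the statement to the cone-valued results of Section~\ref{SectionIntegralRepresentation} by passing to tangent cones, exploiting the measurable CHIP hypothesis. First I would recall the duality $\Hat N(\ox;\Th)=T^*(\ox;\Th)=T(\ox;\Th)^*$ together with the fact that for a family of cones $\{K_\omega\}$ one has $\big(\bigcap_\omega K_\omega\big)^*=\cl\co\big(\bigcup_\omega K_\omega^*\big)$; more precisely, in the measurable integral setting we will want the refined statement that the polar of the essential intersection $\bigcap_{\omega\text{ a.e.}}K_\omega$ equals the closure of the Aumann integral $\int_\Omega K_\omega^*\,d\mu(\omega)$. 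That refined polarity identity is essentially what Theorem~\ref{COROLLARY:FUZZYEXPRESSION} and Corollary~\ref{COROLLARY:RELATIVE_EINTERIOR} give us when applied to the cone-valued multifunction $\omega\mapsto T(\ox;M(\omega))$: indeed, the nonoverlapping/closure machinery there shows $N(0;\Lambda_\cap)\subset\cl\int_\Omega N(0;\Lambda(\omega))\,d\mu(\omega)$ and, under normal regularity of the $\Lambda(\omega)$, equality plus the relative-interior formula.

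The key steps, in order. Step 1: define the measurable cone-valued multifunction $\Lambda(\omega):=T(\ox;M(\omega))$ and check it is measurable (standard, since $M$ is measurable and the contingent cone is obtained by a countable limiting operation; cite \cite{rw}). Step 2: invoke the measurable CHIP hypothesis \eqref{equation_09} to identify $\Lambda_\cap=\bigcap_{\omega\text{ a.e.}}T(\ox;M(\omega))=T(\ox;M_\cap)$; hence $\Hat N(\ox;M_\cap)=T(\ox;M_\cap)^*=N(0;\Lambda_\cap)$ using the duality $\Hat N=T^*$ and the fact that for the closed convex cone $\Lambda_\cap$ the regular and limiting normal cones at $0$ coincide with the polar. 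Step 3: apply Theorem~\ref{COROLLARY:FUZZYEXPRESSION} to $\Lambda$, obtaining
\[
\Hat N(\ox;M_\cap)=N(0;\Lambda_\cap)\subset\cl\Big(\int_\Omega N\big(0;T(\ox;M(\omega))\big)d\mu(\omega)\Big).
\]
Step 4: observe $N(0;T(\ox;M(\omega)))=T(\ox;M(\omega))^*=\Hat N(\ox;M(\omega))\subset N(\ox;M(\omega))$ a.e.\ (the first equality is the polar-of-tangent-cone duality for the regular normal cone, the inclusion is \eqref{ln}); substituting yields \eqref{FrechetnormalunderCHIPequation}. Step 5: for the equality statement, assume $M(\omega)$ normally regular at $\ox$ a.e., so $\Hat N(\ox;M(\omega))=N(\ox;M(\omega))$; then $T(\ox;M(\omega))$ is a closed convex cone whose polar's polar gives it back, so $\Lambda(\omega)$ is normally regular at $0$, and Corollary~\ref{COROLLARY:RELATIVE_EINTERIOR} applies to $\Lambda$ to give both \eqref{COROLLARY:RELATIVE_EINTERIOR01} and \eqref{COROLLARY:RELATIVE_EINTERIOR02}; translating back through Steps 2 and 4 produces the equality in \eqref{FrechetnormalunderCHIPequation} and the relative-interior identity \eqref{FrechetnormalunderCHIPequation2}.

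The main obstacle I expect is Step 2 — the clean passage $\Lambda_\cap = T(\ox;M_\cap)$ and the accompanying polarity bookkeeping. The CHIP equation \eqref{equation_09} gives exactly $T(\ox;M_\cap)=\bigcap_{\omega\text{ a.e.}}T(\ox;M(\omega))$, but one must be careful that the right-hand side, interpreted as the essential intersection of the cone-valued multifunction $\Lambda$, agrees with the "$\Lambda_\cap$" appearing in Theorem~\ref{COROLLARY:FUZZYEXPRESSION}, and that the duality $\Hat N(\ox;\Th)=T(\ox;\Th)^*$ can be applied both to $M_\cap$ and termwise. A secondary subtlety is that normal regularity of $M(\omega)$ at $\ox$ must be shown to transfer to normal regularity of $T(\ox;M(\omega))$ at the origin; this is where one uses that a closed convex cone $K$ satisfies $K^{**}=K$ and $N(0;K)=\Hat N(0;K)=K^*$, so the passage is automatic once $\Hat N=N$ at $\ox$ for $M(\omega)$. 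Everything else is a routine substitution.
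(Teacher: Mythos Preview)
Your proposal is correct and follows essentially the same route as the paper: pass to the cone-valued multifunction $\Lambda(\omega):=T(\ox;M(\omega))$, use CHIP to identify $\Lambda_\cap=T(\ox;M_\cap)$ and hence $\Hat N(\ox;M_\cap)=\Hat N(0;\Lambda_\cap)$, then apply Theorem~\ref{COROLLARY:FUZZYEXPRESSION} and Corollary~\ref{COROLLARY:RELATIVE_EINTERIOR} together with the inclusion $N\big(0;T(\ox;M(\omega))\big)\subset N\big(\ox;M(\omega)\big)$. One small correction to Step~4: the equality $N(0;T(\ox;M(\omega)))=T(\ox;M(\omega))^*$ need not hold when the tangent cone is nonconvex (only $\Hat N(0;\cdot)=(\cdot)^*$ is guaranteed), but this is harmless since the argument only uses the standard inclusion $N\big(0;T(\ox;M(\omega))\big)\subset N\big(\ox;M(\omega)\big)$, exactly as the paper does.
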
\vspace*{-0.05in}
\begin{proof} $\;$ It follows from the definitions that $\Hat N(\ox;M_\cap)=\Hat N(0;T(\ox;M(\omega)))$. Furthermore, the imposed CHIP yields the fulfillment of \eqref{equation_09}. Hence, by the duality between $T(\cdot;\Th)$ and $\Hat N(\cdot;\Th)$, we get
\begin{align}\label{equation_10}
\Hat N\big(\ox;M_\cap\big)=\Hat N\big(0;T(\ox;M_\cap)\big)=\Hat N\Big(0;\bigcap_{\omega\in\Omega\;a.e}T\big(\ox;M(\omega)\big)\Big).
\end{align}
Then applying Theorem~\ref{COROLLARY:FUZZYEXPRESSION} to the cones in \eqref{equation_10} gives us the inclusion
\begin{equation*}
\Hat N\Big(0;\bigcap_{\omega\in\Omega \text{ a.e. }}  T\big(\ox;M(\omega)\big)\Big)\subset\cl\left(\int_\Omega N\big(0;T(\ox;M(\omega))\big)d\mu(\omega)\right),
\end{equation*}
which yields \eqref{FrechetnormalunderCHIPequation} due to $N(0;T(\ox;M(\omega)))\subset N(\ox;M(\omega))$ for all $\omega\in\Omega$.\vspace*{0.02in}

Proceeding now in the case where $M(\cdot)$ is normally regular at $\ox$, we can easily observe that the tangent cone $T(\ox;M(\omega))$ is also normally regular at the origin for almost all $\omega\in\O$. Applying then Corollary~\ref{COROLLARY:RELATIVE_EINTERIOR} together with \eqref{equation_09} and \eqref{equation_10} tells us that
\begin{align*}
\Hat N\big(\ox;M_\cap)&=\Hat N\big(0;T(\ox;M_\cap)\big)=\cl\Bigg(\int_\Omega\Hat N\big(0;T(\ox; M(\omega))\big)d\mu(\omega)\Bigg)\\&=\cl\Bigg(\int_\Omega\Hat N\big(\ox;M(\omega)\big)d\mu(\omega)\Bigg),
\end{align*}
which justifies the equality in \eqref{FrechetnormalunderCHIPequation}. The relative interior formula \eqref{FrechetnormalunderCHIPequation2} is verified similarly by employing Corollary~\ref{COROLLARY:RELATIVE_EINTERIOR}.\end{proof}\vspace*{0.05in}

Finally, we use CHIP and the conic result of Theorem~\ref{THEOREM:EXACTINTEGRALREPRESENTATION01}(ii) to dismiss the closure operation in the normal cone estimate for general multifunctions. \vspace*{-0.05in}

\begin{proposition}[normal cone estimate without closure]\label{FrechetnormalunderCHIP_int} Let $M\colon\Omega\tto\mathbb{R}^n$ be a measurable multifunction with closed values, and let $\ox\in M_{\cap}$. Suppose that the measurable CHIP holds for $M(\cdot)$ at $\ox$. Then we have the interior estimate
\begin{align*}
{\rm int}\Hat N(\ox;M_\cap)\subset\int_\Omega N\big(\ox;M(\omega)\big)d\mu(\omega).
\end{align*}	
\end{proposition}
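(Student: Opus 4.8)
The plan is to reduce the statement for general measurable multifunctions to the conic case already settled in Theorem~\ref{THEOREM:EXACTINTEGRALREPRESENTATION01}(ii) by passing to tangent cones, exactly in the spirit of the proof of Theorem~\ref{FrechetnormalunderCHIP}. First I would introduce the cone-valued measurable multifunction $\Lambda(\omega):=T(\ox;M(\omega))$; this is measurable because $M$ is closed-valued and measurable (the tangent cone depends measurably on $\omega$). The essential intersection of $\Lambda$ is $\Lambda_\cap=\bigcap_{\omega\in\Omega\text{ a.e.}}T(\ox;M(\omega))$, and the imposed measurable CHIP \eqref{equation_09} gives precisely $\Lambda_\cap=T(\ox;M_\cap)$.

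Next I would exploit the duality $\Hat N(\ox;\Th)=T^*(\ox;\Th)=\Hat N(0;T(\ox;\Th))$, which yields
\begin{equation*}
\Hat N(\ox;M_\cap)=\Hat N\big(0;T(\ox;M_\cap)\big)=\Hat N(0;\Lambda_\cap),
\end{equation*}
so that ${\rm int}\,\Hat N(\ox;M_\cap)={\rm int}\,\Hat N(0;\Lambda_\cap)$. Since $\Lambda$ is a closed cone-valued measurable multifunction, Theorem~\ref{THEOREM:EXACTINTEGRALREPRESENTATION01}(ii) applies and gives
\begin{equation*}
{\rm int}\,\Hat N(0;\Lambda_\cap)\subset\int_\Omega N\big(0;\Lambda(\omega)\big)d\mu(\omega)=\int_\Omega N\big(0;T(\ox;M(\omega))\big)d\mu(\omega).
\end{equation*}
Finally, I would invoke the standard inclusion $N(0;T(\ox;M(\omega)))\subset N(\ox;M(\omega))$, valid for every $\omega$ (the limiting normal cone to a set at a point contains the normal cone to its tangent cone at the origin), to pass from the integral on the right to $\int_\Omega N(\ox;M(\omega))d\mu(\omega)$, which is the desired estimate. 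One should note that Aumann integration is monotone with respect to this pointwise inclusion of set-valued integrands, so the containment is preserved under integration.

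The main technical point to be careful about is the measurability of $\omega\mapsto T(\ox;M(\omega))$ and the legitimacy of applying Theorem~\ref{THEOREM:EXACTINTEGRALREPRESENTATION01}(ii) to it: one needs $\Lambda$ to be a genuine measurable closed-valued multifunction so that measurable selections used implicitly in the Aumann integral and in the proof of that theorem exist. This follows from classical facts on measurable multifunctions (e.g., \cite[Chapter~14]{rw}) since $M$ is closed-valued measurable and the tangent cone operation is a Borel-measurable operation on closed sets, but it should be stated explicitly. Everything else is a routine chaining of duality, the previously established conic result, and the monotonicity of Aumann integration—no regularity or qualification condition is needed here, which is consistent with the remark following Theorem~\ref{THEOREM:EXACTINTEGRALREPRESENTATION01}.
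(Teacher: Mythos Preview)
Your proposal is correct and follows essentially the same route as the paper's proof: pass to the tangent-cone multifunction $\Lambda(\omega)=T(\ox;M(\omega))$, use the duality $\Hat N(\ox;M_\cap)=\Hat N(0;T(\ox;M_\cap))$ together with CHIP, apply Theorem~\ref{THEOREM:EXACTINTEGRALREPRESENTATION01}(ii), and then invoke $N(0;T(\ox;M(\omega)))\subset N(\ox;M(\omega))$. Your version is simply more explicit about measurability of $\Lambda$ and monotonicity of the Aumann integral, points the paper leaves implicit.
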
\vspace*{-0.1in}
\begin{proof} $\;$ First $\Hat N(\ox;M_\cap)=\Hat N(0;T(\ox;M_\cap))$. Then using the assumed CHIP and Theorem~\ref{THEOREM:EXACTINTEGRALREPRESENTATION01}(ii) for cone-valued mappings gives us
\begin{equation*}
{\rm int}\Hat N(0;M_\cap)\subset\int_{\Omega}N\big(0; T(\ox; M(\omega))   \big)d\mu(\omega)\subset\int_{\Omega}N\big(\ox;M(\omega)\big)d\mu(\omega),
\end{equation*}
which verifies the claimed upper estimate.
\end{proof}\vspace*{-0.2in}

\section{Applications to Stochastic Programming}\label{ApplicationSTOCHASTIC}\vspace*{-0.1in}

In this section we consider the {\em stochastic optimization problem} with the {\em random} constraint sets formulated in \eqref{firstproblem}. Suppose in what follows that $M\colon\O\to\R^n$ is a measurable multifunction with closed values and that $h\colon\R^n\to\oR$ is an l.s.c.\ function around the reference point.

Based on the normal cone formulas for the essential intersection of $M$ obtained in Sections~\ref{SectionIntegralRepresentation} and \ref{chip}, we are now ready to derive new necessary optimality conditions for local minimizers of \eqref{firstproblem}.\vspace*{-0.05in}

\begin{theorem}[necessary optimality conditions for general stochastic programs]\label{Theorem1:Applications} For a local minimizer $\ox\in M_\cap$ of \eqref{firstproblem} the following hold:\\[1ex]
{\bf(i)} Assume that $M_\cap$ is normally regular at $\ox$, that $M(\cdot)$ satisfies the measurable CHIP at $\ox$, and that the qualification condition
\begin{align}\label{normalcondition}
\big(-\subm^\infty h(\bar x)\big)\cap\cl\Big(\int_{\Omega}N\big(\bar{x};M(\omega)\big)d\mu(\omega)\Big)=\{0\}
\end{align}
is satisfied. Then we have the necessary optimality condition
\begin{align}\label{Theorem1:Applications:eq1}
0\in\subm h(\bar{x})+\cl\Big(\int_{\Omega}N\big(\bar{x};M(\omega)\big)d\mu(\omega)\Big).
\end{align}
{\bf(ii)} Assume that the sets $M(\omega)$ are cones for a.e.\ $\omega\in\O$ and that $\ox=0$. Then the fulfilment of \eqref{normalcondition} ensures that \eqref{Theorem1:Applications:eq1} holds at this point.
\end{theorem}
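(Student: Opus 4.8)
The plan is to combine the generalized Fermat rule with the limiting subdifferential sum rule and then to substitute the integral upper estimates for the normal cone to $M_\cap$ established in Sections~\ref{SectionIntegralRepresentation} and~\ref{chip}. First I would pass to the equivalent unconstrained formulation \eqref{formulation2}, so that $\ox$ is a local minimizer of the l.s.c.\ function $h+\delta_{M_\cap}$, whence the generalized Fermat rule yields $0\in\partial(h+\delta_{M_\cap})(\ox)$. To split this inclusion I would apply the limiting subdifferential sum rule, which is available in our finite-dimensional setting (see, e.g., \cite{m06,m18}) provided that the qualification condition $\partial^\infty h(\ox)\cap(-\partial^\infty\delta_{M_\cap}(\ox))=\{0\}$ holds; since $\partial^\infty\delta_{M_\cap}(\ox)=\partial\delta_{M_\cap}(\ox)=N(\ox;M_\cap)$, this reduces to $\partial^\infty h(\ox)\cap(-N(\ox;M_\cap))=\{0\}$ and delivers $0\in\partial h(\ox)+N(\ox;M_\cap)$.

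The remaining task is to deduce the qualification condition required by the sum rule from the stated hypothesis \eqref{normalcondition}, which is phrased in terms of the integral closure rather than $N(\ox;M_\cap)$. In case (i) I would note that $N(\ox;M_\cap)=\Hat N(\ox;M_\cap)$ by the assumed normal regularity of $M_\cap$ at $\ox$, while $\Hat N(\ox;M_\cap)\subset\cl(\int_\Omega N(\ox;M(\omega))\,d\mu(\omega))$ by Theorem~\ref{FrechetnormalunderCHIP} under the measurable CHIP; hence, passing to negatives and intersecting with $\partial^\infty h(\ox)$, condition \eqref{normalcondition} forces $\partial^\infty h(\ox)\cap(-N(\ox;M_\cap))=\{0\}$, so the sum rule applies and chaining $0\in\partial h(\ox)+N(\ox;M_\cap)$ with the above inclusion yields \eqref{Theorem1:Applications:eq1}. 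In case (ii) the measurable CHIP holds automatically since the $M(\omega)$ are cones and $\ox=0$, and normal regularity is not even needed: Theorem~\ref{COROLLARY:FUZZYEXPRESSION} gives $N(0;M_\cap)\subset\cl(\int_\Omega N(0;M(\omega))\,d\mu(\omega))$ directly, after which the reasoning of case (i) applies verbatim.

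I expect the main subtlety to lie in the bookkeeping around the sum rule rather than in any deep estimate: one must invoke the version of the limiting sum rule that admits the singular subdifferential $\partial^\infty h(\ox)$ (so that no Lipschitz continuity of $h$ is assumed), observe that the SNC/normal-compactness requirements are automatic in $\R^n$, and correctly identify both $\partial^\infty\delta_{M_\cap}(\ox)$ and $\partial\delta_{M_\cap}(\ox)$ with $N(\ox;M_\cap)$. Once the qualification condition is verified, the rest is a direct appeal to the normal-cone formulas already proved earlier in the paper.
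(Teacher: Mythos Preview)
Your proposal is correct and mirrors the paper's proof essentially verbatim: the paper also invokes the Fermat/sum-rule package (citing \cite[Proposition~5.3]{m06}) to obtain $0\in\partial h(\ox)+N(\ox;M_\cap)$ under $(-\partial^\infty h(\ox))\cap N(\ox;M_\cap)=\{0\}$, then plugs in Theorem~\ref{FrechetnormalunderCHIP} for~(i) and Theorem~\ref{COROLLARY:FUZZYEXPRESSION} for~(ii) exactly as you describe.
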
\vspace*{-0.05in}
\begin{proof} $\;$ It follows from basic variational analysis (see, e.g., \cite[Proposition~5.3]{m06}) that $0\in\subm h(\bar{x})+N(\bar{x};M_{\cap})$ if $(-\subm^\infty h(\bar{x}))\cap N(\bar{x};M_{\cap})=\{0\}$. Employing now Theorem~\ref{FrechetnormalunderCHIP} under the assumptions made in (i), we get
\begin{equation*}
N(\ox;M_\cap)=\Hat N(\bar{x};M_{\cap})\subset\cl\Big(\int_{\Omega}N\big(\bar{x};M(\omega)\big)d\mu(\omega)\Big),
\end{equation*}
which clearly ensures the fulfillment of condition \eqref{Theorem1:Applications:eq1} if \eqref{normalcondition} holds.

To verify (ii) in the case of cone values, we proceed similarly by using the second statement of Theorem~\ref{COROLLARY:FUZZYEXPRESSION} instead of Theorem~\ref{FrechetnormalunderCHIP}. Note that in this case neither CHIP nor normal regularity assumptions are needed.
\end{proof}\vspace*{0.05in}

Let us discuss the obtained estimates and their consequences.\vspace*{-0.05in}

\begin{remark} $(${\em discussions on optimality conditions}$)$\label{diss} Observe the following:\\[1ex]
{\bf(a)} The qualification condition \eqref{normalcondition} holds {\em automatically} if $h$ is locally {\em Lipschitzian} around $\ox$ due the characterization $\partial^\infty h(\ox)=\{0\}$ of this property.\\[1ex]
{\bf(b)} The result of Theorem~\ref{Theorem1:Applications}(i) clearly yields those from \cite[Theorem~4.2]{mp2} and \cite[Theorem~8.77]{m18} for problem \eqref{firstproblem} with countably many geometric constraints. Moreover, we significantly extend the previous developments in the case of countable constraints by {\em dropping the normal qualification condition} from Definition~\ref{definitionNQC01}(ii) imposed in \cite{m18,mp2}. The result of Theorem~\ref{Theorem1:Applications}(ii) for cone-valued multifunctions without the normal regularity has never been observed earlier even for countably many constraints.\\[1ex]
{\bf(c)} The result of Theorem~\ref{THEOREM:EXACTINTEGRALREPRESENTATION01}(i) allows us, under the additional assumption therein, to avoid the closure operation in both assertions of Theorem~\ref{Theorem1:Applications}.\\[1ex]
{\bf(d)} Finally, let us mention that the normal regularity of $M_\cap$ and the normal qualification condition \eqref{normalcondition} can  be replaced by the  assumption that $h$ is Fr\'echet differentiable at $\bar x$. Indeed, we get it by using \cite[Proposition~1.107]{m06} instead of \cite[Proposition~5.3]{m06} to derive the necessary optimality conditions.
\end{remark}\vspace*{-0.07in}

We conclude this section by specifying the results of Theorem~\ref{Theorem1:Applications} to the class of stochastic optimization problems with {\em random inequality} constraints
\begin{align}\label{lambdaintermsoff}
M(\omega):=\big\{x\in\mathbb{R}^n\big|\;f(\omega,x)\le 0\big\},
\end{align}
where $f\colon\Omega\times\mathbb{R}^n\to\oR$ is a {\em normal integrand} such that $f_\omega(\cdot):=f(\omega,\cdot)$ is locally Lipschitzian around $\bar{x}$ for almost all $\omega\in\Omega$.\vspace*{-0.05in}

\begin{corollary}[necessary optimality conditions for stochastic programs with inequality constraints]\label{Theorem01:inequalitysystem} Let $\ox$ be a local minimizer of problem \eqref{firstproblem} with the inequality constraints \eqref{lambdaintermsoff}. The following assertions hold:\\[1ex]
{\bf(i)} In addition to the normal regularity and CHIP assumptions of Theorem~{\rm\ref{Theorem1:Applications}}, impose the qualification conditions
\begin{align}\label{qc}
0\notin\sub f_\omega(\bar{x})\;\text{ for almost all }\;\omega\in\Omega_{f}:=\big\{\omega\in\Omega\big|\;f(\omega,\bar{x})=0\big\},
\end{align}
\begin{align}\label{normalconditionTheorem01:inequalitysystem}
\big(-\subm^\infty h(\bar{x})\big)\cap\cl\Big(\int_{\Omega_f}{\rm cone}\big(\subm f_\omega(\ox)\big)d\mu(\omega)\Big)=\{0\}.
\end{align}
Then we have the necessary optimality condition
\begin{align}\label{Theorem01:inequalitysystem:conclusion}
0\in\subm h(\ox)+\cl\Big(\int_{\Omega_f}{\rm cone}\big(\subm f_\omega(\ox)\big)d\mu(\omega)\Big).
\end{align}
{\bf(ii)} Assume that $f(\omega,\lm x)\le\lm f(\omega,x)$ for all $\lm\ge 0$, all $x\in\R^n$, and a.e.\ $\omega\in\O$. Then the fulfillment of \eqref{normalconditionTheorem01:inequalitysystem} ensures that \eqref{Theorem01:inequalitysystem:conclusion} holds at $\ox=0$.
\end{corollary}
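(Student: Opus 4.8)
The plan is to reduce Corollary~\ref{Theorem01:inequalitysystem} to Theorem~\ref{Theorem1:Applications} by computing the normal cone to the inequality sets \eqref{lambdaintermsoff} in terms of the subdifferential of $f_\omega$. First I would recall the standard nonsmooth normal cone formula: if $f_\omega$ is locally Lipschitzian around $\ox$ and $0\notin\subm f_\omega(\ox)$ whenever $f(\omega,\ox)=0$, then for the set $M(\omega)=\{x\mid f(\omega,x)\le 0\}$ one has $N(\ox;M(\omega))={\rm cone}\,\subm f_\omega(\ox)$ when $\omega\in\Omega_f$, and $N(\ox;M(\omega))=\{0\}$ when $f(\omega,\ox)<0$; see \cite[Theorem~1.17 or Corollary~3.24]{m18}. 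This is precisely where the qualification condition \eqref{qc} enters, and it must be invoked pointwise for a.e.\ $\omega\in\Omega_f$. Substituting this into \eqref{normalcondition} and \eqref{Theorem1:Applications:eq1}, and noting that the contribution of $\omega\notin\Omega_f$ vanishes, turns the abstract conditions of Theorem~\ref{Theorem1:Applications} into \eqref{normalconditionTheorem01:inequalitysystem} and \eqref{Theorem01:inequalitysystem:conclusion}.

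For assertion (i), the remaining work is bookkeeping: under the stated normal regularity of $M_\cap$, the measurable CHIP for $M(\cdot)$, and the qualification condition \eqref{normalconditionTheorem01:inequalitysystem}, I would apply Theorem~\ref{Theorem1:Applications}(i) directly with the identification $N(\ox;M(\omega))={\rm cone}\,\subm f_\omega(\ox)$ a.e.\ on $\Omega_f$ and $N(\ox;M(\omega))=\{0\}$ a.e.\ on $\Omega\setminus\Omega_f$, so that $\int_\Omega N(\ox;M(\omega))d\mu(\omega)=\int_{\Omega_f}{\rm cone}(\subm f_\omega(\ox))d\mu(\omega)$. One subtlety to check is the \emph{measurability} of $\omega\mapsto N(\ox;M(\omega))$ (equivalently of $\omega\mapsto\subm f_\omega(\ox)$ as a set-valued map), which is needed for the Aumann integral to make sense; this follows from the normal-integrand structure of $f$ and standard measurable-selection arguments, e.g.\ \cite[Chapter~14]{rw}.

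For assertion (ii), the positive-homogeneity-type hypothesis $f(\omega,\lm x)\le\lm f(\omega,x)$ for $\lm\ge 0$ guarantees that each sublevel set $M(\omega)=\{x\mid f(\omega,x)\le 0\}$ is a cone (if $x\in M(\omega)$ then $\lm x\in M(\omega)$ for all $\lm\ge 0$, and $0\in M(\omega)$ since $f(\omega,0)\le 0$), and that $\ox=0$ lies in $M_\cap$. Hence Theorem~\ref{Theorem1:Applications}(ii) applies \emph{without} CHIP or normal regularity, and the computation $N(0;M(\omega))={\rm cone}\,\subm f_\omega(0)$ on $\Omega_f$ proceeds as before; note that here $\Omega_f=\{\omega\mid f(\omega,0)=0\}$, and on its complement $0$ is interior to $M(\omega)$ only if $f(\omega,0)<0$, which again kills the normal cone. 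I would then conclude via Theorem~\ref{Theorem1:Applications}(ii).

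The main obstacle is making the pointwise normal cone formula $N(\ox;M(\omega))={\rm cone}\,\subm f_\omega(\ox)$ uniform and measurable in $\omega$: one must verify that the Lipschitz modulus of $f_\omega$ near $\ox$ and the qualification $0\notin\subm f_\omega(\ox)$ can be handled on a common full-measure set, and that the resulting set-valued map $\omega\mapsto{\rm cone}\,\subm f_\omega(\ox)$ is $\mathcal{A}$-measurable with a measurable selection realizing any given element of its Aumann integral. Everything else is a routine substitution into the already-proven Theorem~\ref{Theorem1:Applications}.
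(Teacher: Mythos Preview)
Your approach is essentially the paper's: reduce to Theorem~\ref{Theorem1:Applications} via the normal-cone estimate for sublevel sets, split $\Omega$ into $\Omega_f$ and its complement, and for (ii) observe that the homogeneity hypothesis makes each $M(\omega)$ a cone. One correction: the identity $N(\ox;M(\omega))={\rm cone}\,\subm f_\omega(\ox)$ you invoke is in general only the \emph{inclusion} $N(\ox;M(\omega))\subset\R_+\subm f_\omega(\ox)$ under \eqref{qc} (this is what \cite[Proposition~10.3]{rw} gives, and exactly what the paper uses); equality requires additional regularity of $f_\omega$ that is not assumed here. Fortunately the inclusion is all that is needed---it guarantees both that \eqref{normalconditionTheorem01:inequalitysystem} implies \eqref{normalcondition} and that \eqref{Theorem1:Applications:eq1} implies \eqref{Theorem01:inequalitysystem:conclusion}---so your argument goes through once ``$=$'' is weakened to ``$\subset$''.
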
\vspace*{-0.04in}
\begin{proof} $\;$ It follows from \eqref{qc} by \cite[Proposition~10.3]{rw} that
\begin{equation*}
N\big(\bar{x};M(\omega)\big)\subset\R_{+}\subm f_\omega(\bar{x})\;\text{ for almost all }\omega\in\Omega\;\text{ with }\;f_\omega(\bar{x})=0.
\end{equation*}
If $f_\omega(\bar{x})<0$, we get by the continuity of $f_\omega$ that $N(\bar{x};M(\omega))=\{0\}$. Thus
\begin{equation*}
\int_{\Omega}N\big(\bar{x};M(\omega)\big)d\mu(\omega)\subset\cl\Big(\int_{\Omega_f}{\rm cone}\big(\subm f_\omega(x)\big)d\mu(\omega)\Big),
\end{equation*}
which allows us to deduce assertion (i) from Theorem~\ref{Theorem1:Applications}(i). To verify assertion (ii), it is sufficient to observe that the additional assumption therein ensures that the sets $M(\omega)$ in \eqref{lambdaintermsoff} are cones, and then apply Theorem~\ref{Theorem1:Applications}(ii).
\end{proof}\vspace*{0.05in}

Note  that the normal regularity assumption on the mapping \eqref{lambdaintermsoff} can be replaced by the {\em subdifferential/lower regularity} of $f_\omega$ at $\ox$ (see \cite{m06,rw}) and that the sufficient conditions for the CHIP assumption for inequality constraints are given in Proposition~\ref{PropositionCHIP}.\vspace*{-0.2in}

\section{Applications to semi-infinite programming}\label{AplSIP}\vspace*{-0.1in}
	
The concluding section of the paper is devoted to applications of the results obtained above to general problems of {\em semi-infinite programming} given by
\begin{align}\label{IOPTIMIZATION.FUZZY.SIP}
\mbox{minimize }\;h(x)\;\mbox{ subject to }\;x\in M(t)\;\mbox{ for all }\;t\in{\cal T},
\end{align}
where $M\colon{\cal T}\tto\mathbb{R}^n$ is a multifunction with closed values, and where the index set ${\cal T}$ is a metric space. The conventional setting of \eqref{IOPTIMIZATION.FUZZY.SIP} concerns linear and convex problems with inequality constraints defined on compact sets ${\cal T}$, while more recently various classes of semi-infinite programs with inequality constraints on noncompact sets have been also under consideration; see, e.g., \cite{clmp2,gl,lnp,m18,mp2} and the references therein. Problems of type \eqref{IOPTIMIZATION.FUZZY.SIP} with countable set constraints were studied in \cite{m18,mp2}.

Note that, in contrast to problem \eqref{firstproblem} from the previous section, program \eqref{IOPTIMIZATION.FUZZY.SIP} does not explicitly contain any measure. However, we can associate with \eqref{IOPTIMIZATION.FUZZY.SIP} a measure space constructed as follows. For a closed set $A\subset \mathcal{T}$, let $\mathcal{B}(A)$ be the Borel $\sigma$-algebra on $A$. We say that a measure on $\mathcal{B}(A)$ is {\em strictly positive} if every nonempty open subset of $A$ has strictly positive measure and then denote by $\mathfrak{M}_+(A)$ the set of all the finite strictly positive measures on $\mathcal{B}(A)$. For simplicity, we confine ourselves to the case where $M$ is an {\em outer semicontinuous}, i.e., $\Limsup_{s\to t}M(s)\subset M(t)$ for all $t\in{\cal T}$.\vspace*{0.03in}

The next theorem presents general necessary optimality conditions for nonsmooth and nonconvex semi-infinite programs of type \eqref{IOPTIMIZATION.FUZZY.SIP} with infinitely many set constraints indexed via arbitrary metric spaces.\vspace*{-0.05in}

\begin{theorem}[necessary optimality conditions for semi-infinite programs with set constraints]\label{firstresultCHIP} Let $\ox$ be a local minimizer of problem \eqref{IOPTIMIZATION.FUZZY.SIP}, where the cost function $h(\cdot)$ is locally Lipschitzian around $\ox$.\\[1ex]
{\bf(i)} Assume that the set $\bigcap_{t\in \cal T}M(t)$ is normally regular at $\ox$ and that for each dense set $A\subset{\cal T}$ the CHIP condition
\begin{align}\label{CHIPDENSESETS}
T\Big(\bar{x};\bigcap_{t\in A}M(t)\Big)=\bigcap_{t\in A}T\big(\bar{x};M(t)\big)
\end{align}
is satisfied. Then for every measure $\nu\in\mathfrak{M}_+({\cal T})$ we have
\begin{align}\label{Inclusion}
0\in\sub h\big(\bar{x}\big)+{\rm cl}\Big(\int_{{\cal T}}N\big(\bar{x};M(t)\big)d\nu(t)\Big).
\end{align}
{\bf(ii)} Assume that the set $M(t)$ is a cone for each $t\in{\cal T}$, and that $\ox =0$. Then the optimality condition \eqref{Theorem1:Applications:eq1} holds at the origin.
\end{theorem}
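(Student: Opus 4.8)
The plan is to reduce the semi-infinite setting to the stochastic setting already treated in Section~\ref{ApplicationSTOCHASTIC}, using a measure $\nu\in\mathfrak{M}_+({\cal T})$ as the bridge. First I would fix such a measure $\nu$ and pass to the measure space $({\cal T},\mathcal{B}({\cal T}),\nu)$. The essential point is that, because $\nu$ is strictly positive, the essential intersection of $M$ over $({\cal T},\mathcal{B}({\cal T}),\nu)$ coincides (at least locally around $\ox$, which is all that matters) with the full intersection $\bigcap_{t\in{\cal T}}M(t)$. The inclusion $\bigcap_{t\in{\cal T}}M(t)\subset M_\cap$ is trivial; for the reverse, if $x\in M_\cap$ then $x\in M(t)$ for all $t$ outside a $\nu$-null set $A$, and since $\nu$ is strictly positive the complement of $A$ is dense in ${\cal T}$; now the outer semicontinuity of $M$ forces $x\in M(t)$ for the remaining $t$ as well, because each such $t$ is a limit of points $s_j\notin A$ with $x\in M(s_j)$, whence $x\in\Limsup_{s\to t}M(s)\subset M(t)$. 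Thus $M_\cap=\bigcap_{t\in{\cal T}}M(t)$, so normal regularity of $\bigcap_{t\in{\cal T}}M(t)$ at $\ox$ is exactly normal regularity of $M_\cap$ at $\ox$.

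Next I would verify the measurable CHIP for $M(\cdot)$ at $\ox$ with respect to $({\cal T},\mathcal{B}({\cal T}),\nu)$, i.e.\ that $T(\ox;M_\cap)=\bigcap_{t\in{\cal T}\,\mathrm{a.e.}}T(\ox;M(t))$. By the identification just made, the left side is $T(\ox;\bigcap_{t\in{\cal T}}M(t))$. For the right side, the ``a.e.'' intersection is $\bigcap_{t\in A}T(\ox;M(t))$ for a suitable $\nu$-conull—hence dense—set $A\subset{\cal T}$; the hypothesis \eqref{CHIPDENSESETS} applied to this dense $A$ gives $T(\ox;\bigcap_{t\in A}M(t))=\bigcap_{t\in A}T(\ox;M(t))$, and the same outer-semicontinuity argument as above shows $\bigcap_{t\in A}M(t)$ agrees with $\bigcap_{t\in{\cal T}}M(t)$ near $\ox$, so the two tangent cones coincide. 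This establishes the measurable CHIP hypothesis of Theorem~\ref{Theorem1:Applications}(i). Finally, since $h$ is locally Lipschitzian around $\ox$, we have $\partial^\infty h(\ox)=\{0\}$, so the qualification condition \eqref{normalcondition} holds automatically (Remark~\ref{diss}(a)). Invoking Theorem~\ref{Theorem1:Applications}(i) then yields \eqref{Inclusion}, proving part (i).

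For part (ii), when each $M(t)$ is a cone and $\ox=0$, CHIP holds automatically (as noted after Definition~\ref{CHIPFORMEASURABLESET}), and the essential intersection over any strictly positive $\nu$ again coincides near $0$ with $\bigcap_{t\in{\cal T}}M(t)$; one then applies Theorem~\ref{Theorem1:Applications}(ii), which requires neither CHIP nor normal regularity, to get the asserted conclusion. The main obstacle I anticipate is the careful handling of the passage between the $\nu$-almost-everywhere intersection and the genuine intersection over all of ${\cal T}$: one must make sure that outer semicontinuity of $M$ together with strict positivity of $\nu$ genuinely closes the gap on a neighborhood of $\ox$, and that the dense set appearing from a $\nu$-null exceptional set is exactly the kind of dense set covered by hypothesis \eqref{CHIPDENSESETS}. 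Once that topological bridge is in place, the rest is a direct appeal to the already-proved stochastic-programming results.
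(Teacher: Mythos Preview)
Your plan is correct and follows essentially the same route as the paper: reduce to the stochastic problem via a strictly positive measure, use outer semicontinuity plus strict positivity to identify $M_\cap$ with $\bigcap_{t\in{\cal T}}M(t)$, verify the measurable CHIP from \eqref{CHIPDENSESETS} on dense conull sets, and then invoke Theorem~\ref{Theorem1:Applications}. Two small technical points the paper makes explicit that you should not skip: work on the \emph{completion} of $({\cal T},\mathcal{B}({\cal T}),\nu)$ (the earlier results assume a complete measure space), and check that $M$ is actually measurable there, which follows from outer semicontinuity via the lower semicontinuity of $t\mapsto d_{M(t)}(x)$.
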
\vspace*{-0.05in}
\begin{proof} $\;$ Denote by $({\cal T},\mathcal{A},\mu)$ the completion of $({\cal T},\mathcal{B}({\cal T}),\nu)$ and consider the following optimization problem of type \eqref{firstproblem} from Section~\ref{ApplicationSTOCHASTIC}:
\begin{align}\label{IOPTIMIZATION.FUZZY.SIPmu}
\begin{array}{c}
\mbox{minimize }\;h(x)\;\mbox{ subject to }\;x\in M_{\cap}.
\end{array}
\end{align}
Let us check that $\bar{x}$ is a local minimizer of \eqref{IOPTIMIZATION.FUZZY.SIPmu} and that all the assumptions of Theorem~\ref{Theorem1:Applications} are satisfied for  \eqref{IOPTIMIZATION.FUZZY.SIPmu}. Indeed, the imposed outer semicontinuity of $M(\cdot)$ ensures that the distance function $t\mapsto d_{M(t)}(x)$ is l.s.c.\ on ${\cal T}$ for all $x\in\mathbb{R}^n$, which yields the measurability of $M(\cdot)$ with respect to $({\cal T},\mathcal{A},\mu)$ by employing \cite[Theorems~14.2 and 14.8]{rw}.

It is easy to see that $M_{\cap}$ contains the feasible set of \eqref{IOPTIMIZATION.FUZZY.SIP}. Furthermore, if $x\in M_{\cap}$, then there exists a set $A\in\mathcal{A}$ such that $\mu({\cal T}\backslash A)=0$ and $x\in M(s)$ for all $s\in A$. Observe that $A$ is dense on ${\cal T}$; otherwise there exists an open set $U$ such that $A\cap U\ne\emp$, which contradicts the strict positivity of $\mu$. In particular, for every $t\in{\cal T}$ there exists a sequence $t_k\to t$ as $k\to\infty$ with $x\in M(t_k)$ for all $k\in\N$, and then the outer semicontinuity of $M(\cdot)$ yields $x\in M(t)$. This shows that $x\in M(t)$ for all ${\cal T}$ if and only if $x\in M_{\cap}$. It follows also from the above arguments that $\bar{x}$ is a local minimizer of \eqref{IOPTIMIZATION.FUZZY.SIPmu}.

To verify now assertion (i) of the theorem, we are going to apply Theorem~\ref{Theorem1:Applications}(i) to problem \eqref{IOPTIMIZATION.FUZZY.SIPmu}. As follows from the above, $M_\cap$ is normally regular at $\bar{x}$. Furthermore, the qualification condition \eqref{normalcondition} holds due the imposed Lipschitz continuity of $h(\cdot)$ around $\ox$; see Remark~\ref{diss}(a). To apply the result of Theorem~\ref{Theorem1:Applications}(i), it remains to show that the assumed CHIP \eqref{CHIPDENSESETS} yields the validity of the measurable CHIP for $M_\cap$ at $\ox$ with respect to $\mu$. To proceed, pick any $v\in\bigcap_{t\in{\cal T}\;\mu\text{-a.e.}}T(\bar{x};M(t))$ and find a dense set $A\subset{\cal T}$ of full measure such that $v\in\bigcap_{t\in A}T(\bar{x};M(t))$. Hence  \eqref{CHIPDENSESETS} tells us that $v\in T(\ox;\bigcap_{t\in A}M(t))$. It follows from the outer semicontinuity of $M(\cdot)$ that
\begin{align*}
\bigcap_{t\in A}M(t)=\bigcap_{t\in{\cal T}}M(t),\;\mbox{ and so }\;\bigcap_{t\in{\cal T}\;\mu\text{-a.e.}}T\big(\ox;M(t)\big)\subset T(\ox;M_\cap),
\end{align*}
which justifies the measurable CHIP for $M_\cap$ at $\ox$. Using finally the necessary optimality condition \eqref{Theorem1:Applications:eq1} of Theorem~\ref{Theorem1:Applications}(i), we arrive at \eqref{Inclusion} and thus complete the proof of assertion (i). Assertion (ii) of the theorem follows directly from Theorem~\ref{Theorem1:Applications}(ii) and the arguments above.
\end{proof}\vspace*{-0.05in}

\begin{remark}$(${\em Fr\'echet differentiable costs}$)$\label{remark7.2} It easily follows from the proof of Theorem~\ref{firstresultCHIP} that the normal regularity of $\bigcap_{t\in\cal T}M(t)$ at $\ox$ therein is not needed if the cost function $h$ is Fr\'echet differentiability at $\ox$; see Remark~\ref{diss}(d).
\end{remark}\vspace*{-0.05in}

Next we consider semi-infinite programs with {\em inequality constraints}:
\begin{align}\label{IOPTIMIZATION.FUZZY.SIP.3}
\begin{array}{c}
\mbox{minimize }\;h(x)\;\mbox{ subject to }\;x \in M(t):=\big\{x\big|\;f(t,x)\le 0\big\},\;t\in{\cal T},
\end{array}
\end{align}
where $h\colon\R^n\to\R$ is continuously differentiable while $f\colon{\cal T}\times\R^n\to\R$ is continuous with respect to $t$ and continuously differentiable with respect to $x$.\vspace*{-0.08in}

\begin{theorem}[optimality conditions for semi-infinite programs with inequality constraints]\label{theoremFinal} Let $\bar{x}$ be a local minimizer of \eqref{IOPTIMIZATION.FUZZY.SIP.3} such that
\begin{equation*}
\ox\in{\rm int}\Big(\bigcap_{t\in{\cal T}\backslash{\cal T}_f}\big\{x\in\mathbb{R}^n\big|\;f(t,x)<0\big\}\Big)\;\mbox{ with }\;{\cal T}_f:=\big\{t\in{\cal T}\big|\;f(t,\bar{x})=0\big\},
\end{equation*}
that $\nabla_x f(t,\ox)\ne 0$ for all $t\in{\cal T}_f$, and that the mapping $t\mapsto\nabla_x f(t,\ox)$ is continuous on ${\cal T}_f$. Furthermore, suppose that the CHIP assumption \eqref{CHIPDENSESETS} is satisfied at $\ox$ with $A:={\cal T}_f$ therein. Then we have
\begin{align}\label{inclusiongradient}
0\in\nabla h(\bar{x})+{\rm cl}\Big(\int_{{\cal T}_f}{\rm cone}\big\{\nabla_x f(t,\bar{x})\big\}d\nu(t)\Big)\;\mbox{ for any }\;\nu\in\mathfrak{M}_{+}({\cal T}_f).
\end{align}
\end{theorem}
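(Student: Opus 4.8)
\emph{Proof strategy.} The plan is to restrict problem \eqref{IOPTIMIZATION.FUZZY.SIP.3} to its set of active indices and then to invoke the set-constrained result of Theorem~\ref{firstresultCHIP}(i) over this smaller index space, exploiting the smoothness of $h$ to bypass the normal regularity hypothesis. First I would pass to the closed metric subspace ${\cal T}_f\subset{\cal T}$. Since the local minimizer $\ox$ is feasible, $f(t,\ox)<0$ for every $t\in{\cal T}\backslash{\cal T}_f$, so $\ox\in{\rm int}\,M(t)$ for all such $t$; moreover the hypothesis $\ox\in{\rm int}\big(\bigcap_{t\in{\cal T}\backslash{\cal T}_f}\{x\mid f(t,x)<0\}\big)$ supplies an open neighborhood $V$ of $\ox$ on which $\bigcap_{t\in{\cal T}}M(t)$ and $\bigcap_{t\in{\cal T}_f}M(t)$ coincide. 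Hence $\ox$ is still a local minimizer of the reduced program ``minimize $h(x)$ subject to $x\in M(t)$ for all $t\in{\cal T}_f$'', and the (ambient) outer semicontinuity of $M$ is inherited by its restriction $M|_{{\cal T}_f}$.

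Next I would record the pointwise geometry of the active constraints. For $t\in{\cal T}_f$ we have $f(t,\ox)=0$ and $\nabla_x f(t,\ox)\ne 0$, so the classical description of tangent and normal cones to sublevel sets of $C^1$ functions at points where the defining function vanishes with nonzero gradient (cf.\ \cite[Chapter~6]{rw}) yields
$$T\big(\ox;M(t)\big)=\big\{v\mid\langle\nabla_x f(t,\ox),v\rangle\le 0\big\},\qquad \Hat N\big(\ox;M(t)\big)=N\big(\ox;M(t)\big)={\rm cone}\big\{\nabla_x f(t,\ox)\big\},$$
so in particular each $M(t)$, $t\in{\cal T}_f$, is normally regular at $\ox$. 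For $t\notin{\cal T}_f$ we already noted $\ox\in{\rm int}\,M(t)$, hence $N(\ox;M(t))=\{0\}$, which is why only the active indices survive in the final formula.

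The crux is to verify the CHIP hypothesis of Theorem~\ref{firstresultCHIP}(i) for the index space ${\cal T}_f$, namely $T\big(\ox;\bigcap_{t\in A}M(t)\big)=\bigcap_{t\in A}T(\ox;M(t))$ for \emph{every} dense $A\subset{\cal T}_f$, starting only from the single assumed identity \eqref{CHIPDENSESETS} with $A={\cal T}_f$. Given such a dense $A$, outer semicontinuity of $M|_{{\cal T}_f}$ forces $\bigcap_{t\in A}M(t)=\bigcap_{t\in{\cal T}_f}M(t)$ (if $x$ lies in every $M(t)$ with $t\in A$, and $t_0\in{\cal T}_f$, choose $t_k\to t_0$ in $A$ and pass to the limit), so the left-hand tangent cones agree; on the right-hand side the assumed continuity of $t\mapsto\nabla_x f(t,\ox)$ on ${\cal T}_f$ gives $\bigcap_{t\in A}\{v\mid\langle\nabla_x f(t,\ox),v\rangle\le 0\}=\bigcap_{t\in{\cal T}_f}\{v\mid\langle\nabla_x f(t,\ox),v\rangle\le 0\}$. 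Thus the CHIP for $A$ collapses to \eqref{CHIPDENSESETS} with $A={\cal T}_f$, which is precisely the standing hypothesis. I expect this transfer step — promoting the single stipulated CHIP to CHIP along every dense index set, on which the reduction to the machinery of Sections~\ref{SectionIntegralRepresentation}--\ref{chip} rests — to be the only genuinely delicate point; it works exactly because sets of full $\mu$-measure are dense in ${\cal T}_f$ (strict positivity of $\nu$) and the active gradients vary continuously with $t$.

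Finally I would fix $\nu\in\mathfrak M_+({\cal T}_f)$, let $({\cal T}_f,{\cal A},\mu)$ be the completion of $({\cal T}_f,{\cal B}({\cal T}_f),\nu)$ (which alters neither measurability of $M|_{{\cal T}_f}$ nor the relevant Aumann integrals), and apply Theorem~\ref{firstresultCHIP}(i) to the reduced program: the normal regularity of $\bigcap_{t\in{\cal T}_f}M(t)$ demanded there may be discarded because $h\in C^1$ is Fr\'echet differentiable at $\ox$ (Remark~\ref{diss}(d) and Remark~\ref{remark7.2}), the CHIP was verified above, and $h$ is locally Lipschitz. This yields $0\in\subm h(\ox)+{\rm cl}\big(\int_{{\cal T}_f}N(\ox;M(t))\,d\nu(t)\big)$; substituting $\subm h(\ox)=\{\nabla h(\ox)\}$ and $N(\ox;M(t))={\rm cone}\{\nabla_x f(t,\ox)\}$ gives exactly \eqref{inclusiongradient}.
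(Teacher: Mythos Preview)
Your proposal is correct and follows essentially the same route as the paper: reduce to the active index set ${\cal T}_f$, compute the tangent and normal cones to $M(t)$ via the nonzero gradient $\nabla_x f(t,\ox)$, upgrade the single CHIP \eqref{CHIPDENSESETS} with $A={\cal T}_f$ to CHIP along every dense $A\subset{\cal T}_f$ using outer semicontinuity of $M$ together with the continuity of $t\mapsto\nabla_x f(t,\ox)$, and then invoke Theorem~\ref{firstresultCHIP}(i) with Remark~\ref{remark7.2} to drop the normal regularity requirement. Your write-up is, if anything, slightly more explicit than the paper's in justifying why $\ox$ remains a local minimizer of the reduced problem and why outer semicontinuity is inherited by the restriction.
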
\vspace*{-0.07in}
\begin{proof} $\;$ Without loss of generality, from now on we consider problem \eqref{IOPTIMIZATION.FUZZY.SIP.3} for $t\in{\cal T}_f$. Applying to this problem the results of Theorem~\ref{firstresultCHIP} and Remark~\ref{remark7.2}), observe that all the corresponding assumptions can be easily verified with the exception  of CHIP \eqref{CHIPDENSESETS} over the set ${\cal T}_f$. Thus it remains to check that the imposed CHIP assumption yields the fulfillment of CHIP \eqref{CHIPDENSESETS} for any dense subset $A\subset{\cal T}_f$. Indeed, it follows from
\cite[Exercise~6.7]{rw} that
\begin{equation*}
T\big(\bar{x};M(t)\big)=\big\{w\in\mathbb{R}^n\big|\;\big\langle\nabla_x f(t,\bar{x}),w\big\rangle\in T\big(f(t,\bar{x});\mathbb{R}_{-}\big)\big\},\quad t\in{\cal T}_f,
\end{equation*}
which implies in turn the representation
\begin{align*}
T\big(\bar{x},M(t)\big)=\big\{w\in\mathbb{R}^n\big|\;\big\langle\nabla_x f(t,\bar{x}),w\big\rangle\le 0\big\},
\end{align*}
and hence $\bigcap_{t\in{\cal T}_f}T(\bar{x};M(t))=\{w|\;\langle\nabla_x f(t,\bar{x}),w\rangle\le 0\;\mbox{ for all }\;t\in{\cal T}_f\}$. Taking now any dense set $A\subset{\cal T}_f$, we have that $\bigcap_{t\in A}M(t)=\bigcap_{t\in{\cal T}_f}M(t)$. Furthermore, the continuity of $t\mapsto\nabla_x f(t,\bar{x})$ ensures that
\begin{align*}
&T\Big(\bar{x};\bigcap_{t\in A} M(t)\Big)=T\Big(\bar{x};\bigcap_{t\in{\cal T}_f}M(t)\Big)=\bigcap_{t\in{\cal T}_f}T\big(\bar{x};M(t)\big)\\
&=\big\{w\in\mathbb{R}^n\big|\;\big\langle\nabla_x f(t,\bar{x}),w\big\rangle\le 0\;\mbox{ for all }\;t\in A\big\}=\bigcap_{t\in A}T\big(\bar{x};M(t)\big),
\end{align*}
which verifies the CHIP assumption of Theorem~\ref{firstresultCHIP}. Applying finally Theorem~\ref{firstresultCHIP} to \eqref{IOPTIMIZATION.FUZZY.SIP.3} and arguing as in the proof of Corollary~\ref{Theorem01:inequalitysystem}, we finish the verification of both assertions of the theorem.
\end{proof}\vspace*{0.05in}
	
To conclude, let us present a useful consequence of Theorem~\ref{theoremFinal}(i), where the  CHIP assumption is automatically satisfied.\vspace*{-0.05in}
\begin{corollary}[optimality conditions for semi-infinite programs with linear inequality constraints]\label{sip-lim} Let $\bar{x}$ be a local minimizer of the problem:
\begin{align*}
\begin{array}{c}
\mbox{minimize }\;h(x)\;\mbox{ subject to }\;\langle a(t),x\rangle\le b(t)\;\mbox{ for all }\;t\in{\cal T},
\end{array}
\end{align*}
where $a\colon{\cal T}\to\mathbb{R}^n$ and $b\colon{\cal T}\to\mathbb{R}$ are continuous functions with $a(t)\ne 0$ for all $t\in{\cal T}_f$ from Theorem~{\rm\ref{theoremFinal}} with $f(t,x):=\la a(t),x\ra-b(t)$. Assume that $\ox\in{\rm int}\bigcap_{t\in{\cal T}\backslash{\cal T}_f}\{x\in\mathbb{R}^n|\;\langle a(t),x\rangle<b(t)\}$. Then we have
\begin{align*}
0\in\nabla h(\bar{x})+{\rm cl}\Big(\int_{{\cal T}_f}{\rm cone}\big\{a(t)\big\}d\nu(t)\Big)\;\mbox{ for any }\;\nu\in\mathfrak{M}_{+}({\cal T}_f).
\end{align*}
\end{corollary}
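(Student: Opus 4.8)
The plan is to obtain Corollary~\ref{sip-lim} as the affine specialization of Theorem~\ref{theoremFinal}, so that the work reduces to checking its hypotheses in the present setting and then reading off the conclusion. Set $f(t,x):=\langle a(t),x\rangle-b(t)$, so that $M(t)=\{x\mid f(t,x)\le 0\}$ and $\nabla_x f(t,x)\equiv a(t)$. The regularity requirements on the data are immediate: $f$ is affine in $x$, hence continuously differentiable in $x$, and it is continuous in $t$ because $a(\cdot)$ and $b(\cdot)$ are; the map $t\mapsto\nabla_x f(t,\bar x)=a(t)$ is continuous on $\mathcal{T}_f$, and $a(t)\ne 0$ for $t\in\mathcal{T}_f$ by assumption. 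The interior hypothesis of Theorem~\ref{theoremFinal}, namely $\bar x\in{\rm int}\big(\bigcap_{t\in\mathcal{T}\setminus\mathcal{T}_f}\{x\mid f(t,x)<0\}\big)$, is exactly the stated assumption $\bar x\in{\rm int}\bigcap_{t\in\mathcal{T}\setminus\mathcal{T}_f}\{x\mid\langle a(t),x\rangle<b(t)\}$.

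The only nontrivial point is the CHIP assumption \eqref{CHIPDENSESETS} with $A:=\mathcal{T}_f$. First I would restrict attention, as in the proof of Theorem~\ref{theoremFinal}, to the index set $\mathcal{T}_f$, on which every constraint is active: $f(t,\bar x)=0$ for all $t\in\mathcal{T}_f$. Then $M(t)-\bar x=\{w\mid\langle a(t),w\rangle\le 0\}$ is a closed half-space through the origin, in particular a cone, so $T(\bar x;M(t))=M(t)-\bar x$ and the tangential stability inclusion \eqref{stabeq} holds for $M(t)$ at $\bar x$ with respect to $U=\mathbb{R}^n$. Hence the family $\{M(t)\}_{t\in\mathcal{T}_f}$ is uniformly tangentially stable at $\bar x$, and Lemma~\ref{lemmaCHIP} (equivalently, Corollary~\ref{PropositionCHIP} applied with the index set $\mathcal{T}_f$, for which $\mathcal{T}_f\setminus\mathcal{T}_f=\emptyset$ and the interior condition is vacuous) yields
\begin{equation*}
T\Big(\bar x;\bigcap_{t\in\mathcal{T}_f}M(t)\Big)=\bigcap_{t\in\mathcal{T}_f}T\big(\bar x;M(t)\big),
\end{equation*}
which is precisely \eqref{CHIPDENSESETS} with $A=\mathcal{T}_f$.

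With all hypotheses verified, I would invoke Theorem~\ref{theoremFinal} (which already incorporates Remark~\ref{remark7.2}, so no normal regularity is needed once $h$ is Fr\'echet differentiable) to obtain, for every $\nu\in\mathfrak{M}_+(\mathcal{T}_f)$,
\begin{equation*}
0\in\nabla h(\bar x)+{\rm cl}\Big(\int_{\mathcal{T}_f}{\rm cone}\big\{\nabla_x f(t,\bar x)\big\}d\nu(t)\Big).
\end{equation*}
Substituting $\nabla_x f(t,\bar x)=a(t)$ gives the asserted inclusion. I expect no serious obstacle: the whole argument is a matter of matching notation and exploiting that active affine inequality constraints have conic (half-space) tangent sets; the only place requiring a moment's care is confirming that passing to the index set $\mathcal{T}_f$ preserves the local minimality of $\bar x$ and does not change the feasible set near $\bar x$, which is handled exactly as in the reduction step of the proof of Theorem~\ref{theoremFinal}.
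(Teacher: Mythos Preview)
Your proposal is correct and follows essentially the same route as the paper: apply Theorem~\ref{theoremFinal} after verifying its CHIP hypothesis via Corollary~\ref{PropositionCHIP} (or equivalently Lemma~\ref{lemmaCHIP}), and use the Fr\'echet differentiability of $h$ in lieu of normal regularity as in Remark~\ref{remark7.2}. The paper's proof is just a one-line pointer to these same ingredients; your write-up simply unpacks the details.
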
\vspace*{-0.05in}
\begin{proof} $\;$ It follows directly from Theorem~\ref{theoremFinal}(i), where the normal regularity assumption is replaced by the Fr\'echet differentiability of $h$ by Remark~\ref{diss}(d), and  the CHIP assumption holds by Corollary~\ref{PropositionCHIP}.
\end{proof}\vspace*{0.05in}

We refer the reader to \cite{clmp2,gl,lnp,m18,mn,mn1,mn2} and the bibliographies therein for various qualification conditions that lead to the possibility to avoid the closure operation in necessary optimality conditions of type \eqref{inclusiongradient} for particular forms of semi-infinite programs with inequality constraints.\vspace*{0.1in}

{\bf Acknowledgements}. The authors are very grateful to three anonymous referees for their numerous comments and remarks that helped us to essentially improve the original presentation.  
\vspace*{-0.2in}


\begin{thebibliography}{99}\vspace*{-0.1in}

\bibitem{bbl} Bauschke, H.H., Borwein, J.M., Li, W.: Strong conical hull intersection property, bounded linear regularity, Jameson's property (G), and error bounds in convex optimization.
Math. Program. 86, 135--160 (1999)

\bibitem{clmp1} C\'anovas, M.J., L\'opez, M.A., Mordukhovich, B.S., Parra, J.: Variational analysis in semi-infinite and
infinite programming, I: stability of linear inequality systems of feasiable solutions. SIAM J. Optim. 20, 1504--1526 (2009)

\bibitem{clmp2} C\'anovas, M.J., L\'opez, M.A., Mordukhovich, B.S., Parra, J.: Variational analysis in semi-infinite and
infinite programming, II: necessary optimality conditions. SIAM J. Optim. 20, 2788--2806 (2010)

\bibitem{chp} Correa, R., Hantoute, A., P\'erez-Aros, P.: Subdifferential Calculus Rules for Possibly Nonconvex Integral Functions. SIAM J. Control Optim. 58, 462--484 (2020)

\bibitem{et} Ernst, E., Th\'era, M.: Boundary half-strips and the strong CHIP. SIAM. J. Optim. 18, 834--852 (2007)

\bibitem{gl} Goberna, M.A., L\'{o}pez, M.A.: Post-Optimal Analysis in Linear Semi-Infinite Optimization. Springer, New York (2014)

\bibitem{hhp} Hantoute, A., Henrion, R., P\'erez-Aros, P.: Subdifferential characterization of probability functions under
Gaussian distribution. Math. Program. B (2018). https://doi.org/10.1007/s10107-018-1237-9

	\bibitem{Generaltopology}  Kelley, J.L.:  General topology.  Graduate Texts in Mathematics, No. 27. Springer-Verlag, New York-Berlin. xiv+298 pp. (1975)
	
\bibitem{kl} Kruger, A.Y., L\'opez, M.A.: Stationarity and regularity of infinite collections of sets. J. Optim. Theory Appl. 154,
339--369 (2012)

\bibitem{km} Kruger, A.Y., Mordukhovich, B.S.: Extremal points of sets and the Euiler equation in nomsmooth optimization problems.
Dokl. Akad Nauk BSSR 24, 684--687 (1980) (in Russian)

\bibitem{lnp} Li, C., Ng, K.F., Pong, T.K.: The SECQ, linear regularity, and the strong CHIP for an infinite system of
closed convex sets in normed linear spaces. SIAM. J. Optim. 18, 643--665 (2008)

\bibitem{m94} Mordukhovich, B.S.: Generalized differential calculus for nonsmooth and set-valued mappings. J. Math. Anal. Appl 183,
250--288 (1994)

\bibitem{m06} Mordukhovich, B.S.: Variational Analysis and Generalized Differentiation, I: Basic Theory, II: Applications. Springer,
Berlin (2006)

\bibitem{m18} Mordukhovich, B.S.: Variational Analysis and Applications. Springer, Cham, Switzerland (2018)

\bibitem{mn} Mordukhovich, B.S., Nghia, T.T.A.: Constraint qualifications and optimality conditions in nonlinear semi-infinite and infinite programming. Math. Program. 139, 271--300 (2013)

\bibitem{mn1} Mordukhovich, B.S., Nghia, T.T.A.: Subdifferentials of nonconvex supremum functions and their applications
to semi-infinite and infinite programs with Lipschitzian data. SIAM J. Optim. 23, 406--431 (2013)

\bibitem{mn2} Mordukhovich, B.S., Nghia, T.T.A.: Nonsmooth cone-constrained optimization with applications to semi-infinite programming.
Math. Oper. Res. 39, 301--337 (2014)

\bibitem{mp1} Mordukhovich, B.S., Phan, H.M.: Tangential extremal principle for finite and infnite systems, I: basic theory. Math.
Program. 136, 31--63 (2012)

\bibitem{mp2} Mordukhovich, B.S., Phan, H.M.: Tangential extremal principle for finite and infnite systems, II: applications to
semi-infnite and multiobjective optimization. Math. Program. 136, 31--63 (2012)

\bibitem{ppa1} P\'erez-Aros, P.:  Subdifferential formulae for the supremum of an arbitrary family of functions. SIAM J. Optim. 29, 1714--1743
\bibitem{rw} Rockafellar, R.T., Wets, R. J-B.: Variational Analysis. Springer, Berlin (1998)


\bibitem{sdr} Shapiro, A., Dentcheva, D., Ruszczy\'nski, A.: Lectures on Stochastic Programming, 2nd edition. SIAM Publications,
Philadelphia, PA (2014)

\end{thebibliography}
\end{document}